\theoremstyle{plain}
\newtheorem{prop}{Proposition}[section]
\newtheorem{conj}[prop]{Conjecture}
\newtheorem{thm}[prop]{Theorem}
\newtheorem{lem}[prop]{Lemma}
\newtheorem*{lem*}{Lemma}
\newtheorem{cor}[prop]{Corollary}
\newtheorem*{cor*}{Corollary}           
\newtheorem*{conv*}{Convention}
\theoremstyle{remark}
\newtheorem{defn}[prop]{Definition}
\newenvironment{rem}
  {\pushQED{\qed}\remark}
  {\popQED\endremark}
\theoremstyle{plain}
\newtheorem{Thm}{Theorem}
\newcommand{\floor}[1]{\left\lfloor#1\right\rfloor}
\newcommand{\Hom}{\operatorname{Hom}}
\def\bZ{{\mathbb{Z}}}
\def\bC{{\mathbb{C}}}
\definecolor{dark-red}{rgb}{0.7,0.25,0.25}
\definecolor{dark-blue}{rgb}{0.15,0.15,0.55}
\definecolor{medium-blue}{rgb}{0,0,0.65}
\newcommand{\up}[3][top>]{
\draw[very thick, #1] 
(#2em,#3em)--+(0,2em);
}
\newcommand{\down}[3][bottom<]{
\draw[very thick, #1] 
(#2em,#3em)--+(0,2em);
}
\newcommand{\vseg}[3][]{
\draw[very thick, #1] 
(#2em,#3em)--+(0,2em);
}
\newcommand{\Up}[3][upper>]{
\draw[very thick, doubled, #1] 
(#2em,#3em)--+(0,2em);
}
\newcommand{\Down}[3][mid<]{
\draw[very thick, doubled, #1] 
(#2em,#3em)--+(0,2em);
}
\newcommand{\Vseg}[3][]{
\draw[very thick, doubled,#1] 
(#2em,#3em)--+(0,2em);
}
\newcommand{\lcup}[2]{
\draw[very thick, lower<] 
(#1em,#2em) arc (180:270:1em) node[coordinate] (c) {}; 
\draw[very thick] (c) arc (270:360:1em);
}
\newcommand{\rcup}[2]{
\draw[very thick, lower<] 
(#1em,#2em) ++ (2em,0) arc (0:-90:1em) 
node[coordinate] (c) {}; 
\draw[very thick] 
(c) arc (270:180:1em);
}
\newcommand{\ncup}[2]{
\draw[very thick] 
(#1em,#2em) arc (180:270:1em) node[coordinate] (c) {}; 
\draw[very thick] (c) arc (270:360:1em);
}
\newcommand{\Rcup}[2]{
\draw[very thick, doubled, lower<] 
(#1em,#2em) ++ (2em,0) arc (0:-90:1em) 
node[coordinate] (c) {}; 
\draw[very thick, doubled] 
(c) arc (270:180:1em);
}
\newcommand{\Ncup}[2]{
\draw[very thick, doubled] 
(#1em,#2em) arc (180:270:1em) node[coordinate] (c) {}; 
\draw[very thick, doubled] (c) arc (270:360:1em);
}
\newcommand{\widecup}[3]{
\draw[very thick] 
(#1em,#2em) arc (180:360:#3em) node[coordinate] (c) {}; 
}
\newcommand{\widercup}[3]{
\draw[very thick,decoration={markings, mark=at position .99 with {\arrow{stealth}}}, postaction={decorate}] 
(#1em,#2em) arc (180:360:#3em) node[coordinate] (c) {}; 
}
\newcommand{\widelcup}[3]{
	\draw[very thick,decoration={markings, mark=at position .99 with {\arrow{stealth}}}, postaction={decorate}] 
	(#1em,#2em) arc (360:180:#3em) node[coordinate] (c) {}; 
}
\newcommand{\Widecup}[3]{
\draw[very thick, doubled] 
(#1em,#2em) arc (180:270:#3em) node[coordinate] (c) {}; 
\draw[very thick, doubled] (c) arc (270:360:#3em);
}
\newcommand{\lcap}[2]{
\draw[very thick, lower<] 
(#1em,#2em) arc (180:90:1em)
node[coordinate] (c) {};
\draw[very thick]
(c) arc (90:0:1em); 
}
\newcommand{\rcap}[2]{
\draw[very thick, lower<]
(#1em,#2em)++(2em,0em) arc (0:90:1em) 
node[coordinate] (c) {}; 
\draw[very thick] (c) arc (90:180:1em);
}
\newcommand{\ncap}[2]{
\draw[very thick]
(#1em,#2em)++(2em,0em) arc (0:90:1em) 
node[coordinate] (c) {}; 
\draw[very thick] (c) arc (90:180:1em);
}
\newcommand{\Lcap}[2]{
\draw[very thick, doubled, lower<] 
(#1em,#2em) arc (180:90:1em)
node[coordinate] (c) {};
\draw[very thick, doubled]
(c) arc (90:0:1em); 
}
\newcommand{\Rcap}[2]{
\draw[very thick, doubled, lower<]
(#1em,#2em)++(2em,0em) arc (0:90:1em) 
node[coordinate] (c) {}; 
\draw[very thick, doubled] (c) arc (90:180:1em);
}
\newcommand{\Ncap}[2]{
\draw[very thick, doubled]
(#1em,#2em)++(2em,0em) arc (0:90:1em) 
node[coordinate] (c) {}; 
\draw[very thick, doubled] (c) arc (90:180:1em);
}
\newcommand{\widecap}[3]{
\draw[very thick]
(#1em,#2em)++(#3em,0em)++(#3em,0em) arc (0:90:#3em) 
node[coordinate] (c) {}; 
\draw[very thick] (c) arc (90:180:#3em);
}
\newcommand{\widercap}[3]{
\draw[very thick,decoration={markings, mark=at position .99 with {\arrow{stealth}}}, postaction={decorate}] 
(#1em,#2em) arc (180:0:#3em) node[coordinate] (c) {};
}
\newcommand{\widelcap}[3]{
\draw[very thick,decoration={markings, mark=at position .99 with {\arrow{stealth}}}, postaction={decorate}] 
(#1em,#2em) arc (0:180:#3em) node[coordinate] (c) {};
}
\newcommand{\Widecap}[3]{
\draw[very thick, doubled]
(#1em,#2em)++(#3em,0em)++(#3em,0em) arc (0:90:#3em) 
node[coordinate] (c) {}; 
\draw[very thick, doubled] (c) arc (90:180:#3em);
}
\newcommand{\Widercap}[3]{
\draw[very thick, doubled, bottom<]
(#1em,#2em)++(#3em,0em)++(#3em,0em) arc (0:90:#3em) 
node[coordinate] (c) {}; 
\draw[very thick, doubled] (c) arc (90:180:#3em);
}
\newcommand{\Widelcap}[3]{
	\draw[very thick, doubled, bottom<]
	(#1em,#2em) arc (180:90:#3em) 
	node[coordinate] (c) {}; 
	\draw[very thick, doubled] (c) arc (90:0:#3em);
}
\newcommand{\Widercup}[3]{
	\draw[very thick, doubled, bottom<]
	(#1em,#2em)++(#3em,0em)++(#3em,0em) arc (0:-90:#3em) 
	node[coordinate] (c) {}; 
	\draw[very thick, doubled] (c) arc (-90:-180:#3em);
}
\newcommand{\uucap}[2]{
\draw[very thick, lower>] 
(#1em,#2em) arc (180:90:1em) 
node[coordinate] (c) {}; 

\draw[very thick, lower>] 
(#1em,#2em)++(2em,0em) arc (0:90:1em);

\draw[very thick, doubled] 
(c) -- +(0,1em);
}
\newcommand{\uucup}[2]{
\draw[very thick, bottom<] 
(#1em,#2em) arc (-180:-90:1em)
node[coordinate] (c) {}; 
\draw[very thick, doubled] (c) -- +(0,-1em);
\draw[very thick, bottom<] 
(#1em,#2em)++(2em,0em) arc (0:-90:1em);
} 
\newcommand{\ddcap}[2]{
\draw[very thick, lower<] 
(#1em,#2em) arc (180:90:1em) 
node[coordinate] (c) {}; 

\draw[very thick, lower<] 
(#1em,#2em)++(2em,0) arc (0:90:1em);

\draw[very thick, doubled] 
(c) -- +(0,1em);
}
\newcommand{\ddcup}[2]{
\draw[very thick, lower>] 
(#1em,#2em) arc (-180:-90:1em)
node[coordinate] (c) {}; 

\draw[very thick, lower>] 
(#1em,#2em)++(2em,0) arc (0:-90:1em);

\draw[very thick, doubled] (c) -- +(0,-1em);
}
\newcommand{\nncap}[2]{
\draw[very thick] 
(#1em,#2em) arc (180:90:1em) 
node[coordinate] (c) {}; 

\draw[very thick] 
(#1em,#2em)++(2em,0) arc (0:90:1em);

\draw[very thick, doubled] 
(c) -- +(0,1em);
}
\newcommand{\cupcap}[2]{
\draw[very thick] 
(#1em,#2em) arc (180:90:1em) 
node[coordinate] (c) {}; 

\draw[very thick] 
(#1em,#2em)++(2em,0em) arc (0:90:1em);

\draw[very thick, doubled] 
(c) -- +(0,2em);

\draw[very thick] 
(#1em,#2em) + (0,4em) arc (-180:0:1em);
} 
\newcommand{\widecupcap}[3]{
\draw[very thick] 
(#1em,#2em) arc (180:90:#3em) 
node[coordinate] (c) {}; 

\draw[very thick] 
(#1+#3em+#3em,#2em) arc (0:90:#3em);

\draw[very thick, doubled] 
(c) -- +(0,2em);

\draw[very thick] 
(#1em,#2em) + (0,#3em+#3em+2em) arc (-180:0:#3em);
} 
\newcommand{\lefte}[2]{
\draw[very thick] 
(#1em,#2em) -- ++(0,2em);
\draw[very thick] 
(#1em,#2em) ++(2em,0em) -- ++(0,2em);

\draw[very thick,doubled, mid<] 
(#1em,#2em) ++(0em,1em) -- ++(2em,0em);
} 
\newcommand{\righte}[2]{
\draw[very thick] 
(#1em,#2em) -- ++(0,2em);
\draw[very thick] 
(#1em,#2em) ++(2em,0em) -- ++(0,2em);
\draw[very thick,doubled, mid<] 
(#1em,#2em) ++(2em,1em) -- ++(-2em,0em);
} 
\newcommand{\drawsquare}[3][\phantom{0}]{
\draw  (#2em, #3em)
node[draw,rectangle,inner xsep=1.2ex,inner ysep=1.2ex,fill=white,minimum size=2mm, very thick]  {${#1}$};
}
\newcommand{\drawsquaretext}[3][\phantom{0}]{
	\draw  (#2em, #3em)
	node[draw,rectangle,inner xsep=1.2ex,inner ysep=1.2ex,fill=white,minimum size=2mm, very thick]  {{#1}};
}
\newcommand{\drawP}[3]
{
\draw  (#1em, #2em)
node[draw,circle,inner sep=0.5ex,fill=white,minimum size=.45mm, very thick] {$#3$};
}
 \newcommand{\tikzc}[1]{
 \centering{\xy (0,0)*{\tikz{#1}}; \endxy}}
\tikzstyle{doubled}=[thin,double]
\tikzstyle{mid>}=[decoration={markings, mark=at position 0.5 with {\arrow{stealth}}}, postaction={decorate}]
\tikzstyle{mid<}=[decoration={markings, mark=at position 0.5 with {\arrow{stealth reversed}}}, postaction={decorate}]
\tikzstyle{top>}=[decoration={markings, mark=at position 1. with {\arrow{stealth}}}, postaction={decorate}]
\tikzstyle{top<}=[decoration={markings, mark=at position 1. with {\arrow{stealth reversed}}}, postaction={decorate}]
\tikzstyle{bottom<}=[decoration={markings, mark=at position 0.1 with {\arrow{stealth reversed}}}, postaction={decorate}]
\tikzstyle{upper>}=[decoration={markings, mark=at position 0.8 with {\arrow{stealth}}}, postaction={decorate}]
\tikzstyle{upper<}=[decoration={markings, mark=at position 0.8 with {\arrow{stealth reversed}}}, postaction={decorate}]
\tikzstyle{lower>}=[decoration={markings, mark=at position 0.2 with {\arrow{stealth}}}, postaction={decorate}]
\tikzstyle{lower<}=[decoration={markings, mark=at position 0.2 with {\arrow{stealth reversed}}}, postaction={decorate}]
\tikzstyle{dot}=[decoration={markings, mark=at position 0.5 with {\node[fill,circle,inner sep=0pt,minimum size=2mm] {};}}, postaction={decorate}]
\tikzstyle{box}=[decoration={markings, mark=at position 0.5 with {\node[draw,rectangle,inner xsep=1.2ex,inner ysep=.6ex,fill=white,minimum size=2mm, very thick] {};}}, postaction={decorate}]
\tikzstyle{lowerdot}=[decoration={markings, mark=at position 0.25 with {\node[fill,circle,inner sep=0pt,minimum size=2mm] {};}}, postaction={decorate}]
\tikzstyle{upperdot}=[decoration={markings, mark=at position 0.75 with {\node[fill,circle,inner sep=0pt,minimum size=2mm] {};}}, postaction={decorate}]
\tikzstyle{odot}=[decoration={markings, mark=at position 0.5 with {\node[draw,circle,inner sep=0pt,minimum size=2mm,fill=white,thick] {};}}, postaction={decorate}]
\tikzstyle{lowerodot}=[decoration={markings, mark=at position 0.25 with {\node[draw,circle,inner sep=0pt,minimum size=2mm,fill=white,thick] {};}}, postaction={decorate}]
\tikzstyle{upperodot}=[decoration={markings, mark=at position 0.75 with {\node[draw,circle,inner sep=0pt,minimum size=2mm,fill=white,thick] {};}}, postaction={decorate}]
\tikzstyle{stardot}=[decoration={markings, mark=at position 0.5 with {\node[fill,star, star points=6,star point ratio=0.5, inner sep=0pt,minimum size=1.5mm] {};}}, postaction={decorate}]
\tikzstyle{lowerstardot}=[decoration={markings, mark=at position 0.25 with {\node[fill,star, star points=6,star point ratio=0.5, inner sep=0pt,minimum size=1.5mm] {};}}, postaction={decorate}]
\tikzstyle{upperstardot}=[decoration={markings, mark=at position 0.75 with {\node[fill,star, star points=6,star point ratio=0.5, inner sep=0pt,minimum size=1.5mm] {};}}, postaction={decorate}]
\tikzstyle{ostardot}=[decoration={markings, mark=at position 0.5 with {\node[star, star points=6,star point ratio=0.5, inner sep=0pt,minimum size=1.7mm, fill,thick] {};
\tikzstyle{lowerostardot}=[decoration={markings, mark=at position 0.25 with {\node[fill=white,star, star points=6,star point ratio=0.5, inner sep=0pt,minimum size=1.5mm] {};}}, postaction={decorate}]
\tikzstyle{upperostardot}=[decoration={markings, mark=at position 0.75 with {\node[fill=white,star, star points=6,star point ratio=0.5, inner sep=0pt,minimum size=1.5mm] {};}}, postaction={decorate}]
\newcommand{\uWeyl}{\operatorname{\mathbf{W}}}
\newcommand{\aWeyl}{\operatorname{\mathbf W}^\infty}
\newcommand{\Heis}{\mathcal{H}}
\newcommand{\Wact}{\operatorname{\mathbf{2Weyl}}}
\newcommand{\Id}{\operatorname{id}}
\newcommand{\Ind}{\operatorname{Ind}}
\newcommand{\Res}{\operatorname{Res}}
\newcommand{\End}{\operatorname{End}}
\newcommand{\EndCat}{\operatorname{End}_{\mathbf{Cat}}}
\newcommand{\eone}{\End_{\Wact}(\mathbbm{1}_n)}
\newcommand{\boxalg}{\mathcal{B}}
\newcommand{\boxy}[1]{b_{#1}}
\newcommand{\bdesc}[2]{\boxy{[#1;#2]}}
\newcommand{\projalg}{\mathcal{C}}
\newcommand{\TL}{\operatorname{TL}}
\newcommand{\TLn}{\TL_n}
\newcommand{\ptr}{\operatorname{ptr}}
\newcommand{\tleq}[1][\bullet]{\overset{#1}{=}}
\renewcommand{\mod}[3]{{}_{#1}(#2)_{#3}}
\newcommand{\T}{\mathbf{TL}}
\newcommand{\KT}{\mathcal{T\!L}}
\newcommand*\circled[1]{\tikz[baseline=(char.base)]{
            \node[shape=circle,draw,very thick,inner sep=2pt] (char) {${#1}$};}}
\newcommand{\kimage}[1][k]{C_{#1}}
\newcommand{\ckme}{e'}
\newcommand{\subalg}{\mathcal{K}}
\newcommand{\subjectclass}[1]{
\renewcommand{\thefootnote}{} 
\footnote{2020\ \textit{Mathematics Subject Classification:} \textrm{#1}}
\addtocounter{footnote}{-1}
\renewcommand{\thefootnote}{\arabic{footnote}} }
\begin{document}

\title{The Temperley-Lieb tower and the Weyl algebra}

\subjectclass{16D90}

\date{\today}

\author{Matthew Harper}
\address{Department of Mathematics \\
Michigan State University \\
619 Red Cedar Rd.\\
East Lansing, MI 48824\\
USA
}
\email{mrhmath@proton.me}

\author{Peter Samuelson}
\address{Department of Mathematics \\
University of California, Riverside \\
900 University Ave.\\
Riverside, CA 92521\\
USA
}
\email{psamuels@ucr.edu}

\begin{abstract}
We define a monoidal category $\uWeyl$ and a closely related 2-category $\Wact$  using diagrammatic methods. We show that $\Wact$ acts on the category $\T :=\bigoplus_n \TLn\mathrm{-mod}$ of modules over Temperley-Lieb algebras, with its generating 1-morphisms acting by induction and restriction. The Grothendieck groups of $\uWeyl$ and a third category we define $\aWeyl$ are closely related to the Weyl algebra. We formulate a sense in which $K_0(\aWeyl)$ acts asymptotically on $K_0(\T)$.
\end{abstract}

\maketitle

\tableofcontents
\section{Introduction}
A sequence $\{A_n\mid n \in \mathbb{N}\}$ of algebras is called a \emph{tower of algebras} if it is equipped with algebra homomorphisms $\iota_{m,n}: A_m \otimes A_n \to A_{m+n}$ satisfying an obvious associativity condition. Towers of algebras arise quite naturally in representation theory -- the prototypical example is the sequence $\Bbbk S_n$ of group algebras of the symmetric groups. The representation theory of algebras in a tower is most naturally studied simultaneously; more precisely, the category $\mathcal A := \bigoplus_n A_n\mathrm{-mod}$ has natural endofunctors induction ($\Ind$) and restriction ($\Res$), both along the maps $\iota:A_n \to A_{n+1}$. The monoidal subcategory of $\EndCat(\mathcal{A})$ generated by induction and restriction tends to be  interesting and deserves careful study. 

In the example of the tower of symmetric groups, Khovanov \cite{Kho14} used a diagrammatic construction to define the \emph{Heisenberg category} $\mathcal{H}^K$ and proved that there is a monoidal functor $\mathcal{H}^K \to \EndCat(\bigoplus_n S_n\mathrm{-mod})$. An action in this context maps an object $x\in \mathcal{H}^K$ to a functor between categories of symmetric group representations. He proved that the Heisenberg algebra injects into the Grothendieck group of $\mathcal{H}^K$ and conjectured they were isomorphic -- this conjecture was proved recently in \cite{BSW23}. 

Other invariants of the Heisenberg category have also been attracting
recent attention. First, recall the \emph{trace} of a category $\mathcal{C}$ is defined as 
\[
\mathrm{Tr}(\mathcal C) := \frac{\mathbbm{k}\{\oplus_X \mathrm{End}(X) \mid X \in Ob(\mathcal{C})\}}{\{f\circ g - g \circ f\}}\,.
\]
If $\mathcal C$ is monoidal, then its trace is an algebra, and there is a \emph{Chern character} (algebra) map $K_0(\mathcal C) \to \mathrm{Tr}(\mathcal C)$ sending $[X] \mapsto \mathrm{Id}_X$. In \cite{CLLS18}, Cautis, Lauda, Licata, and Sussan showed the trace of the Heisenberg category is the $\mathcal W_{1+\infty}$ algebra. This provides an example of a category where the Chern character map is not an isomorphism; in this example, the trace is a much bigger, more interesting algebra. 

Khovanov's Heisenberg category was deformed to the quantum Heisenberg category $\mathcal{H}_q^{LS}$ using Hecke algebras by Licata and Savage in \cite{LS13} and this category was generalized further to $\mathcal{H}_{q,k}^{BSW}$ by Brundan, Savage, and Webster in \cite{BSW20}. The traces of these categories were computed in \cite{CLLSS18} and \cite{MS22}, and in both cases they are closely related to the \emph{elliptic Hall algebra} of Burban and Schiffmann \cite{BS12}. 
Finally, in \cite{BSW20a} Brundan, Savage, and Webster showed that the Heisenberg category $\mathcal{H}_q^{BSW}$ can be used to construct 2-representations of Kac-Moody 2-categories, which was an a-priori more difficult problem.

This history of interesting results involving Heisenberg categories motivates the goal of the present paper:  study symmetries of the representation category of the tower of Temperley-Lieb algebras $\TL_n$. Some progress towards this goal was made by Quinn in \cite{Quinn}, who gave a partial diagrammatic description of a monoidal category which acts on
$\T := \bigoplus_n \TL_n\mathrm{-mod}$.

It turns out that the construction of a category acting on $\T$ is more subtle than one might expect from the Heisenberg category (for reasons we discuss later). Our construction actually involves three categories: a ``universal'' monoidal category $\uWeyl$, a 2-category $\Wact$, and an ``asymptotic'' monoidal category $\aWeyl$. The universal category is used to construct the other two categories by imposing different sets of relations on endomorphisms of the tensor unit $\mathbbm{1}$. In other words, there are functors from $\uWeyl$ to $\Hom_{\Wact}(n,m)$ and $\aWeyl$ which quotient out certain relations on morphisms. The resulting 2-category acts on $\T$ with its generating 1-morphisms acting by induction and restriction. We expect that $\aWeyl$ acts 
``asymptotically'' on $\T$, but we leave the details of this to future work. In Section \ref{sec:kzero}, we discuss an asymptotic action of this category under the $K_0$ functor.

Let $\uWeyl'$ be the additive $\mathbb{C}(q)$-linear strict monoidal category generated by the objects $Q_-$ and $Q_+$. Morphisms in $\uWeyl'$ are generated by oriented cups and caps, a disoriented cupcap, and endomorphisms of $\mathbbm{1}$ given by integer-labeled boxes. Relations in this category are given in \eqref{eq:AlternatingCupCaps}-\eqref{eq:MoveBubbles}. 

A key structural difference between the Heisenberg and Weyl categories is the existence of idempotent endomorphisms of the monoidal unit of $\uWeyl'$ called ``idempotent bubbles''. The image of such an idempotent is a subobject of $\mathbbm{1}$ in the Karoubi completion -- the universal Weyl category -- denoted $\uWeyl$. These bubbles are expressible in terms of the generating morphisms in $\uWeyl$, but there are no analogous morphisms in the present definition(s) of the Heisenberg category. 

\begin{Thm}[Proposition \ref{prop:WeylIso}]
	There is an isomorphism in $\uWeyl$:
	\begin{align*}
		Q_-\otimes Q_+\cong \left( Q_+\otimes Q_- \right) \oplus C_0\,.
	\end{align*}
\end{Thm}
This relation motivates the name for our category, since it is very closely related to the relation $xy = yx + 1$ in the Weyl algebra.

\begin{rem}
	The class of the object $C_0$ is an idempotent $[C_0]\neq[\mathbbm{1}]$ in the Grothendieck group of $\uWeyl$. However, $[C_0]$ does \emph{not} commute with $[Q_+]$ or $[Q_-]$. For example, see equation \eqref{eq:XC}.
\end{rem}  

\begin{rem}
    Based on the results of the present paper, we expect there exists an extension of the Heisenberg category (or, 2-category) that contains idempotent bubbles that act by projection onto isotypic components -- finding the correct definition of this category seems to be an interesting but challenging question that we hope to return to in the future. If we identify $S_n\mathrm{-mod}$ with $K^{{\bC^\times \times \bC^\times}}(\mathrm{Hilb}_n(\bC^2))$, then we expect these idempotent bubbles should act geometrically as restriction of sheaves to fixed points. {However, it is not clear whether there is a direct relation between the Heisenberg category and any of the Weyl categories introduced here -- Remark \ref{rem:fromheis} shows that the standard surjection from the Hecke algebra to the Temperley-Lieb algebra does not extend to a functor from the Heisenberg category to $\uWeyl$.}
\end{rem}

Another difference between the Heisenberg category and the category in our construction is the necessity to use a 2-categorical approach to define an action on a category of modules. In particular, the relations that naturally appear between morphisms in $\uWeyl$ have constants that depend on $n$ (i.e. they depend on which summand of $\T = \bigoplus_n \TLn\mathrm{-mod}$ the morphisms are acting on). These constants are encoded into the integer-labeled boxes and 
act as rational functions whose denominators vanish for certain values of $n$, which means the action of an $m$-labeled box is only defined when $n$ is large enough.
We therefore construct a 2-category $\Wact$ before considering an action on $\T$. Each morphism category in $\Wact$ is defined as a quotient of $\uWeyl$. Our first main result is the construction of this 2-category $\Wact$ which acts on $\T$ via induction and restriction.

\begin{Thm}[{Theorem \ref{thm:rep}}]	There is a well-defined functor $\rho:\Wact\to\End(\T)$ which maps $Q_-$ to restriction, $Q_+$ to induction, and $C_k$ to the projection onto the  isotypic simple component $W^n_{n-2k}$ in $\TLn\mathrm{-mod}$ for each $n$.
\end{Thm}

Using this functor we describe spanning sets and bases of certain endomorphism spaces of 1-morphisms of $\Wact$. 

We also define a quotient of $\uWeyl$ called the \emph{asymptotic Weyl category} $\aWeyl$. We give a conjectural description of a subalgebra $K_0(\aWeyl)'\subset K_0(\aWeyl)$ generated by the classes of $Q_+, Q_-$, and the $C_k$  by defining an algebra $\subalg$ by generators and relations which surjects onto this subalgebra of $K_0(\aWeyl)'$ (see Proposition \ref{prop:K0surj} for a precise statement). 

The monoidal category $\aWeyl$ is called ``asymptotic'' because we show $\subalg$ ``acts asymptotically'' on $K_0(\T)$. Our notion of an \emph{aysmptotic representation} is defined as an increasingly-filtered algebra acting on a decreasingly-filtered vector space, which defines an action of an algebra element on the $n$-th filtered component as $n\to \infty$, see Definition \ref{defn:asympt}. This action is defined such that it respects composition in the algebra (see Propostion \ref{prop:asymptoticrep}). It is in this way that $\subalg$ ``acts'' on $K_0(\T)$ despite $\uWeyl$ and $\aWeyl$ not admitting actions  
on $\T$ in the usual sense. We expect that there is a categorification of this notion of asymptotic representation which lifts to $\aWeyl$ (or some subquotient category) acting asymptotically on $\T$.

There are a number of avenues we intend to pursue in future work. In particular, we would like to describe the Grothendieck category of the 2-category $\Wact$, which will require stronger basis theorems than we currently provide. We also plan to use skein-theoretic techniques and results to describe the trace of $\Wact$, which we expect to be closely related to Cherednik's \emph{double affine Hecke algebra} for $\mathfrak{sl}_2$. 

We hope to relate $\uWeyl$, $\aWeyl$, and $\Wact$ to other categories appearing in categorical representation theory. Since any object $V$ in a monoidal category generates a tower of algebras as $\End(V^{\otimes n})$, the asymptotic categorical representation theory of these may also be interesting.

An outline of the paper is as follows. We include a table of notation in Section \ref{sec:notation}. In Section \ref{sec:TL}, we provide background results about Temperley-Lieb algebras. We give a graphical description of $\uWeyl$ in Section \ref{sec:weyldef}, and prove some diagrammatic relations. In Section \ref{sec:Wacts} we construct the 2-category $\Wact$, and we prove it acts on $\T$ in Section \ref{sec:Wactsacts}. In Section \ref{sec:bases} we prove some basis theorems for morphisms spaces in $\uWeyl$ and $\Wact$. Finally, in Section \ref{sec:kzero} we give a conjectural description of the Grothendieck group of $\aWeyl$ and show that it acts asymptotically on $K_0(\T)$. The appendix contains a summary of some of Quinn's results from \cite{Quinn}.\\

\noindent \textbf{Acknowledgments:} The authors would like to thank Aaron Lauda, Tony Licata, Alistair Savage, and Ben Webster for conversations that were very helpful in the preparation of this paper. The authors would also like to thank an anonymous referee for helpful comments and suggestions. The second author would like to thank Lauda for a (still unanswered) question which partially motivated this work. The work of the second author is partially supported by a Simons Collaboration Grant.

\section{Table of notation}\label{sec:notation}
Here we include an index of notation, along with the section where the notation is introduced.
\begin{multicols}{2}
\noindent
    $\boxalg$ box algebra, Sec. \ref{ssec:boxalg}\\
    $\projalg$ algebra of orthogonal idempotents, Sec. \ref{ssec:endalg}\\
    $\projalg_{n}$ algebra of $\floor{n/2}$ orthogonal idempotents, Sec. \ref{ssec:endalg}\\
    $\projalg_{n,s}$, Sec. \ref{ssec:endalg}\\    
    $C_k$ image of idempotent bubble $\circled{k}\in\End(\mathbbm{1})$, Sec. \ref{sec:weyldef} \\
    $\cup_i$, $\cap_i$ cup and cap in $\TLn$, Sec. \ref{sec:TL} \\
    $e_{p,r}$ matrix element in $\TL_n\mathrm{-mod}$, Sec. \ref{ssec:RepTLn}\\
    $e_i$ idempotent $\TLn$ generator, Sec. \ref{sec:TL} \\
    $e_i'$ quasi-idempotent $\TLn$ generator, Sec. \ref{sec:TL} \\
    $\tleq[n]$ equality in $\Hom_{\End(\TL)}(n,-)$, Rem. \ref{rem:tleq}\\
    $\tleq$ equality in $\Hom_{\End(\TL)}(n,-)$ for $n\geq 0$, Rem. \ref{rem:tleq}\\
    $f^{(n)}$ Jones-Wenzl idempotent, Sec. \ref{sec:TL}\\
    $\subalg$, Sec. \ref{ssec:K0}\\
    $\nabla$ null object, Sec. \ref{sec:Wactsacts} \\
    $P^n$ paths in $\mathsf{A}_{n+1}$ graph, Sec. \ref{ssec:RepTLn}\\
    $P^n_m$ paths in $\mathsf{A}_{n+1}$ graph ending at $m$, Sec. \ref{ssec:RepTLn}\\
    $Q_+$, $Q_-$ tensor generators, Sec. \ref{sec:weyldef} \\
    $Q_\epsilon$ tensor product object, Sec. \ref{sec:weyldef}
    \\
    $Q_{\oplus}$, $Q_\ominus$ images of idempotent cupcaps, Rem. \ref{rem:DoubledObjects} \\
    $\rho$ representation $\Wact\to\End(\T)$, Sec. \ref{sec:Wactsacts}\\
    $\rho'$ pre-completion of $\rho$, Sec. \ref{sec:Wactsacts}\\
    $\TLn$ Temperley-Lieb algebra, Sec. \ref{sec:TL} \\
    $\T=\bigoplus_n \TLn\mathrm{-mod}$, Sec. \ref{sec:Wactsacts} \\
    $\mathcal{TL}=K_0(\TL)$, Sec. \ref{ssec:asymp} \\  
    $v_p$, $\check{v}_p$ vectors in $\TLn\mathrm{-mod}$ for $p\in P^n$, Appx. \ref{sec:app} \\
    $\uWeyl$ universal Weyl category, Sec. \ref{sec:weyldef} \\
    $\uWeyl'$ pre-completion of $\uWeyl$, Sec. \ref{sec:weyldef} \\
    $\Wact$ Weyl 2-category, Sec. \ref{sec:Wacts}
    \\
    $\Wact'$ pre-completion of $\Wact$, Sec. \ref{sec:Wacts}
    \\
    $\aWeyl$ asymptotic Weyl category, Sec. \ref{sec:kzero} \\
    $W^n_m$ irreducible representation of $\TL_n$, Sec. \ref{ssec:RepTLn}
\end{multicols}

\section{The Temperley-Lieb Algebra}\label{sec:TL}
Fix a formal parameter $q$ and ground field $\bC(q)$. Let $V$ be the 2-dimensional irreducible representation of $U=U_q(\mathfrak{sl}_2)$ and $W=V\wedge V$, which is a direct summand of $V^{\otimes 2}$. Let $\TLn=\operatorname{End}_U(V^{\otimes n})$ be the Temperley-Lieb algebra on $n\geq0$ strands. The algebra is generated by quasi-idempotents $\ckme_i$ for $i=1,\dots, n-1$, where
\begin{align}
     (\ckme_i)^2=(q+q^{-1})\ckme_i=[2]\ckme_i\,, && \ckme_i\ckme_{i\pm1}\ckme_i=\ckme_i\,,
\end{align}
and $[2]=q+q^{-1}$\,. More generally, we write $[n]$ for $\dfrac{q^n-q^{-n}}{q-q^{-1}}$\,. We denote the idempotent form of these generators by $e_i=\ckme_i/[2]$, which satisfy $e_i^2=e_i$. The Temperley-Lieb diagram for the generator $\ckme_i$ is the cupcap over strands $i$ and $i+1$. We use the convention that strand 1 is \emph{on the right} and strand $n$ is \emph{on the left}. For example, the element $e_1\in\TL_4$ is given in the diagram below.
\begin{align}
	\TL_4\ni \ckme_1=~\tikzc{
		\vseg{-2}{0}\vseg{-2}{2}
		\vseg{0}{0}\vseg{0}{2}
		\ncap{2}{0}\ncup{2}{4}
	}
\end{align}
The natural inclusion $\TL_n \hookrightarrow \TL_{n+1}$ is the $m=1$ specialization of the structure maps of the Temperley-Lieb tower of algebras, which are the following algebra maps:
\begin{align}
	\tikzc{
		\vseg{-1}{2}
		\vseg{0}{2}\vseg{1}{2}
		\vseg{-1}{0}
		\vseg{0}{0}\vseg{1}{0}
		\drawsquaretext[\phantom{f}$x$\phantom{f}]{0}{2}
	}~\mapsto~
	\tikzc{
		\vseg{-2}{0}\vseg{-2}{2}
		\vseg{-1}{2}
		\vseg{0}{2}\vseg{1}{2}
		\vseg{-1}{0}
		\vseg{0}{0}\vseg{1}{0}
		\drawsquaretext[\phantom{f}$x$\phantom{f}]{0}{2}
	}
\end{align}
The partial trace operation $\ptr_n:\TLn\to\TL_{n-1}$ is defined as the $\TL_{n-1}$-bimodule map
\begin{align}\label{def:ptr}
	\tikzc{
		\vseg{-1}{2}
		\vseg{0}{2}\vseg{1}{2}
		\vseg{-1}{0}
		\vseg{0}{0}\vseg{1}{0}
		\drawsquaretext[\phantom{f}$x$\phantom{f}]{0}{2}
	}~\mapsto~
	\tikzc{
		\vseg{1}{2}
		\vseg{0}{2}\vseg{-1}{1}
		\vseg{1}{0}
		\vseg{0}{0}
		\drawsquaretext[\phantom{f}$x$\phantom{f}]{0}{2}
		\ncap{-3}{3}\ncup{-3}{1}
		\vseg{-3}{1}
	}
\end{align}
and for $x\in\TLn\hookrightarrow\TL_{n+1}$, we have $e_{n}'xe_n'=e_n'\ptr_n(x)$.

We briefly recall the Jones-Wenzl idempotents. For each $n\geq0$, there exists a unique $f^{(n)}\in \TLn$ which satisfies the following properties
\begin{itemize}
	\item $f^{(n)}\neq 0$\,,
	\item $f^{(n)}f^{(n)}=f^{(n)}$\,,
	\item $e_if^{(n)}=f^{(n)}e_i=0$ for all $i\in\{1,\dots, n-1\}$\,.
\end{itemize}

Let $f^{(0)}$ denote the empty diagram and set $f^{(1)}$ to be a single strand. For $n\geq2$ the Jones-Wenzl idempotents are defined by the recursive relation 
\begin{align}
	f^{(n+1)}=f^{(n)}-\frac{[n]}{[n+1]}f^{(n)}e_nf^{(n)}
\end{align}
where $f^{(n)}\in \TL_{n+1}$ via the inclusion $\TLn\hookrightarrow \TL_{n+1}$. These idempotents also satisfy the relation
\begin{align}
	\ptr_{n}(f^{(n)})=\frac{[n+1]}{[n]}f^{(n-1)}\,.
\end{align} 

In some cases, it will be easier for us to use the notation $\cup_i$ and $\cap_{i}$ to describe Temperley-Lieb diagrams. For instance, we will consider the $\TL_{n+2}$-diagram $\cup_{n}f^{(n+1)}\cap_n$. Each of $\cup_i$ and $\cap_{i}$ indicates a cup or cap over strands $i$ and $i+1$. In this way, the first $n-1$ strands of $f^{(n+1)}$ are to the right of $\cup_{n}$ and the $n+1$-st strand of $f^{(n+1)}$ is to the left of $\cup_{n}$ (in the right to left labeling convention), and similarly for $\cap_n$. The expression $\cup_{n}f^{(n+1)}\cap_n$ is equal to $e_{n+2}e_{n+1}f^{(n+1)}e_{n+1}$ and may be drawn diagrammatically as
\begin{align}
	\cup_{n}f^{(n+1)}\cap_n
	=~
	\tikzc{
		\ncap{-1.5}{-4}\ncup{-1.5}{1.5}
		\vseg{-2}{-.5}\vseg{1}{-.5}\vseg{2}{-.5}
		\vseg{-2}{-2}\vseg{1}{-2}\vseg{2}{-2}
		\vseg{-2}{-4}\vseg{1}{-4}\vseg{2}{-4}
		\drawsquaretext[\phantom{x}$f^{(n+1)}$\phantom{x}]{0}{-1.25}
	}
\end{align}

\section{The Universal Weyl Category}\label{sec:weyldef}
\subsection{Definition} We define an additive $\mathbb{C}(q)$-linear strict monoidal category $\uWeyl'$ as follows. The set of objects is tensor generated by the objects $Q_+$ and $Q_-$. An object in $\uWeyl'$ is a finite direct sum of 
objects $Q_\epsilon=Q_{\epsilon_\ell}\otimes\dots\otimes Q_{\epsilon_1}$, where $\epsilon=\epsilon_\ell\cdots\epsilon_1\in\{+,-\}^\ell$. We denote the unit object by $\mathbbm{1}$, which corresponds to the empty sequence.

The space of morphisms $\Hom_{\uWeyl'}(Q_{\epsilon},Q_{\epsilon'})$ is the $\bC(q)$-module generated by diagrams
\begin{align}
	\tikzc{
		\up{0}{0}
	}~
	&&
	\tikzc{
		\rcup{0}{0}
	}~
	&&
	\tikzc{
		\rcap{0}{0}
	}~
	&&
	\tikzc{
		\lcup{0}{0}
	}~
	&&
	\tikzc{
		\lcap{0}{0}
	}~
\end{align} 
\begin{align}\label{def:CupCapBox}
	\tikzc{
		\uucup{0}{0}\uucap{0}{-4}
	}~
	&& &&
	\tikzc{
		\drawsquare[k]{0}{0}}
\end{align}
for $k\in\bZ$ modulo rel boundary isotopies. Notice that the last diagram $\boxed{k}$ has no boundary components. We call such elementary diagrams boxes. The other diagram in \eqref{def:CupCapBox} is called a (disoriented) cupcap. We refer to the non-solid components of cupcaps as doubled strands. The orientation at the endpoints of a diagram in $\Hom_{\uWeyl}(Q_{\epsilon},Q_{\epsilon'})$ must agree with the signs in the sequences $\epsilon$ and $\epsilon'$, where $+$ corresponds to $\uparrow$ and $-$ corresponds to $\downarrow$\,. Composition of morphisms is given by stacking diagrams. Diagrams without boundary components are endomorphisms of $\mathbbm{1}$.

Before stating the relations in $\uWeyl'$, we make the following notational conveniences for $k\geq 0$. 
\begin{align}
	\label{eq:DefZeroBubble}
    {\tikzc{\up[top>]{0}{0}\vseg{0}{-2}\drawP{0}{0}{0}}}~&:=~{\tikzc{\up[top>]{0}{0}\vseg{0}{-2}}~-[2]~\tikzc{\vseg[top>]{0}{4}\ncap{2}{4}
    \vseg[bottom<]{4}{2}
    \vseg{4}{0}
    \cupcap{0}{0}\ncup{2}{0}
    \vseg[top>]{0}{-2}
    }}
    &
    {\tikzc{
    \drawP{0}{0}{0}
    }}
    ~&:=~
   { \tikzc{
    \widercup{0}{0}{2}
    \widecap{0}{0}{2}
    \ncup{1}{0}
    \rcap{1}{0}
    \drawP{1}{0}{0}
    }}
    \\[1em]
    {\tikzc{
			\Ncup{0}{0}
			\Rcap{0}{0}
		}}
		~&:=~
		{\tikzc{ 
			\widecap{0}{0}{3}\ncap{2}{0}
			\nncap{0}{-4}\uucup{0}{0}
			\widecup{0}{-4}{3}\ncup{2}{-4}
			\vseg{4}{-4}\vseg{6}{-4}
			\down{4}{-2}\down{6}{-2}
		} }
   &
    {\tikzc{
			\Ncap{0}{0}
			\Rcup{0}{0}			
		}}
		~&:=~
		{\tikzc{ 
			\widecap{0}{0}{3}\ncap{2}{0}
			\nncap{4}{-4}\uucup{4}{0}
			\widecup{0}{-4}{3}\ncup{2}{-4}
			\vseg{0}{-4}\vseg{2}{-4}
			\down{0}{-2}\down{2}{-2}
		} }
   \\[1em]
    {\tikzc{
    \drawP{2}{0}{k+1}}
    }~&:=~
    {\tikzc{
    \Ncup{0}{0}
    \Rcap{0}{0}
    \drawP{1}{0}{k}
    }}
    &
    {\tikzc{\up[top>]{0}{0}\vseg{0}{-2}\drawP{0}{0}{k}}}~&:=~
    {\tikzc{\up[top>]{0}{0}\vseg{0}{-2}\drawP{-2}{0}{k}}}
    \label{eq:BubbleDefs}
    \\[1em]
    {\tikzc{
    	\lefte{0}{0}
    }}~&:=~
    {\tikzc{
    	\ncap{-2}{4}\up{2}{4}\down[lower<]{4}{4}
    	\cupcap{0}{0}
    	\ncup{2}{0}\up[lower>]{0}{-2}
    	\vseg{-2}{2}\vseg{4}{2}
    	\vseg{-2}{0}
    	\down{-2}{-2}\vseg{4}{0}
    }}
    &
     {\tikzc{
    		\righte{0}{0}
    }}~&:=~
    {\tikzc{
    		\up{0}{4}\rcap{2}{4}
    		\cupcap{0}{0}
    		\rcup{-2}{0}\up[lower>]{2}{-2}
    		\vseg{-2}{2}\vseg{4}{2}
    		\vseg{-2}{0}
    		\vseg{4}{0}
    		\down[upper<]{-2}{4}
    		\down{4}{-2}
    }}
   \end{align}
   \vspace{-\baselineskip}
   \begin{align}
   	\left(~
   	\tikzc{\drawsquare[k+1]{0}{0}}
   	~\right)^{\dagger}
   	:=~	
   	\tikzc{\rcup{0}{0}\ncap{0}{0}}
   	~-~
   	\tikzc{\drawsquare[k]{0}{0}}
   \end{align}

\begin{rem}\label{rem:ansatz} The local relations we impose are significantly more complicated than in the Heisenberg category. We don't have a satisfying explanation for this complexity, but our ansatz for imposing relations is ``relations that hold in the action of the category should hold in the category itself.'' The relations imposed will hold for both categories we construct from $\uWeyl'$ in their actions on Temperley-Lieb modules.
\end{rem} 

We impose the following local relations between morphisms in $\uWeyl'$:

\begin{align} 
	\label{eq:AlternatingCupCaps}
    {[2]^3~
    \tikzc{
    \cupcap{0}{0}
    \vseg{4}{0}
    \vseg{4}{2}
    \cupcap{2}{4}
    \vseg{0}{4}
    \vseg{0}{6}
   \uucup{0}{12}\nncap{0}{8}
    \vseg{4}{8}
    \up[top>]{4}{10}
    }}
    ~&=~
    {[2]~\tikzc{
    \uucup{0}{4}\nncap{0}{0}
    \vseg{4}{0}
    \up[top>]{4}{2}
    }}
    &
    {[2]^2~\tikzc{
    \cupcap{0}{0}
    \vseg[bottom<]{4}{0}
    \vseg{4}{2}
    \ncap{2}{4}
    \lcup{2}{8}
    \vseg{0}{4}
    \vseg{0}{6}
    \uucup{0}{12}\nncap{0}{8}
    \vseg{4}{8}
    \vseg{4}{10}
    }}
    ~&{=~
    [2]~\tikzc{
    \uucup{0}{4}\nncap{0}{0}
    \vseg[bottom<]{4}{0}
    \vseg{4}{2}
    }}
\end{align}
and reflections of the relations in \eqref{eq:AlternatingCupCaps} across a vertical axis, 
\begin{align}
	\label{eq:TraceE}
    [2]~{\tikzc{ 
        \ncap{-2}{6}\up[top>]{2}{6}
        \down[bottom<]{-2}{4}
        \vseg{-2}{2}\cupcap{0}{2}
        \ncup{-2}{2}\up{2}{0}
    }}
    ~=~
    {\tikzc{
        \vseg{0}{-2}\vseg{0}{0}\vseg[top>]{0}{2}
    }}
    &&
    [2]~\tikzc{\vseg[top>]{0}{4}\ncap{2}{4}
    \vseg[bottom<]{4}{2}
    \vseg{4}{0}
    \cupcap{0}{0}\ncup{2}{0}
    \vseg[top>]{0}{-2}
    }
    ~=\left(1-{\tikzc{
    \drawP{0}{0}{0}
    }}\right)~{\tikzc{\up{0}{0}\vseg{0}{-2}}}
\end{align}
\begin{align}
	\label{eq:CCWCircles}
		{\tikzc{
		\rcup{0}{0}
		\ncap{0}{0}
	}}
	~&=~
	[2]
	&
	{\tikzc{
		\rcup{0}{0}
		\ncap{0}{0}
		\drawP{1}{0}{k}
	}}
	~&=~
	{\tikzc{
		\drawsquare[-2k+1]{0}{0}
		\drawP{4.5}{0}{k-1}
	}
	~
	+
	\left(~
	\tikzc{\drawsquare[-2k]{0}{0}}
	~
	\right)^{\dagger}
	\tikzc{
		\drawP{1}{0}{k}
	}}\\
	{\tikzc{
		\ncup{0}{0}
		\rcap{0}{0}
	}}
	~&=~
	[2]-{\tikzc{\drawP{1}{0}{0}}
	\left(
	~
	\tikzc{\drawsquare[0]{0}{0}}
	~\right)^{\dagger} }
	&	{\tikzc{
			\ncup{0}{0}
			\rcap{0}{0}
			\drawP{1}{0}{k}
	}}
	~&=~
	{\tikzc{
			\drawsquare[-2k-1]{0}{0}
			\drawP{3.5}{0}{k}
	}
	~
	+
	\left(
	~
	\tikzc{\drawsquare[-2k-2]{0}{0}}
		~\right)^{\dagger}
		\tikzc{
			\drawP{1}{0}{k+1}
	}}
	\label{eq:CWCircles}
\end{align}
\begin{align}
	\label{eq:PThroughUp}
	\tikzc{
		\up[top>]{0}{0}\vseg{0}{-2}
		\drawP{-1.5}{0}{k}
	}~=~
	\tikzc{
		\up[top>]{0}{0}\vseg{0}{-2}
		\drawP{-1.5}{0}{k}
	}
	\left(
	\tikzc{
		\drawP{0}{0}{k}
	}
	~+~
	\tikzc{
		\drawP{0}{0}{k-1}
	}
	\right)
	&&
	\tikzc{
		\up[top>]{0}{0}\vseg{0}{-2}
		\drawP{1.5}{0}{k}
	}~=
	\left(
	\tikzc{
		\drawP{0}{0}{k}
	}
	~+~
	\tikzc{
		\drawP{0}{0}{k+1}
	}
	\right)
	\tikzc{
		\up[top>]{0}{0}\vseg{0}{-2}
		\drawP{1.5}{0}{k}
	} 
\end{align}
\begin{align}
	\label{eq:DoubledRelations}
		\tikzc{
			\ddcup{0}{9}\uucup{4}{9}
			\Widercup{1}{7}{2}
			\ddcap{0}{0}\uucap{4}{0}
			\Widelcap{1}{2}{2}}
		~=~
		\tikzc{
			\ddcap{0}{0}\ddcup{0}{4}
			\uucap{4}{0}\uucup{4}{4}
		}
	&&
	&&
	\tikzc{\uucup{0}{4}\nncap{0}{0}
		\drawP{-1}{2}{0}}~=0 
\end{align}
\begin{align}
	\label{eq:BubbleBoxRelations}
	\tikzc{
		\drawP{0}{0}{k}
		\drawP{2}{0}{j}
	}
	=\delta_{kj}~
	\tikzc{
		\drawP{2}{0}{k}
	}
	&&
	\tikzc{
		\drawsquare[k]{-2}{0}
		\up[top>]{0}{0}\vseg{0}{-2}}
	~=~
	\tikzc{
		\drawsquare[k+1]{3}{0}
		\up[top>]{0}{0}\vseg{0}{-2}}
\end{align}
\begin{align}\label{eq:BoxInverse}
	\left(~\tikzc{
		\drawsquare[k]{0}{0}
	}~\right)^{\dagger}=
	\left(~\tikzc{
		\drawsquare[k]{0}{0}
	}~\right)^{-1}\mbox{for $k\geq0$}
	&&
	\tikzc{
	\drawP{0}{0}{k}}~=0~\mbox{for $k<0$}
\end{align}
\begin{align}
	\label{eq:downKup}
	\tikzc{\vseg{-2}{2}\up[top>]{0}{2}\down[bottom<]{-2}{0}\vseg{0}{0}\drawP{-1}{2}{k}\drawP{1}{2}{k}\drawP{-3}{2}{k}}
	~=~
	\tikzc{
		\vseg{0}{5}\up[top>]{2}{5}    
		\ncup{0}{5}
		\ncap{0}{2}
		\vseg{2}{0}\down{0}{0}
		\drawP{1}{5}{k}
		\drawP{1}{2}{k}
		\drawP{4}{5}{k}
		\drawsquare[-2k]{4}{2}
	}
	&&
	\tikzc{\vseg{-2}{2}\up[top>]{0}{2}\down[bottom<]{-2}{0}\vseg{0}{0}\drawP{-1}{2}{k}\drawP{2}{2}{k-1}\drawP{-4}{2}{k-1}}
	~=~
	\tikzc{
		\vseg{0}{5}\up[top>]{2}{5}    
		\ncup{0}{5}
		\ncap{0}{2}
		\vseg{2}{0}\down{0}{0}
		\drawP{1}{5}{k}
		\drawP{1}{2}{k}
		\drawP{3.5}{3.5}{k-1}
	}\left(~
	\tikzc{\drawsquare[-2k+1]{0}{0}}
	~\right)^{\dagger}
\end{align} 
as well as
\begin{equation}\label{eq:MoveBubbles}
 \begin{aligned}
 	{[2]~
 	\tikzc{
 		\up[top>]{0}{4}\up[top>]{2}{4}
 		\cupcap{0}{0}
 		\vseg{0}{-2}\vseg{2}{-2}
 		\drawP{1}{0}{k}
 	}
 	~=}&{~
 	[2]~
 	\tikzc{
 		\up[top>]{0}{5.25}\up[top>]{3.5}{5.25}
 		\widecupcap{0}{0}{1.75}
 		\vseg{0}{-2}\vseg{3.5}{-2}
 		\drawP{1.75}{5.5}{k+1}
 		\drawP{3}{2.75}{k}
 	}}
 	~{+~
 	[2]~
 	\tikzc{
 		\up[top>]{0}{5.25}\up[top>]{3.5}{5.25}
 		\widecupcap{0}{0}{1.75}
 		\vseg{0}{-2}\vseg{3.5}{-2}
 		\drawP{1.75}{5.5}{k-1}
 		\drawP{0}{2.75}{k}
 	}~
 	+~
 	\tikzc{
 		\up[top>]{0}{0}\up{2}{0}
 		\vseg{0}{-2}\vseg{2}{-2}
 		\drawP{-1.75}{0}{k+1}
 		\drawP{1}{0}{k}
 		\drawP{3}{0}{k}
 	}
 	~\left(~\tikzc{\drawsquare[-2k]{8}{0}}~\right)^{\dagger}
    }
 \\
 	&{+~
 	\tikzc{
 		\up[top>]{0}{0}\up{2}{0}
 		\vseg{0}{-2}\vseg{2}{-2}
 		\drawP{-1}{0}{k}
 		\drawP{1}{0}{k}
 		\drawP{3.75}{0}{k-1}
 	}
 	~\tikzc{\drawsquare[-2k+1]{8}{0}}
    ~-~
 	\tikzc{
 		\up[top>]{0}{0}\up{3.5}{0}
 		\vseg{0}{-2}\vseg{3.5}{-2}
 		\drawP{-1.75}{0}{k+1}
 		\drawP{1.75}{0}{k+1}
 		\drawP{4.5}{0}{k}
 	}
 	~\tikzc{\drawsquare[-2k-1]{8}{0}}
 	~
 	}
    \\&
    {-~
 	\tikzc{
 		\up[top>]{0}{0}\up{3.5}{0}
 		\vseg{0}{-2}\vseg{3.5}{-2}
 		\drawP{-1}{0}{k}
 		\drawP{1.75}{0}{k-1}
 		\drawP{5.25}{0}{k-1}
 	}
 	\left(~\tikzc{\drawsquare[-2k+2]{8}{0}}~\right)^{\dagger}}
 \end{aligned}
 \end{equation}

\begin{rem}\label{rem:DownKUp}
	We obtain a slightly different, but equivalent, set of relations from (\ref{eq:downKup}) by rotating the diagrams using cups and caps.
	\begin{align*}
		{\tikzc{
			\vseg{2}{5}\up[top>]{0}{5}    
			\ncup{0}{5}
			\ncap{0}{2}
			\vseg{0}{0}\down{2}{0}
			\drawP{1}{5}{k}
			\drawP{1}{2}{k}
			\drawP{2.5}{3.5}{k}
		}
		~=~
		\tikzc{
			\vseg{0}{2}\up[top>]{-2}{2}
			\down[bottom<]{0}{0}\vseg{-2}{0}
			\drawP{1}{2}{k}
			\drawsquare[-2k-1]{4.5}{2}
			\drawP{-1}{2}{k}\drawP{-3}{2}{k}
			}}
		&&
		{\tikzc{
			\vseg{2.5}{7}\up[top>]{-1}{7}    
			\widecup{-1}{7}{1.75}
			\widecap{-1}{2}{1.75}
			\vseg{-1}{0}\down{2.5}{0}
			\drawP{.75}{7}{k-1}
			\drawP{.75}{2}{k-1}
			\drawP{3}{4.5}{k}
		}
		~=~
		\tikzc{
			\vseg{1.5}{2}\up[top>]{-2}{2}
			\down[bottom<]{1.5}{0}\vseg{-2}{0}
			\drawP{2.5}{2}{k}
			\drawP{-.25}{2}{k-1}\drawP{-3}{2}{k}
		}
		\left(~\tikzc{
			\drawsquare[-2k]{2}{1}}~\right)^{\dagger}}
	\end{align*} 
	The relations in each of (\ref{eq:downKup}) and those directly above, together with (\ref{eq:PThroughUp}) imply
	\begin{align*}
		\tikzc{\vseg{-2}{2}\up[top>]{0}{2}\down[bottom<]{-2}{0}\vseg{0}{0}\drawP{-1}{2}{k}}
		~=~
		\tikzc{
			\vseg{0}{5}\up[top>]{2}{5}    
			\ncup{0}{5}
			\ncap{0}{2}
			\vseg{2}{0}\down{0}{0}
			\drawP{1}{5}{k}
			\drawP{1}{2}{k}
			\drawP{4}{5}{k}
			\drawsquare[-2k]{4}{2}
		}
		~+~
		\tikzc{
			\vseg{0}{5}\up[top>]{2}{5}    
			\ncup{0}{5}
			\ncap{0}{2}
			\vseg{2}{0}\down{0}{0}
			\drawP{1}{5}{k}
			\drawP{1}{2}{k}
			\drawP{3.5}{3.5}{k-1}
		}\left(~
		\tikzc{\drawsquare[-2k+1]{0}{0}}
		~\right)^{\dagger}
		+~
		\tikzc{\vseg{-2}{2}\up[top>]{0}{2}\down[bottom<]{-2}{0}\vseg{0}{0}
			\drawP{1.75}{2}{k-1}\drawP{-3}{2}{k}\drawP{-1}{2}{k}}
		~+~
		\tikzc{\vseg{-2}{2}\up[top>]{0}{2}\down[bottom<]{-2}{0}\vseg{0}{0}
			\drawP{1}{2}{k}\drawP{-1}{2}{k}\drawP{-3.75}{2}{k-1}}
	\end{align*}
	\begin{align*}
		\tikzc{\vseg{0}{2}\up[top>]{-2}{2}\down[bottom<]{0}{0}\vseg{-2}{0}\drawP{-1}{2}{k}}
		~=~
		\tikzc{
			\vseg{2}{5}\up[top>]{0}{5}    
			\ncup{0}{5}
			\ncap{0}{2}
			\vseg{0}{0}\down{2}{0}
			\drawP{1}{5}{k}
			\drawP{1}{2}{k}
			\drawP{3}{3.5}{k}
		}
		~\left(
		~\tikzc{\drawsquare[-2k-1]{0}{0}}~
		\right)^{\dagger}
		~+~
		\tikzc{
			\vseg{2}{5}\up[top>]{0}{5}    
			\ncup{0}{5}
			\ncap{0}{2}
			\vseg{0}{0}\down{2}{0}
			\drawP{1}{5}{k}
			\drawP{1}{2}{k}
			\drawP{4}{5}{k+1}
			\drawsquare[-2k-2]{5}{2}
		}
		+~
		\tikzc{\vseg{0}{2}\up[top>]{-2}{2}\down[bottom<]{0}{0}\vseg{-2}{0}
			\drawP{1.75}{2}{k+1}\drawP{-3}{2}{k}\drawP{-1}{2}{k}}
		~+~
		\tikzc{\vseg{0}{2}\up[top>]{-2}{2}\down[bottom<]{0}{0}\vseg{-2}{0}
			\drawP{1}{2}{k}\drawP{-1}{2}{k}\drawP{-3.75}{2}{k+1}}
	\end{align*}
\end{rem}

\begin{rem}
	Relation \eqref{eq:MoveBubbles} is somewhat complicated but we make use of it in the proof of Proposition \ref{prop:IsosTable}.
\end{rem}

Let $\uWeyl$ be the Karoubi envelope of $\uWeyl'$. Thus, the objects of $\uWeyl$ are pairs $(Q_\epsilon, e)$, where $e:Q_\epsilon\to Q_\epsilon$ is an idempotent. Morphisms in $\uWeyl$ are triples $(f,e,e')$ where $f:Q_\epsilon\to Q_{\epsilon'}$ is a morphism in $\uWeyl'$ such that $f=e'\circ f=f\circ e$. For each $k\geq0$, let $\kimage[k]$ be the image of the idempotent $\circled{k}$ in $\End(\mathbbm{1})$ as an object in $\uWeyl$. 

\begin{rem}\label{rem:DoubledObjects}
	The second relation in \eqref{eq:AlternatingCupCaps} together with the first relation in \eqref{eq:CCWCircles} imply that 
	\begin{align*}
		\tikzc{ 
			\uucup{0}{4}\nncap{0}{0}\uucup{0}{8}\nncap{0}{4}
		}
		~=~
		\tikzc{ 
			\uucup{0}{4}\nncap{0}{0}
		}
	\end{align*}
	by taking the right trace of the former and resolving the counterclockwise circle. This shows that the cupcap is an idempotent in our category, and similarly for its dual. Let $Q_{\oplus}$ denote the image of this projection on $Q_+^{\otimes 2}$ as an object in $\uWeyl$. The image of the dual cupcap is denoted $Q_{\ominus}$. Diagrammatically, we write $Q_{\oplus}$ as the source and target of an isolated ``doubled strand''. It is then natural write the projection and inclusion maps between $Q_+^{\otimes 2}$ and $Q_{\oplus}$ as factors of the cupcap:
    \begin{align*}
		\tikzc{ 
			\uucap{0}{0}
		}
		~:Q_+\otimes Q_+\to Q_{\oplus}
		&&
        \tikzc{ 
			\uucup{0}{4}
		}
		~:Q_{\oplus}\to Q_+\otimes Q_+
	\end{align*}
    Relations which only involve doubled strands can be expressed as relations on the full subcategory of $\uWeyl$ generated by $Q_\oplus$ and $Q_\ominus$. In particular, the first relation in \eqref{eq:DoubledRelations} and the relation above  can be formulated as
	\begin{align}\label{eq:DoubleRels}
		\tikzc{\Lcap{0}{0}\Rcup{0}{4}}
		~=~
		\tikzc{\Vseg{0}{2}\Down[bottom<]{0}{0}\Vseg{2}{0}\Up[top>]{2}{2}}
		&&\mbox{and}&&
		\tikzc{ 
			\uucup{0}{4}\nncap{0}{4}
		}
		~=~
		\tikzc{ 
			\Up[top>]{0}{2}\Vseg{0}{0}
		}
	\end{align}
 
	Left and right duality between $Q_\oplus$ and $Q_\ominus$ follow from left and right duality between $Q_+$ and $Q_-$. For example,
	\begin{align*}
		\tikzc{
			\Up[top>]{0}{0}
			\Ncap{2}{0}\Ncup{0}{0}
			\Up{4}{-2}
		}~=~
		\tikzc{
			\Up[top>]{1}{4}\Vseg{1}{2}
			\ddcup{4}{4}\ncap{6}{4}\uucup{8}{4}
			\widecap{4}{4}{3}
			\uucap{0}{0}\ddcap{4}{0}\Vseg{9}{0}
			\Up[top>]{9}{-2}	\Vseg{9}{-4}
			\widecup{0}{0}{3}\ncup{2}{0}
		}
		~=~
		\tikzc{
			\nncap{0}{0}\uucup{0}{0}
			\nncap{0}{4}\uucup{0}{4}
		}
		~=~
		\tikzc{
			\Up[top>]{0}{2}
			\Vseg{0}{0}
		}
	\end{align*}
\end{rem}

\begin{rem}\label{rem:fromheis}
    The standard surjection from the Hecke algebra $H_n$ to $\TLn$ defined on standard generators $\sigma_i\mapsto q^{1/2}-q^{-1/2}\ckme_i$ does not determine a functor between the Heisenberg and Weyl categories $\Heis_q^{LS}\to\uWeyl$. Indeed, the relation 
    \[0~=~\tikzc{\draw[very thick] (0,0) to (0,2em) to [out = 90, in = -90] (-2em,4em) to [out = 90, in = 0] (-3em,5em) to [out = 180, in = 90] (-4em,4em) to (-4em, 2em) to [out = -90, in= 180]  (-3em, 1em) to [out = 0, in= -90] (-2em,2em);
        \draw[line width=4pt,white] (-2em,2em) [out = 90, in= -90] to (0em, 4em);
        \draw[very thick,->,-stealth] (-2em,2em) [out = 90, in= -90] to (0em, 4em) to (0em,6em); }\]
    in $\Heis_q^{LS}$ mapping to 
    \[
    0=q^{1/2}~
    \tikzc{
        \rcup{0}{0}
		\ncap{0}{0} 
        \vseg{3}{-2}
        \up{3}{0}
        }
        ~-q^{-1/2}[2]~
    \tikzc{
        \ncap{-2}{4}
        \vseg{-2}{0}\vseg{-2}{2}\cupcap{0}{0}
        \ncup{-2}{0}
        \vseg{2}{-2}
        \up{2}{4}
        }
        ~=~
        q^{3/2}~\tikzc{\vseg{3}{-2}
        \up{3}{0}}
    \]
    implies $\mathrm{id}_{Q_+}=0$.
\end{rem}

\subsection{Basic Isomorphisms in $\uWeyl$}\label{ssec:WeylIsos}

There are several isomorphisms in $\uWeyl$ which follow from single relations together with idempotency of bubbles. We will suppress the tensor product from the notation to improve readability, i.e. $A\otimes B$ is written as $AB$. For example,
\begin{align}\label{eq:IsoCThroughQ}
    C_k Q_+&\cong C_k Q_+ C_k\oplus C_k Q_+ C_{k-1}
    \\
    Q_+ C_k&\cong C_k Q_+ C_k\oplus C_{k+1} Q_+ C_{k}
\end{align}
and similar identities involving $Q_-$ follow from \eqref{eq:PThroughUp} and its rotation, and \eqref{eq:BubbleBoxRelations}. In addition, invertiblity of $\boxed{0}$ from \eqref{eq:BoxInverse}, and \eqref{eq:CWCircles} and \eqref{eq:downKup} imply
\begin{align}\label{eq:CQCQC1}
    C_0 Q_- C_0 Q_+ C_0&\cong C_0\,.
\end{align}

The remaining isomorphism, which lends this category its name, is stated as a proposition.

\begin{prop}\label{prop:WeylIso}
	The following isomorphism holds in $\uWeyl$:
	\begin{equation*}
	Q_{-+}\cong Q_{+-}\oplus \kimage[0]\,.
	\end{equation*}
\end{prop} 
\begin{proof}
	Taking the second relation in (\ref{eq:AlternatingCupCaps}), adjoining (tracing) the left strands, and reflecting across a vertical axis, we observe that
	\begin{align*}
		[2]^2~\tikzc{
			\lefte{0}{0}\righte{0}{2}
		}
		~=
		[2]~
		\tikzc{
			\down{-2}{0}\vseg{-2}{2}
			\vseg[top>]{0}{4}\ncap{2}{4}
			\vseg[bottom<]{4}{2}
			\vseg{4}{0}
			\cupcap{0}{0}\ncup{2}{0}
			\vseg[top>]{0}{-2}
		}
		~=
		~
		\tikzc{
			\down{0}{0}\vseg{0}{2}
			\up{2}{2}\vseg{2}{0}
		}
		~-~
		\tikzc{
			\down{0}{0}\vseg{0}{2}
			\drawP{1}{2}{0}
			\up{2}{2}\vseg{2}{0}
		}
	\end{align*}
	with the second equality coming from the second relation in \eqref{eq:TraceE}. We isolate the $\Id_{Q_{-+}}$ term then apply \eqref{eq:PThroughUp} and \eqref{eq:downKup} at $k=0$ to obtain the identity
	\begin{align*}
		\label{eq:DownUpRel}
		\tikzc{
			\down{0}{0}\vseg{0}{2}
			\up{2}{2}\vseg{2}{0}
		}
		~=~
		[2]^2~\tikzc{
			\lefte{0}{0}\righte{0}{2}
		}
		~+~
		\tikzc{
			\vseg{0}{5}\up[top>]{2}{5}    
			\ncup{0}{5}
			\ncap{0}{2}
			\vseg{2}{0}\down{0}{0}
			\drawP{1}{5}{0}
			\drawP{1}{2}{0}
			\drawP{4}{5}{0}
			\drawsquare[0]{4}{2}
		}
	\end{align*} 
	Notice that only one term from \eqref{eq:PThroughUp} contributes. The morphisms on the right side of the relation factor through the objects $Q_{+-}$ and $\kimage[0]$ as presented in the diagram below. \begin{equation*}
		\begin{tikzpicture}[baseline=0pt]
			\draw (0,3) node {$Q_{-+}$};
			\draw (0,-3) node {$Q_{-+}$};
			\draw (-3,0) node {$Q_{+-}$};
			\draw (3,0) node {$\kimage[0]$};
			\draw[->] (-.5,-2.5) to (-2.5,-.5);
			\draw (-1.2,-1.2) node {$\rho_1$};
			\draw[->] (.5,-2.5) to (2.5,-.5);
			\draw (1.2,-1.2) node {$\rho_2$};
			\draw[->] (-2.5,.5) to (-.5,2.5);
			\draw (-1.2,1.2) node {$\iota_1$};
			\draw[->] (2.5,.5) to (.5,2.5);
			\draw (1.2,1.2) node {$\iota_2$};
			\draw (-9em,-6em) node {$[2]$}; 
			\lefte{-8}{-7};
			\draw (-9em,7em) node {$[2]$}; 
			\righte{-8}{6}
			{
				\down{6}{-7}\vseg{8}{-7}  
				\ncap{6}{-5}
				\drawP{7}{-5}{0}
				\drawP{9}{-4}{0}
				\drawsquare[0]{10}{-6}
			}
			
			{
				\vseg{6}{5}\up[top>]{8}{5}    
				\ncup{6}{5}
				\drawP{7}{5}{0}
				\drawP{9}{4}{0}
			}
		\end{tikzpicture}
	\end{equation*}
	Thus the equality above can be formulated as $\iota_1\rho_1+\iota_2\rho_2=\Id_{Q_{-+}}$. We have $\rho_2\iota_1=0$ and $\rho_1\iota_2=0$ as a consequence of \eqref{eq:DoubledRelations}. One can check $\rho_1\iota_1=\Id_{Q_{+-}}$ using \eqref{eq:AlternatingCupCaps} as well as $\rho_2\iota_2=\Id_{\kimage[0]}$ since $\circled{-1}=0$ and $\boxed{0}\,\cdot\,\boxed{0}\,^\dagger=1$. This proves the desired isomorphism.
\end{proof}

\begin{rem}
    In the Grothendieck group $K_0(\uWeyl)$ the isomorphism in Proposition \ref{prop:WeylIso} becomes 
\begin{equation}\label{eq:KWeyliso}
[Q_-][Q_+]-[Q_+][Q_-]=[\kimage[0]]
\end{equation}
a loosening of the defining relation of the Weyl algebra where $1$ is replaced by a \emph{noncentral} idempotent. 

This isomorphism is analogous to one in the quantum Heisenberg categories $\mathcal{H}_{q,k}^{BSW}$ with central charge $k$ studied by \cite{BSW20} (which specialize to the quantum Heisenberg category of \cite{LS13} by setting $k=-1$). In the Heisenberg categories with nonpositive central charge, there is an isomorphism
\begin{align}
	Q_{-+}^H
	\cong
	Q_{+-}^H \oplus\mathbbm{1}^{(-k)}
\end{align} 
In the isomorphism of Proposition \ref{prop:WeylIso}, the object $\kimage[0]$ on the right hand side is a subobject of $\mathbbm{1}$. In a rough sense, this behavior is ``in between'' the Heisenberg categories with central charges $k=0$ and $k=-1$. 
\end{rem}

\subsection{Relations Involving Bubbles}
In this section we state relations that follow from the defining relations of the universal Weyl category that primarily involve bubbles or diagrams without boundary components. 

\begin{lem}
	\label{lem:PThroughE}
	For $k\geq0$, we have the equality
	\begin{align*}
		\tikzc{\uucup{0}{4}\nncap{0}{0}
			\drawP{-1}{2}{k+1}}
		~=~
		\tikzc{\uucup{0}{4}\nncap{0}{0}
			\drawP{-1}{2}{k+1}\drawP{3}{2}{k}}
		~=~
		\tikzc{\uucup{0}{4}\nncap{0}{0}
			\drawP{3}{2}{k}}
	\end{align*}
\end{lem}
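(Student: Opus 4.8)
Write $c\colon Q_+\otimes Q_+\to Q_+\otimes Q_+$ for the disoriented cupcap, which is idempotent by Remark~\ref{rem:DoubledObjects}, and for $j\in\bZ$ write $B^{\mathrm L}_j=\circled j\otimes\mathrm{id}_{Q_+\otimes Q_+}$ and $B^{\mathrm R}_j=\mathrm{id}_{Q_+\otimes Q_+}\otimes\circled j$ for the bubble $\circled j$ drawn to the far left, respectively the far right, of the pair of strands. Since $\circled j$ occupies a region of the plane disjoint from the rest of the diagram, the interchange law gives $B^{\mathrm L}_j c=c\,B^{\mathrm L}_j$, $B^{\mathrm R}_j c=c\,B^{\mathrm R}_j$ and $B^{\mathrm L}_i B^{\mathrm R}_j=B^{\mathrm R}_j B^{\mathrm L}_i$. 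In this notation the three diagrams in the statement are $B^{\mathrm L}_{k+1}c$, $\ B^{\mathrm L}_{k+1}B^{\mathrm R}_k c$ and $B^{\mathrm R}_k c$, and the plan is first to reduce both claimed equalities to the single identity $B^{\mathrm L}_{k+1}c=B^{\mathrm R}_k c$: granting it, $B^{\mathrm L}_{k+1}B^{\mathrm R}_k c=B^{\mathrm R}_k\bigl(B^{\mathrm L}_{k+1}c\bigr)=B^{\mathrm R}_k\bigl(B^{\mathrm R}_k c\bigr)=B^{\mathrm R}_k c$, the last step being $\circled k\circled k=\circled k$ from \eqref{eq:BubbleBoxRelations}.

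To prove $B^{\mathrm L}_{k+1}c=B^{\mathrm R}_k c$ I would induct on $k$, taking the base case $k=-1$: there $B^{\mathrm L}_0 c=0$ by the last relation of \eqref{eq:DoubledRelations} and $B^{\mathrm R}_{-1}c=0$ since $\circled{-1}=0$ by \eqref{eq:BoxInverse}, so both sides vanish. For the inductive step the point is to transport the bubble through the cupcap, and the tool is the defining relation \eqref{eq:MoveBubbles}, which rewrites a bubble lodged in the pocket between the two lower legs of a cupcap. To put $B^{\mathrm L}_{k+1}c$ into that form one first bends the left leg of $c$ up and over by inserting a zigzag via the duality relations \eqref{eq:duality}, so that the far-left bubble comes to sit in such a pocket; one then applies \eqref{eq:MoveBubbles} and unbends. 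In the resulting expression the two ``wide cupcap'' terms are handled by sliding their inner bubbles out past a leg with \eqref{eq:PThroughUp} and invoking the inductive hypothesis together with \eqref{eq:DoubledRelations}; the four ``straight strand'' terms carry boxes $\boxed{-2k},\ \boxed{-2k\pm1},\ \boxed{-2k+2}$ and their daggers, which for $k\ge 0$ have nonpositive index and are removed using the invertibility in \eqref{eq:BoxInverse} and the slide relation \eqref{eq:BubbleBoxRelations}, while the attendant products of bubbles collapse by orthogonality $\circled a\circled b=\delta_{ab}\circled a$ and by $\circled j=0$ for $j<0$; finally \eqref{eq:PThroughUp} recombines the surviving $\circled{k+1}$, $\circled k$, $\circled{k-1}$ contributions into the single bubble $\circled k$ on the right. (An alternative is to avoid \eqref{eq:MoveBubbles} altogether: expand $\circled{k+1}$ by its recursive definition \eqref{eq:BubbleDefs} as a doubled loop encircling $\circled k$, spell the doubled strands out in solid ones, and resolve the resulting solid circles with \eqref{eq:CWCircles}--\eqref{eq:CCWCircles}.)

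The main obstacle is exactly this bookkeeping. Each term produced by \eqref{eq:MoveBubbles} must be tracked with care for its sign, the index of its box, and above all the region of the plane in which each bubble ends up: ``far left,'' ``between the two strands,'' ``inside the cupcap on one side of the doubled strand,'' and ``far right'' are genuinely distinct once both strands are present, and moving a bubble between regions across a strand via \eqref{eq:PThroughUp} always produces an index shift $\circled j\rightsquigarrow\circled j+\circled{j\pm1}$. One must verify that these shifts, the box relations \eqref{eq:BubbleBoxRelations}--\eqref{eq:BoxInverse}, the annihilation in \eqref{eq:DoubledRelations}, the bubble orthogonality, and the inductive hypothesis together cancel everything except $B^{\mathrm R}_k c$; the trace identities \eqref{eq:TraceE} are a convenient extra tool for collapsing any closed loops that appear along the way.
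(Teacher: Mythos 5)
Your reduction in the first paragraph is fine, and your base case is fine, but the heart of the argument --- the inductive step proving $B^{\mathrm L}_{k+1}c=B^{\mathrm R}_k c$ --- is not actually carried out, and as sketched it would not go through. You invoke \eqref{eq:MoveBubbles} after ``bending the left leg'' of the cupcap, but that relation is stated for a bubble sitting in the pocket of a cupcap on two upward strands, and after bending with \eqref{eq:duality} you are no longer in that local configuration; the subsequent ``unbend'' and the asserted cancellations are exactly the content that needs verification, and you say so yourself. Worse, your mechanism for discarding the boxes is backwards: the boxes produced by \eqref{eq:MoveBubbles} carry labels $-2k$, $-2k\pm1$, $-2k+2$, which for $k\geq 1$ are nonpositive, and \eqref{eq:BoxInverse} only guarantees that $\boxed{j}^{\dagger}=\boxed{j}^{-1}$ for $j\geq 0$; so ``removed using the invertibility in \eqref{eq:BoxInverse}'' does not apply to precisely the boxes you need to remove (and invertibility alone would not delete a box from a single term in any case). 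So the main route has a genuine gap.

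The statement is in fact essentially immediate, and you brush against the right idea only in your parenthetical alternative. By the third definition in \eqref{eq:BubbleDefs}, $\circled{k+1}$ \emph{is} the doubled circle enclosing $\circled{k}$; there is nothing to unwind by induction. Substituting this for the far-left bubble, the doubled circle sits next to the doubled strand of the cupcap, and the first relation in \eqref{eq:DoubledRelations} (equivalently $\mathrm{Lcap}\circ\mathrm{Rcup}=\mathrm{id}$ on $Q_{\ominus}Q_{\oplus}$, as in Remark \ref{rem:DoubledObjects}) lets you fuse the circle into that doubled strand, which transports the enclosed $\circled{k}$ into the region on the other side of the cupcap; this yields all the claimed equalities in one step, with no boxes, no \eqref{eq:MoveBubbles}, and no expansion of doubled strands into solid ones followed by \eqref{eq:CWCircles}--\eqref{eq:CCWCircles} (which would reintroduce boxes and is not needed).
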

\begin{proof}
	By definition of the $k+1$ labeled bubble 
	\begin{align*}
		{\tikzc{
				\drawP{2}{0}{k+1}}
		}~&=~
		{\tikzc{
				\drawP{-2}{0}{k+1}
				\Ncup{0}{0}
				\Rcap{0}{0}
				\drawP{1}{0}{k}
		}}=~
		{\tikzc{
				\Ncup{0}{0}
				\Rcap{0}{0}
				\drawP{1}{0}{k}
		}}
	\end{align*}
	Thus, we obtain the desired result by multiplying the above equalities on the right by the cupcap and applying the left relation in \eqref{eq:DoubledRelations}.
\end{proof}

 The identity $\circled{-1}=0$ is consistent with formally applying Lemma \ref{lem:PThroughE} at $k=-1$ and noting the second relation in (\ref{eq:DoubledRelations}).

\begin{lem}\label{lem:DoubledCircles}
	We have the following identities:
	\begin{align*}
{\tikzc{
	\Ncup{0}{0}
	\Rcap{0}{0}
	}}
	~={1-~\tikzc{
	\drawP{1}{0}{0}
}}&&\mbox{and}&&
{\tikzc{
	\Ncap{0}{0}
	\Rcup{0}{0}
	}}
	~&=1\,.
	\end{align*}
\end{lem}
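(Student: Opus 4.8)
The plan is to prove both identities by unfolding the doubled strands into solid strands and then evaluating the resulting closed solid diagrams. Recall from Remark~\ref{rem:DoubledObjects} that a doubled strand is the image of the cup-cap idempotent on $Q_+^{\otimes 2}$, and that by \eqref{eq:BubbleDefs} --- together with the obvious mirror images across a vertical axis --- each doubled cup and each doubled cap is built from nested solid cups and caps and the transition vertices relating a doubled strand to a pair of solid strands. Substituting these definitions turns each of the two doubled circles into a \emph{closed} solid diagram built from a solid cup-cap whose endpoints have been joined up into solid loops. The underlying solid picture is the same for both circles; what differs is the orientation the solid strands inherit from the disoriented endpoints of the doubled strand, and this orientation data is exactly what produces the two different scalars.

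To evaluate the closed solid diagrams I would use the partial-trace relations \eqref{eq:TraceE} to contract the legs of the cup-cap, each contraction absorbing a factor of $[2]$. For the second identity, the solid loops close up in the ``clockwise'' sense, so each contraction uses the first equation in \eqref{eq:TraceE} followed by the clockwise circle evaluation $[2]$ of \eqref{eq:CWCircles}; all the $[2]$'s cancel and the value is $1$. For the first identity the loops close up the other way, so one contraction instead uses the second equation in \eqref{eq:TraceE}, whose right-hand side carries the factor $1-\circled{0}$ (equivalently, a counterclockwise solid circle appears, evaluated in \eqref{eq:CCWCircles} as $[2]$ minus a bubble-times-box term); tracking this through and cancelling the box against its dual via \eqref{eq:BoxInverse} and the box-sliding relation in \eqref{eq:BubbleBoxRelations} leaves exactly $1-\circled{0}$. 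It is essential that every $[2]$ be cancelled at the level of diagrams, using \eqref{eq:TraceE} and \eqref{eq:AlternatingCupCaps}, rather than by dividing, since $[2]=q+q^{-1}$ is not invertible in $\mathbb{C}[q,q^{-1}]$; this is why those relations are stated with a leading $[2]$.

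The main difficulty will be pure bookkeeping: one must (i) track the orientations induced on the solid strands, since this orientation data is the only source of the difference between the two computations, and (ii) verify that every box $\boxed{k}$ created during the unfolding is cancelled in a pair, via \eqref{eq:BoxInverse} and \eqref{eq:BubbleBoxRelations}, so that no stray box survives and the only bubble left is $\circled{0}$, which one then recognizes from its definition in \eqref{eq:DefZeroBubble}. Before grinding through this I would first test a shorter route: the first identity is closely tied to the recursive definition of the bubbles $\circled{k+1}$ in \eqref{eq:BubbleDefs} and to Lemma~\ref{lem:PThroughE}, while the second should follow by closing up the doubled zig-zag relation of Remark~\ref{rem:DoubledObjects} into a loop; if either of these directly yields the stated constant it would be preferable to the full unfolding.
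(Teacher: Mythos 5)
Your plan is, in substance, the paper's own proof: the paper also unfolds the two doubled circles via the definitions in \eqref{eq:BubbleDefs}, applies \eqref{eq:TraceE} (the relation carrying the $1-\circled{0}$ factor for the first circle, the plain one for the second), and finishes with the solid circle evaluations \eqref{eq:CWCircles}--\eqref{eq:CCWCircles}. One small bookkeeping correction: in the first identity the term $\circled{0}\,\boxed{0}^{\dagger}$ produced by the counterclockwise circle is not removed by playing a box off against its dual via \eqref{eq:BoxInverse} and \eqref{eq:BubbleBoxRelations}; it dies because it sits against the factor $1-\circled{0}$ and $(1-\circled{0})\circled{0}=0$ by the orthogonality relation in \eqref{eq:BubbleBoxRelations}.

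The substantive problem is your insistence that ``every $[2]$ be cancelled at the level of diagrams rather than by dividing.'' That cannot be carried out with the relations you cite: the doubled circles come with no prefactor, while \eqref{eq:TraceE} and \eqref{eq:AlternatingCupCaps} only ever evaluate $[2]$ (or $[2]^2$) times such a configuration, and the clockwise circle only reappears as the scalar $[2]$. So any computation along these lines necessarily terminates in an identity of the form $[2]\cdot(\text{doubled circle})=[2]\cdot(\text{claimed value})$, and the final step is exactly the cancellation of $[2]$ that you rule out. The paper performs this cancellation, justifying it by asserting that $[2]$ is invertible since $q$ is not a root of unity; if you are uneasy because $[2]=q+q^{-1}$ is not a unit of $\mathbb{C}[q,q^{-1}]$, the honest repair is to extend scalars or argue that multiplication by $[2]$ is injective on the relevant morphism space, not to hope the $[2]$'s disappear diagrammatically. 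Your proposed shortcuts do not evaluate the empty doubled circle either: the recursion in \eqref{eq:BubbleDefs} together with Lemma~\ref{lem:PThroughE} only concerns doubled circles with a bubble inserted, and closing up the zig-zag of Remark~\ref{rem:DoubledObjects} returns an identity between closed diagrams rather than a scalar evaluation.
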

\begin{proof}
	Since $[2]$ is a unit in $\bC(q)$, each of the desired relations are verified by direct computation:
	\begin{align*}
		[2]~\tikzc{
			\Ncup{0}{0}
			\Rcap{0}{0}
		}
		~=~
		[2]~\tikzc{ 
			\widecap{0}{0}{3}\ncap{2}{0}
			\nncap{0}{-4}\uucup{0}{0}
			\widecup{0}{-4}{3}\ncup{2}{-4}
			\vseg{4}{-4}\vseg{6}{-4}
			\down{4}{-2}\down{6}{-2}
		} 
		~=~
		\left(1-{\tikzc{
				\drawP{0}{0}{0}
		}}\right)~{\tikzc{\ncup{0}{0}
		\rcap{0}{0}}}
	~=~
\left(1-{\tikzc{
		\drawP{0}{0}{0}
}}\right)\left(	[2]-{\tikzc{\drawP{1}{0}{0}}
		\left(
		~
		\tikzc{\drawsquare[0]{0}{0}}
		~\right)^{\dagger} }\right)
	~=~
	[2]\left(1-{\tikzc{
			\drawP{0}{0}{0}
	}}\right)
	\end{align*}
	\begin{align*}
		[2]~\tikzc{
			\Ncap{0}{0}
			\Rcup{0}{0}			
		}
		~=~
		[2]~\tikzc{ 
			\widecap{0}{0}{3}\ncap{2}{0}
			\nncap{4}{-4}\uucup{4}{0}
			\widecup{0}{-4}{3}\ncup{2}{-4}
			\vseg{0}{-4}\vseg{2}{-4}
			\down{0}{-2}\down{2}{-2}
		} 
		~=~
		{\tikzc{\ncup{0}{0}
				\rcap{0}{0}}}
		~=~[2]
	\end{align*}
\end{proof}
\begin{lem}\label{lem:CCWK}
	The following identities hold in $\uWeyl$:
	\begin{align*}
		{\tikzc{
				\Ncap{0}{0}
				\Rcup{0}{0}
				\drawP{1}{0}{0}
		}}
		~=0
		&& \mbox{and}&&
		{\tikzc{
				\Ncap{0}{0}
				\Rcup{0}{0}
				\drawP{1}{0}{k}
		}}
		~=~\tikzc{\drawP{1}{0}{k-1}} ~\mbox{for $k>0$\,.}
	\end{align*}
\end{lem}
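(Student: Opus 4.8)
The plan is to resolve the doubled circle into solid strands and slide the enclosed bubble out, mirroring the computation in the proof of Lemma~\ref{lem:DoubledCircles} but now carrying a bubble $\circled k$ in the region enclosed by the loop. Writing out $\tikzc{\Ncap{0}{0}}$ and $\tikzc{\Rcup{0}{0}}$ via the definitions in \eqref{eq:DefZeroBubble}, the doubled loop becomes a nested pair of solid circles joined by split/merge vertices (as in Remark~\ref{rem:DoubledObjects}), and along the loop there appears a copy of the cup-cap diagram $\tikzc{\uucup{0}{4}\nncap{0}{0}}$. The bubble $\circled k$, originally enclosed by the doubled loop, ends up on one side of this cup-cap diagram.

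With the picture in this form, the case $k>0$ follows from Lemma~\ref{lem:PThroughE}: reading $\circled k = \circled{(k-1)+1}$, the bubble slides through $\tikzc{\uucup{0}{4}\nncap{0}{0}}$ and emerges as $\circled{k-1}$ on the other side — which is now outside the loop — with no bubble left inside. What remains is the bare doubled circle $\tikzc{\Ncap{0}{0}\Rcup{0}{0}}$ disjoint from $\circled{k-1}$, and since $\tikzc{\Ncap{0}{0}\Rcup{0}{0}}=1$ by Lemma~\ref{lem:DoubledCircles}, the value is $\circled{k-1}$. For $k=0$ the slide cannot be carried out (it would require $\circled{-1}$), but that is exactly the degenerate case: the resolved diagram now contains the pattern $\tikzc{\uucup{0}{4}\nncap{0}{0}\drawP{-1}{2}{0}}$, which vanishes by the second relation of \eqref{eq:DoubledRelations}; hence $\tikzc{\Ncap{0}{0}\Rcup{0}{0}\drawP{1}{0}{0}}=0$. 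This is also what one gets by running the $k>0$ argument formally at $k=0$ and invoking $\circled{-1}=0$ from \eqref{eq:BoxInverse}, consistent with the remark after Lemma~\ref{lem:PThroughE}.

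The step I expect to be the main obstacle is getting the orientations right: one must check that in the resolution of $\tikzc{\Ncap{0}{0}\Rcup{0}{0}}$ the enclosed bubble lands on the side of $\tikzc{\uucup{0}{4}\nncap{0}{0}}$ occupied by $\tikzc{\drawP{-1}{2}{k}}$ in Lemma~\ref{lem:PThroughE} — the side from which a slide-through \emph{lowers} the index — rather than the opposite side, which is the one that produces $\circled{k+1}=\tikzc{\Ncup{0}{0}\Rcap{0}{0}\drawP{1}{0}{k}}$ for the reversed loop in \eqref{eq:BubbleDefs}. The outcome is forced: sliding $\circled 0$ out of the reversed loop $\tikzc{\Ncup{0}{0}\Rcap{0}{0}}$ must \emph{raise} the index (it yields $\circled 1$ by \eqref{eq:BubbleDefs}, and $(1-\circled 0)\,\circled 1=\circled 1$ is consistent with the bare-loop value $1-\circled 0$ of Lemma~\ref{lem:DoubledCircles}), so for $\tikzc{\Ncap{0}{0}\Rcup{0}{0}}$ the slide must lower it; but establishing this directly from \eqref{eq:DefZeroBubble} and the defining relations, rather than from this consistency heuristic, is where the real work lies.
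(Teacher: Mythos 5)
Your overall strategy is viable and, in the $k=0$ case, is exactly the paper's argument (expand the doubled circle and invoke the second relation of \eqref{eq:DoubledRelations}). But the proposal has a genuine gap, and it is precisely the one you flag yourself: after resolving the doubled circle into solid strands, you must know that the enclosed bubble lies in the region adjacent to the doubled segment on the side occupied by $\circled{k+1}$ in Lemma \ref{lem:PThroughE} (equivalently, by $\circled{0}$ in the vanishing relation of \eqref{eq:DoubledRelations}) — note this side-determination is needed for your $k=0$ case just as much as for $k>0$. The ``consistency'' argument you offer in its place is not a proof: from the fact that enclosing $\circled{k}$ in the loop of one orientation raises the index (which is just the definition \eqref{eq:BubbleDefs}), nothing forces that extracting a bubble from the oppositely oriented loop lowers it, unless you already know how bubbles pass through that loop — which is essentially the statement being proved, so the inference is circular. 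The claim is true, and it can be verified by writing out the resolution explicitly (as displayed in the proof of Lemma \ref{lem:DoubledCircles}) and chasing the orientations of the solid strands at the cupcap to see that the inner disk is the higher-labelled region; but that orientation chase is the actual content of your proof and is missing.

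For comparison, the paper's proof of the $k>0$ case avoids the solid resolution and the side-determination altogether: it substitutes the defining equation $\circled{k}=$ (doubled circle of the opposite orientation around $\circled{k-1}$) from \eqref{eq:BubbleDefs}, so the diagram becomes two nested, oppositely oriented doubled circles around $\circled{k-1}$; the first relation of \eqref{eq:DoubledRelations} (the doubled zigzag straightening of Remark \ref{rem:DoubledObjects}) merges these into a single doubled circle with $\circled{k-1}$ now in the exterior region, and Lemma \ref{lem:DoubledCircles} evaluates that circle to $1$. I would either complete your argument by carrying out the orientation check from the explicit expansions, or switch to this nesting argument, which stays entirely at the level of doubled strands.
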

\begin{proof}
	In the case $k=0$, after expanding the doubled cup and cap, we apply the second relation in \eqref{eq:DoubledRelations} to get zero. For $k>0$, we have
	\begin{align*}
		\tikzc{
			\Ncap{0}{0}
			\Rcup{0}{0}
			\drawP{1}{0}{k}
		}
		~=~
		\tikzc{
			\drawP{1}{0}{k-1}
			\Widecup{-1}{0}{2}
			\Widecap{-2}{0}{3}
			\Widercup{-2}{0}{3}
			\Widercap{-1}{0}{2}
		}
		~=~
		\tikzc{
			\drawP{1}{0}{k-1}
			\Ncap{-3}{0}
			\Rcup{-3}{0}
		}
		~=~
		\tikzc{
			\drawP{1}{0}{k-1}
		}
	\end{align*}
	The second equality is a consequence of the first relation in \eqref{eq:DoubledRelations} and the last equality is due to Lemma \ref{lem:DoubledCircles}.
\end{proof}

\begin{rem}\label{rem:infsum}
	Relations dependent on $\circled{k}$ appearing in  \eqref{eq:CWCircles}, \eqref{eq:CCWCircles}, and Lemma \ref{lem:CCWK}  are all consistent with the respective $\circled{k}$ independent relations in \eqref{eq:CWCircles}, \eqref{eq:CCWCircles}, and Lemma \ref{lem:DoubledCircles} via the formal infinite sum
	\begin{align*}
		\sum_{k=0}^\infty 
		\tikzc{
			\drawP{2}{0}{k}
		}=1
	\end{align*}
	as shown in the equalities below. 
	For the relations in \eqref{eq:CWCircles} and \eqref{eq:CCWCircles}, we have
	\begin{align*}
		\tikzc{
			\ncup{0}{0}
			\rcap{0}{0}
		}
		~=~
		\sum_{k=0}^\infty 
		\tikzc{
			\ncup{0}{0}
			\rcap{0}{0}
			\drawP{1}{0}{k}
		}
		~=~
		\sum_{k=0}^\infty
		\tikzc{
			\drawsquare[-2k-1]{0}{0}
			\drawP{3.5}{0}{k}
		}
		~
		+
		\left(
		~
		\tikzc{\drawsquare[-2k-2]{0}{0}}
		~\right)^{\dagger}
		\tikzc{
			\drawP{1}{0}{k+1}
		}
		~=~
		[2]-\tikzc{\drawP{1}{0}{0}}
		\left(
		~
		\tikzc{\drawsquare[0]{0}{0}}
		~\right)^{\dagger}
	\end{align*}
	\begin{align*}
		\tikzc{
			\rcup{0}{0}
			\ncap{0}{0}
		}
		~=~
		\sum_{k=0}^\infty
		\tikzc{
			\rcup{0}{0}
			\ncap{0}{0}
			\drawP{1}{0}{k}
		}
		~=~
		\sum_{k=0}^\infty~
		\tikzc{
			\drawsquare[-2k+1]{0}{0}
			\drawP{4.5}{0}{k-1}
		}
		~
		+
		\left(~
		\tikzc{\drawsquare[-2k]{0}{0}}
		~
		\right)^{\dagger}
		\tikzc{
			\drawP{1}{0}{k}
		}
		~=~
		[2]
	\end{align*}
	Between Lemmas \ref{lem:DoubledCircles} and \ref{lem:CCWK},
	\begin{align*}
		{\tikzc{
				\Ncup{0}{0}
				\Rcap{0}{0}
		}}
		~&=~
		{\sum_{k=0}^\infty 
			\tikzc{
				\Ncup{0}{0}
				\Rcap{0}{0}
				\drawP{1}{0}{k}
			}
			~=~
			\sum_{k=0}^\infty 
			\tikzc{
				\drawP{1}{0}{k+1}
			}
			~=1-~\tikzc{
				\drawP{1}{0}{0}
			}
		}
	\end{align*}
	\begin{align*}
		{\tikzc{
				\Ncap{0}{0}
				\Rcup{0}{0}
		}}
		~=~
		{\sum_{k=0}^\infty 
			\tikzc{
				\Ncap{0}{0}
				\Rcup{0}{0}
				\drawP{1}{0}{k}
			}
			~=~
			\sum_{k=0}^\infty 
			\tikzc{
				\drawP{1}{0}{k-1}
			}
			~=~
			\sum_{k=0}^\infty 
			\tikzc{
				\drawP{1}{0}{k}
			}
			~=1\,.
		}
	\end{align*}
	Although the infinite sum identity is not a relation in $\uWeyl$, it appears naturally in the categorical representation of the Weyl category on the direct sum over $n$ of $\TLn$-modules. On any module, $C_k$ the image of $\circled{k}$ in $\mathbbm{1}$, acts as a projection onto certain isotypic simple module components and the sum over all such projections is the identity on any module. Since there are only finitely many isomorphism classes of simple modules for a given $n$, all but finitely many terms in this infinite sum vanish.
\end{rem}

\begin{rem}
	In light of the relations
	\begin{align*}
		\tikzc{\Lcap{0}{0}\Rcup{0}{4}}
		~=~
		\tikzc{\Vseg{0}{2}\Down[bottom<]{0}{0}\Vseg{2}{0}\Up[top>]{2}{2}}
		&&\mbox{and}&&
		{\tikzc{
				\Ncap{0}{0}
				\Rcup{0}{0}
		}}
		~&=1
	\end{align*}
	from Remark \ref{rem:DoubledObjects} and Lemma \ref{lem:DoubledCircles}, we have the isomorphism $Q_{\ominus\oplus}\cong\mathbbm{1}$ in $\uWeyl$.
\end{rem}

\section{The Weyl 2-Category}\label{sec:Wacts}
As it is currently defined, the monoidal category $\uWeyl$ does not act on $\T$, the sum over $n$ of the categories of Temperley-Lieb modules. This is because the constants needed to define the action depend on $n$ in an essential way. This means that to define the action we want, we need to use the 2-categorical point of view.  In this section, we define the Weyl 2-category and compare isomorphisms which hold in the 2-category but not in the monoidal category. The action on $\bigoplus_n \TLn\mathrm{-mod}$ is described precisely in the next section. 

\subsection{The Weyl 2-Category}
The 2-category $\Wact$ is the Karoubi completion\footnote{By the Karoubi completion of a 2-category $\mathcal C$, we mean the 2-category with the same objects as $\mathcal C$, but whose morphism categories are the Karoubi completions of those of $\mathcal C$. } of a 2-category $\Wact'$ that we define now.

\begin{itemize}
    \item The set of objects of $\Wact'$ is $\nabla \sqcup \bZ_{\geq 0}$ ($\nabla$ should be thought of as the null object), 
    \item $\Hom_{\Wact'}(n,m)$ is trivial\footnote{The trivial additive 1-category has one object and one morphism.} if $m=\nabla$ or $n = \nabla$,
    \item For $m,n\in\bZ_{\geq0}$, $\Hom_{\Wact'}(n,m)$ is the category defined diagrammatically below.
\end{itemize}

\begin{defn}
    For $m,n\in\bZ_{\geq 0}$, the additive $\mathbb{C}(q)$-linear strict monoidal 1-category $\Hom_{\Wact'}(n,m)$ is defined as follows:
    \begin{itemize}
        \item Objects are $Q_\epsilon$, where $\epsilon = (\epsilon_\ell,\cdots, \epsilon_1)$ is a finite sequence of $+$ and $-$, with $m = n + \sum_{i = 1}^\ell \epsilon_i$ and  $n + \sum_{i=1}^j \epsilon_i \geq 0$ for each $1 \leq j \leq \ell$\footnote{In these sums, $+$ and $-$ are considered as $1$ and $-1$ respectively.}. Let $\mathbbm{1}_n$ denote the tensor unit in $\Hom_{\Wact'}(n,n)$.
        \item Morphisms in $\Hom_{\Wact'}(n,m)$ are generated by
        \begin{align*}
	\tikzc{
		\up{0}{0}
		\vseg{0}{-2}
		\draw (1em, 0em) node[] {$n$};
		\draw (-2em, 0em) node[] {$n+1$};
	}
 &&
 \tikzc{
		\down{0}{-2}
		\vseg{0}{0}
		\draw (1em, 0em) node[] {$n$};
		\draw (-2em, 0em) node[] {$n+1$};
	}~
 &&
 \tikzc{
		\uucup{0}{0}\uucap{0}{-4}
		\draw (2.5em, -2em) node[] {$n$};
		\draw (-1em, -2em) node[] {$n+2$};
	}~
      \end{align*}
      \begin{align*}  
	\tikzc{
		\widercup{0}{0}{2}
		\draw (-1em, 0em) node[] {$n$};
		\draw (2em, 0em) node[] {$n+1$};
		\draw (5em, 0em) node[] {$n$};
	}~
	&&
	\tikzc{
		\widercap{0}{0}{2}
		\draw (-1em, 0em) node[] {$n$};
		\draw (2em, 0em) node[] {$n-1$};
		\draw (5em, 0em) node[] {$n$};
	}~
	&&
	\tikzc{
		\widelcup{0}{0}{2}
		\draw (-5em, 0em) node[] {$n$};
		\draw (-2em, 0em) node[] {$n-1$};
		\draw (1em, 0em) node[] {$n$};
	},
	&&
	\tikzc{
		\widelcap{0}{0}{2}
		\draw (-5em, 0em) node[] {$n$};
		\draw (-2em, 0em) node[] {$n-1$};
		\draw (1em, 0em) node[] {$n$};
	}
	\end{align*}
    The rightmost label indicates the source 2-object\footnote{By 2-object, we mean an object of the 2-category $\Wact'$ (which belongs to $\nabla \sqcup \bZ_{\geq 0}$), not an object in one of its morphism 1-categories (which is a sequence $\epsilon$)
.} and the leftmost region indicates the target 2-object. Region labels for diagrams in $\Wact'$ extend uniquely after the choice of the rightmost label, and we may use the notation $\Hom_{\Wact'}(n,-)$ when the left region label is not specified.  Labeled boxes also appear as morphisms in $\Hom_{\Wact'}(n,n)$ via the relations imposed from $\uWeyl'$, but are not considered generators because of relation \eqref{eq:newrelBox}.
    \item We impose the local relations \eqref{eq:AlternatingCupCaps}--\eqref{eq:MoveBubbles} on morphisms; these are the same local relations imposed in $\uWeyl'$. As in $\uWeyl'$, morphisms are considered up to planar isotopy. We impose the \emph{additional} relations on morphisms in $2\Hom_{\Wact'}(\mathbbm{1}_n,\mathbbm{1}_n)$:
\begin{align}\label{eq:newrel}
	{\tikzc{
	\drawP{0}{0}{k}
	}} &= 0  ~\mbox{if $2k>n$},
 \\
 \sum_{k=0}^{\floor{n/2}}{~\tikzc{
	\drawP{0}{0}{k}
	}} &= 1, \label{eq:SumBubbles}
 \\
 {\tikzc{
		\drawsquare[k]{0}{0}
		}} &= \begin{cases}
		    \frac{[n+k+1]}{[n+k+2]} &\mbox{{if $n+k\geq 0$}} 
      \\
      0&\mbox{{if $n+k+1\leq 0$}.}
		\end{cases}\label{eq:newrelBox}
\end{align}
    \end{itemize}
\end{defn}

\begin{rem}
    The motivation for \eqref{eq:newrelBox} follows our ansatz, Remark \ref{rem:ansatz}, that in the representation on $\T$, a box $\boxed{k}$ should act as multiplication by the scalar $[n+k+1]/[n+k+2]$  if $n+k\geq 0$. We assign the value zero to all other boxes so that $\Hom_{\Wact'}(n,-)$ is defined as a quotient of $\uWeyl'$. In this way, no invertible box in $\uWeyl'$ is mapped to zero. This assignment of boxes to constants also eases the description of bases of endomorphism categories. 
    
On the other hand, the bubble $\circled{k}$ in region $n$ should act on $\T$ by projecting onto the $W^n_{n-2k}$ isotypic component (see Appendix \ref{sec:app} for a discussion of these modules). If $n-2k < 0$ this module is 0. We therefore set such a morphism to 0 as well. Whereas the sum over these nonzero components is the identity on any module, the sum of nonzero bubbles is $1$. See also Remark \ref{rem:infsum}.
\end{rem}

We use $\tleq[n]$ to indicate that an equality holds in a morphism category $\Hom_{\Wact}(n,m)$ (or $\Hom_{\Wact}(n,-)$ if $m$ is not specified) and we may omit any explicit region labels in the corresponding diagrams.
The example \eqref{eq:examplepslabel} below shows an equality of 2-morphisms between objects in $\Hom_{\Wact}(n,n+1)$ as region labeled diagrams and equivalently using $\tleq[n]$ to specify the rightmost region label. The equality itself is determined by \eqref{eq:TraceE} in the definition of $\uWeyl'$:
\begin{align}\label{eq:examplepslabel}
	[2]~{\tikzc{ 
			\ncap{-2}{6}\up[top>]{2}{6}
			\down[bottom<]{-2}{4}
			\vseg{-2}{2}\cupcap{0}{2}
			\ncup{-2}{2}\up{2}{0}
			\draw (3em, 0.5em) node[] {$n$};
			\draw (-0.5em, 4.5em) node[] {$n+2$};
			\draw (0em, 0.5em) node[] {$n+1$};
	}}
	~=~
	{\tikzc{
			\vseg{0}{-2}\vseg{0}{0}\vseg[top>]{0}{2}
			\draw (1em, 1em) node[] {$n$};
			\draw (-2em, 1em) node[] {$n+1$};
	}}
&&
\mbox{or equivalently}
&&
[2]~{\tikzc{ 
		\ncap{-2}{6}\up[top>]{2}{6}
		\down[bottom<]{-2}{4}
		\vseg{-2}{2}\cupcap{0}{2}
		\ncup{-2}{2}\up{2}{0}
}}
~\tleq[n]~
{\tikzc{
		\vseg{0}{-2}\vseg{0}{0}\vseg[top>]{0}{2}
}}
\end{align}

For each $n\geq0$, let $\mathbbm{1}_n\in \Hom_{\Wact}(n,n)$ denote the tensor unit of $\Hom_{\Wact}(n,n)$ and $\eone$ its algebra of endomorphisms in $2\Hom_{\Wact}(\mathbbm{1}_n,\mathbbm{1}_n)$. As a consequence of the construction of $\Wact$, $\End_{\Wact}(\mathbbm{1}_n)$ is described as the quotient of $\End_{\uWeyl}(\mathbbm{1})$ by the relations \eqref{eq:newrel}-\eqref{eq:newrelBox}.

\begin{rem}\label{rem:1Weyl}
    For $n\geq0$, let $G_n:\End_{\uWeyl}(\mathbbm{1})\to \eone$ be the quotient map determined by the relations \eqref{eq:newrel}-\eqref{eq:newrelBox}. By definition $\eone=\operatorname{im}(G_n)$. 
\end{rem}

\begin{rem}\label{rem:Exceptional}
    Many terms in relation \eqref{eq:MoveBubbles} vanish if $n-2k+2=0$, leaving the relation
\begin{align*}
 	\tikzc{
 		\up[top>]{0}{5.25}\up[top>]{3.5}{5.25}
 		\widecupcap{0}{0}{1.75}
 		\vseg{0}{-2}\vseg{3.5}{-2}
 		\drawP{1.75}{5.5}{k-1}
 		\drawP{0}{2.75}{k}
   \drawP{4}{2.75}{k-1}
 	}
  ~\tleq[n]~\tikzc{
 		\up[top>]{0}{0}\up{3.5}{0}
 		\vseg{0}{-2}\vseg{3.5}{-2}
 		\drawP{-1}{0}{k}
 		\drawP{1.75}{0}{k-1}
 		\drawP{5.25}{0}{k-1}
 	} && \mbox{for $n-2(k-1)=0$.}
 \end{align*}
\end{rem}

\subsection{Isomorphisms in $\Hom_{\Wact}(n,m)$}\label{sec:2WeylIsos}
We discuss the isomorphisms which are introduced in the hom-categories $\Hom_{\Wact}(n,m)$ in comparison to those of $\uWeyl$ stated in Section \ref{ssec:WeylIsos} and how some isomorphisms in $\uWeyl$ become trivialized. The assignment of $\boxed{k}\tleq[n]\dfrac{[n+k+1]}{[n+k+2]}$ for $n+k\geq0$ means that boxes appearing in relations are invertible, and can be used to construct isomorphisms which are not present in $\uWeyl$. Relation \eqref{eq:newrel} implies a number of diagrams which are
present in the defining relations of $\uWeyl$ but 
vanish in certain hom-categories of $\Wact$. Many of these follow from the basic relations 
\begin{align*}
    \tikzc{
    \drawP{0}{0}{k+1}\drawP{3.5}{0}{k}\vseg{2}{-2}\up[top>]{2}{0}
    }~\tleq[2k]0,
    &&
    \tikzc{
    \drawP{0.5}{0}{k}\drawP{3.5}{0}{k}\vseg{2}{0}\down{2}{-2}
    }~\tleq[2k]0.
\end{align*}
Or more simply, $\circled{k+1}\tleq[n+1]0$ and $\circled{k}\tleq[n-1]0$ when $n=2k$. In such instances the corresponding objects of the Karoubi completion $C_k$ are also zero.\\

For example, relations in Remark \ref{rem:DownKUp}, \eqref{eq:CWCircles}, and \eqref{eq:newrelBox} imply the isomorphism $C_k\cong C_kQ_+C_kQ_-C_k$ in $\Hom_{\Wact}(n,n)$ for $n>2k$. However if $n=2k$, then $C_kQ_+C_kQ_-C_k\cong 0$ as $\circled{k}\tleq[n-1]0$, but $C_k\not\cong 0$ in $\Hom_{\Wact}(n,n)$. At this critical value, boxes $\boxed{-2k-1}$ which appear in the relevant relations are also mapped to zero and so the isomorphism fails in this case. If $n<2k$, then both $C_k$ and $C_kQ_+C_kQ_-C_k$ are zero in $\Hom_{\Wact}(n,n)$ and the isomorphism becomes trivial. The various isomorphisms dependent on the value of $n$ relative to $k$ are summarized in Table \ref{tab:isos}.

\begin{prop}\label{prop:IsosTable}
    The isomorphisms in Table \ref{tab:isos} hold in $\Hom_{\Wact}(n,-)$.
\end{prop}
\begin{table}[h!]
\[\begin{array}{c|c|c}
   \mbox{Above critical values}
   &  \mbox{At critical values} & \mbox{Critical values}
   \\\hline
   C_kQ_+\cong C_kQ_+C_{k}\oplus C_kQ_+C_{k-1}  & C_kQ_+\cong C_kQ_+C_{k-1}& n=2k-1\, 
   \\
   &0\cong 0&n<2k-1
   \\
   \hdashline
   Q_+C_k\cong C_kQ_+C_k\oplus C_{k+1}Q_+C_k & Q_+C_k\cong C_kQ_+C_k & n=2k
   \\
   & 0\cong 0 &n<2k
   \\
   \hdashline
   C_kQ_-\cong C_kQ_-C_{k}\oplus C_kQ_-C_{k+1}  & C_kQ_-\cong C_kQ_-C_{k}& n=2k+1\, 
   \\
   &0\cong 0&n<2k+1
   \\
    \hdashline
   Q_-C_k\cong C_kQ_-C_k\oplus C_{k-1}Q_-C_k & Q_-C_k\cong C_kQ_-C_k & n=2k
   \\
   & 0\cong 0 &n<2k
   \\
   \hdashline
    C_k Q_- C_k Q_+ C_k\cong C_k & 0\cong 0 &n<2k
    \\
    \hdashline
    C_{k-1} Q_- C_k Q_+ C_{k-1}\cong C_{k-1}
   & \text{none\footnotemark} & n=2(k-1)
   \\
   & 0\cong 0 & n<2(k-1)
   \\
   \hdashline
   C_k\cong C_kQ_+C_kQ_-C_k &\text{none}&n=2k
   \\
   & 0\cong0 &n<2k
   \\
   \hdashline
   C_k\cong C_kQ_+C_{k-1}Q_-C_k & 0\cong 0 & n<2k
   \\
   \hdashline
   \kimage[k+1] Q_{+} \kimage[k] Q_+ \kimage[k]
		\cong \kimage[k+1] Q_{+} \kimage[k+1] Q_+ \kimage[k]
  & \text{none} & n=2k
  \\ & 0\cong 0 & n<2k
  \\
  \hdashline
  \kimage[k] Q_{-} \kimage[k+1] Q_- \kimage[k+1]
		\cong \kimage[k] Q_{-} \kimage[k] Q_- \kimage[k+1] & \text{none} & n=2(k+1)
  \\
  & 0\cong 0 & n<2(k+1)
\end{array}\]
\caption{``Generic'' isomorphisms in $\Hom_{\Wact}(n,-)$ and their modification at critical values.}
    \label{tab:isos}
\end{table}
\footnotetext{The relation satisfied by the morphism in \eqref{eq:downKup}, which witnesses the isomorphism generically, becomes $0=0$ when $n=2(k-1)$. Moreover, $C_{k-1} Q_- C_k Q_+ C_{k-1}\cong 0$ and $C_{k-1}\not\cong 0$ and are therefore not isomorphic.}
\begin{proof}
    The most interesting relation in the table is $\kimage[k+1] Q_{+} \kimage[k] Q_+ \kimage[k]
		\cong \kimage[k+1] Q_{+} \kimage[k+1] Q_+ \kimage[k]$, which we will prove directly for $n>2k$. The isomorphism $\kimage[k+1] Q_{-} \kimage[k] Q_-\kimage[k]
		\cong \kimage[k+1] Q_{-} \kimage[k+1] Q_- \kimage[k]$ is a consequence of rotating the diagrams.
    The proof of other relations in the table are straightforward to verify from Section \ref{ssec:WeylIsos} or the relations from \eqref{eq:CCWCircles}, \eqref{eq:CWCircles}, \eqref{eq:downKup}, and Remark \ref{rem:DownKUp} with \eqref{eq:newrelBox}. For the critical values of $k$ relative to $n$, certain morphisms will vanish because of \eqref{eq:newrel} and \eqref{eq:newrelBox}.

    To prove $\kimage[k+1] Q_{+} \kimage[k] Q_+ \kimage[k]
		\cong \kimage[k+1] Q_{+} \kimage[k+1] Q_+ \kimage[k]$ in $\Hom_{\Wact}(n,n+2)$ for $n>2k$, we show that the maps
	\begin{align*}
		[2]~
		\tikzc{
			\up[top>]{0}{5.25}\up[top>]{3.5}{5.25}
			\widecupcap{0}{0}{1.75}
			\vseg{0}{-2}\vseg{3.5}{-2}
			\drawP{1.75}{5.5}{k+1}
			\drawP{-1}{2.75}{k+1}
			\drawP{3.5}{2.75}{k}
			\drawP{1.75}{0}{k}
		}~
		&&\mbox{and}&&
		[2]~
		\tikzc{
			\up[top>]{0}{5.25}\up[top>]{3.5}{5.25}
			\widecupcap{0}{0}{1.75}
			\vseg{0}{-2}\vseg{3.5}{-2}
			\drawP{1.75}{5.5}{k}
			\drawP{-1}{2.75}{k+1}
			\drawP{3.5}{2.75}{k}
			\drawP{1.75}{0}{k+1}
		}
	\end{align*}
	witness the desired isomorphism up to a normalization by boxes. By \eqref{eq:AlternatingCupCaps}, \eqref{eq:PThroughUp}, and \eqref{eq:MoveBubbles} their compositions are 
	\begin{align*}
		[2]^2~
		\tikzc{
			\widecupcap{0}{5.5}{1.75}
			\up[top>]{0}{11}\up[top>]{3.5}{11}
			\widecupcap{0}{0}{1.75}
			\vseg{0}{-2}\vseg{3.5}{-2}
			\drawP{1.75}{11}{k}
			\drawP{-1}{8.5}{k+1}
			\drawP{3.5}{8.5}{k}
			\drawP{1.75}{5.5}{k+1}
			\drawP{-1}{2.75}{k+1}
			\drawP{3.5}{2.75}{k}
			\drawP{1.75}{0}{k}
		}	
		~\tleq[]
		[2]~\tikzc{
			\up[top>]{0}{5.5}\up[top>]{3.5}{5.5}
			\widecupcap{0}{0}{1.75}
			\vseg{0}{-2}\vseg{3.5}{-2}
			\drawP{1.75}{5.5}{k}
			\drawP{-1}{2.75}{k+1}
			\drawP{3.5}{2.75}{k}
			\drawP{1.75}{0}{k}
			\widercup{5}{2.75}{2}
			\widecap{5}{2.75}{2}
			\drawP{7}{2.75}{k+1}
		}
		~=~
		\tikzc{
			\up[top>]{0}{0}\up{2}{0}
			\vseg{0}{-2}\vseg{2}{-2}
			\drawP{-1.75}{0}{k+1}
			\drawP{1}{0}{k}
			\drawP{3}{0}{k}
			\drawsquare[-2k-1]{7}{0}
		}
		~\left(~\tikzc{\drawsquare[-2k]{9}{0}}~\right)^{\dagger}~
	\end{align*}
	\begin{align*}
		[2]^2~
		\tikzc{
			\widecupcap{0}{5.5}{1.75}
			\up[top>]{0}{11}\up[top>]{3.5}{11}
			\widecupcap{0}{0}{1.75}
			\vseg{0}{-2}\vseg{3.5}{-2}
			\drawP{1.75}{11}{k+1}
			\drawP{-1}{8.5}{k+1}
			\drawP{3.5}{8.5}{k}
			\drawP{1.75}{5.5}{k}
			\drawP{-1}{2.75}{k+1}
			\drawP{3.5}{2.75}{k}
			\drawP{1.75}{0}{k+1}
		}
		~=
		[2]~\tikzc{
			\up[top>]{0}{5.5}\up[top>]{3.5}{5.5}
			\widecupcap{0}{0}{1.75}
			\vseg{0}{-2}\vseg{3.5}{-2}
			\drawP{1.75}{5.5}{k+1}
			\drawP{-1}{2.75}{k+1}
			\drawP{3.5}{2.75}{k}
			\drawP{1.75}{0}{k+1}
			\rcup{5}{2.75}
			\ncap{5}{2.75}
			\drawP{6}{2.75}{k}
		}
		~=~
		\tikzc{
			\up[top>]{0}{0}\up{3.5}{0}
			\vseg{0}{-2}\vseg{3.5}{-2}
			\drawP{-1.75}{0}{k+1}
			\drawP{1.75}{0}{k+1}
			\drawP{4.5}{0}{k}
			\drawsquare[-2k-1]{8}{0}
		}
		~\left(~\tikzc{\drawsquare[-2k]{9}{0}}~\right)^{\dagger}~
	\end{align*}
	The boxes appearing in the simplified expressions are invertible for $n>2k$. Thus, the above maps can be normalized, which proves the isomorphism. If $n=2k$, then $\kimage[k+1] Q_{+} \kimage[k+1] Q_+ \kimage[k]\cong 0$ and $\kimage[k+1] Q_{+} \kimage[k] Q_+ \kimage[k]\not\cong 0$. Both objects are isomorphic to $0$ if $n<2k$.
\end{proof}

\begin{prop}\label{prop:WeylIson}
    The isomorphism $Q_{-+}\cong Q_{+-}\oplus C_0$ holds in $\Hom_{\Wact}(n,n)$ for any $n\geq 0$.
\end{prop} 
\begin{proof}
    The proof of Proposition \ref{prop:WeylIso} is unchanged for each $n$.
\end{proof}

\begin{rem}\label{rem:asymp}
     In each isomorphism of Table \ref{tab:isos}, if $n$ is sufficiently large $n>2K$, where $K$ is the largest index of $C_k$ in the isomorphism, then the ``generic form'' of the isomorphism holds in $\Hom_{\Wact}(n,-)$. 
\end{rem}

\section{Action of $\Wact$ on $\T$}\label{sec:Wactsacts}

We recall the notion of a 2-categorical representation in the sense of \cite{LS13}. Let $\mathcal{X}= \bigoplus_{m\in M} \mathcal{X}_m$ be an $M$-graded additive category, where $M$ is a monoid. The collection of endofunctors on $\mathcal{X}$ naturally forms a 2-category whose objects are the elements of $M$, whose 1-morphisms from $n$ to $m$, for $n,m\in M$, are the functors from $\mathcal{X}_n$ to $\mathcal{X}_m$, and whose 2-morphisms are natural transformations. Denote such an endofunctor category by $\End(\mathcal{X})$. If $\mathcal{C}$ is a category, then a functor $\rho:\mathcal{C}\to\End(\mathcal{X})$ is called a \emph{representation of $\mathcal{C}$ on $\mathcal{X}$} and we say that $\mathcal{C}$ acts on $\mathcal{X}$. In this section, we construct a representation of $\Wact$ on the category of Temperley-Lieb modules $\T := \bigoplus_{n\geq-1} \mathrm{TL}_n\mathrm{-mod}$, with the convention that $\TL_{-1}\mathrm{-mod}$ is a null object denoted $\nabla$.

Every $(\TL_m,\TLn)$-bimodule $M$ determines a functor
\begin{align}
	M\otimes-:\TL_n\mathrm{-mod}\to \TL_m\mathrm{-mod} &&
	V\mapsto M\otimes V
\end{align}
and any homomorphism $M_1\to M_2$ of $(\TL_m,\TLn)$-bimodules gives  a natural
transformation between the corresponding functors. In this way, a representation of $\Wact$ is an assignment of bimodule maps to diagrams.

For $k,l\leq n$, let $\mod{k}{n}{l}$ denote $\TLn$ as a $(\TL_k,\TL_l)$-bimodule. Tensoring the bimodules $\mod{n+1}{n+1}{n}$ and $\mod{n-1}{n}{n}$ correspond to induction and restriction functors on $\T$, respectively. 

We now propose a representation $\rho':\Wact'\to\End(\T)$ which makes the following assignments.
\begin{itemize}
	\item On objects, $\rho'(n)=n$ for $n\geq0$ and $\rho'(\nabla)=\nabla$. 
	
	\item The functor $\rho'$ maps the object 0 in each hom-category to 0. Each 1-morphism $Q_{\epsilon}\in \Hom_{\Wact}(n,n+\|\epsilon\|)$ with $\epsilon=\epsilon_\ell\dots\epsilon_1$ is mapped by $\rho'$ to a sequence of induction and restriction bimodules, with $+$ mapping to induction and $-$ mapping to restriction. 
	
	\item On 2-morphisms, $\rho'$ makes the following assignments\footnote{We use a notational shorthand $[x \mapsto \ast]: X \to Y$ to describe a function $f:X \to Y$ defined by a formula $f(x) = \ast$. } on elementary diagrams
	\begin{align*}
		{\tikzc{\rcap{0}{0}}}~&\tleq[n]
		\left[x\otimes y\mapsto xy\right]:
		\mod{n}{n}{n-1}\mod{}{n}{n}
		\to \mod{n}{n}{n},
		\\
		{\tikzc{\rcup{0}{1}}}~&\tleq[n]
		\left[x\mapsto x\right]:
		\mod{n}{n}{n}\to \mod{n}{n+1}{n+1}\mod{}{n+1}{n},
		\\
		{\tikzc{\lcap{0}{0}}}~&\tleq[n]
		\left[x\mapsto \ptr_{n+1}(x) \right]:
		\mod{n}{n+1}{n+1}\mod{}{n+1}{n}\to \mod{n}{n}{n}, 
		\\
		{\tikzc{\lcup{0}{1}}}~&\tleq[n]
		\left[x\mapsto \sum_{\substack{p,r\in P^n\\p_n=r_n}}
		c_r e_{p,r} \otimes e_{r,p}x\right]:
		\mod{n}{n}{n}\mapsto \mod{n}{n}{n-1}\mod{}{n}{n},
		\\
		\tikzc{
			\uucup{0}{4}
			\nncap{0}{0}
			}
		~&
		\tleq[n]
		\left[x\mapsto xe_{n+1}\right]:
		\mod{n+2}{n+2}{n}\to\mod{n+2}{n+2}{n},
	\end{align*}
\end{itemize}
where the map $x\mapsto \ptr_{n+1}(x)$ is the right partial trace of $x$ as defined in \eqref{def:ptr} and $\sum_{\substack{p,r\in P^n\\p_n=r_n}}
c_r e_{p,r} \otimes e_{r,p}$ is as described in Section \ref{ssec:Qdiagrams}. 

\begin{rem}\label{rem:tleq}
    In a slight abuse of notation, we use $\tleq[n]$ to denote an equality in a hom-category $\Hom_{\End(\T)}(n,-)$, when it is clear that we are considering an equality of morphisms under $\rho'$. Its use here is similar to that in $\Wact$ where $\tleq[n]$ indicates the rightmost region label on diagrams is $n$ and establishes that the equality is between natural transformations of functors whose sources are $\TLn\mathrm{-mod}$. This convention fits with our ansatz in Remark \ref{rem:ansatz}.
    
    The notation $\tleq[n]$ will typically be reserved for giving an explicit description of an action of a diagram on $\TLn\mathrm{-mod}$. In addition, we will use $\tleq$ if an equality $\tleq[n]$ holds for all $n\geq0$. Typically, this will be used for equalities between the images of diagrams under $\rho'$.  
\end{rem}

Since $\End(\T)$ is idempotent complete, the proposed functor $\rho'$ extends to a functor $\rho:\Wact\to\End(\T)$. In Corollary \ref{cor:isotypic}, each object $\kimage[k]$ is shown to map to $P_k$, the functor which projects onto the $W^n_{n-2k}$ isotypic component of any $\TLn$-module. Here $W^n_{n-2k}$ is an irreducible $\TLn$ defined in Appendix \ref{sec:app}.

\begin{thm}\label{thm:rep}
	The functor $\rho$ is well-defined and determines a representation of $\Wact$ on $\T$.
\end{thm}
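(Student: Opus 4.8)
Here $\rho$ is obtained from the $2$-functor $\rho'$ by idempotent completion, so since $\End(\T)$ is idempotent complete it suffices to prove that $\rho'\colon\Wact'\to\End(\T)$ is well defined; the extension to $\rho$ is then automatic and no further relations need to be checked. To see that $\rho'$ is well defined we must verify two things: first, that the formulas assigned to the elementary diagrams are genuine $\TL$\nobreakdash-bimodule maps (independent of the chosen coset representatives, and bilinear); and second --- the substance of the proof --- that $\rho'$ sends every defining relation of $\Weyl'$, read in the appropriate region\nobreakdash-labelled hom\nobreakdash-category of $\Wact'$, to a true identity of bimodule maps. One goes through the relations in groups, using the structure theory of the Temperley--Lieb tower from Section~\ref{sec:TL} (the quasi\nobreakdash-idempotents $\ckme_i$, the partial trace, the Jones--Wenzl idempotents and the formula $\ptr_n(f^{(n)})=\tfrac{[n+1]}{[n]}f^{(n-1)}$) together with the path/matrix\nobreakdash-unit description $\{e_{p,r}\mid p,r\in P^n\}$ and the constants $c_r$ from Subsection~\ref{ssec:Qdiagrams}.

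The first step is to pin down the two basic dictionary entries that control everything. The box $\circled{k}$ acts on $\mod{n}{n}{n}$ by the scalar $[n+k+1]/[n+k+2]$; for $n\geq 0$, $k\geq 0$ and $q$ not a root of unity this scalar lies in $\bC^\times$, so $\rho'(\circled{k})$ is invertible, and a one\nobreakdash-line computation with $[2][m]=[m+1]+[m-1]$ shows $\rho'(\circled{k}^{\dagger})=[2]-\tfrac{[n+k]}{[n+k+1]}=\tfrac{[n+k+2]}{[n+k+1]}$, which is the inverse scalar; this verifies \eqref{eq:BoxInverse} and the box relations in \eqref{eq:BubbleBoxRelations}. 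Next, using the recursive definitions \eqref{eq:DefZeroBubble}--\eqref{eq:BubbleDefs}, the partial\nobreakdash-trace formula for $f^{(n)}$, and the diagrammatics of $\cup_n f^{(n+1)}\cap_n$, one identifies $\rho'(\circled{k})$ with the projection of $\TL_n$ onto the isotypic block of the simple $\TL_n$\nobreakdash-module with $n-2k$ through\nobreakdash-strands; in particular $\rho'(\circled{k})=0$ exactly when $n-2k<0$, so $\rho'$ descends through the quotient $\Wtc_0'$, and the bookkeeping already performed in the proof that $\Wact'$ is well defined shows that every box occurring in a relation in a region where it is undefined is multiplied there by a bubble that $\rho'$ kills. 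This same identification is what makes the two expressions for $\circled{k}$ in \eqref{eq:BubbleDefs} consistent under $\rho'$, and it handles the bubble orthogonality $\circled{k}\circled{m}=\delta_{km}\circled{k}$ in \eqref{eq:BubbleBoxRelations} (projections onto distinct isotypic blocks).

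With this in place the remaining relations fall into manageable families. The duality relations \eqref{eq:duality} and their reflections are the zig\nobreakdash-zag identities for the biadjunction $(\Ind,\Res)$ on the Temperley--Lieb tower, recalled after Quinn in the Appendix. The circle evaluations \eqref{eq:CWCircles} and \eqref{eq:CCWCircles} reduce, via $\ckme_n' x\ckme_n'=[2]\,\ckme_n'\ptr_n(x)$ and the partial\nobreakdash-trace formula, to equalities of scalars of the form $[n+k+1]/[n+k+2]$ --- this is precisely where the dependence on the region label $n$ becomes essential. The relations \eqref{eq:AlternatingCupCaps}, \eqref{eq:TraceE} and \eqref{eq:DoubledRelations} use $\rho'$ of the disoriented cupcap, namely $x\mapsto x\ckme_{n+1}$, together with $(\ckme_i')^2=[2]\ckme_i'$ and $\ckme_i'\ckme_{i\pm1}'\ckme_i'=\ckme_i'$ and $\rho'$ of the $\lefte,\righte$ diagrams. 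The sliding relation \eqref{eq:PThroughUp} expresses that induction shifts the region label by one: the pullback along $\Ind$ of a single isotypic projection is supported on exactly the two neighbouring through\nobreakdash-strand blocks (the branching rule $\Res L\cong L'\oplus L''$, $\Ind L\cong L'\oplus L''$ for the TL tower, with a summand dropped at the boundary where the $\circled{k}=0$ convention for $k<0$ takes over), giving the two terms on the right; and the box slide in \eqref{eq:BubbleBoxRelations} is the index shift $k\mapsto k+1$ that compensates the change $n\mapsto n+1$ in the box scalar.

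The main obstacle will be the two ``curl'' relations \eqref{eq:downKup} and, above all, \eqref{eq:MoveBubbles}: these are long identities of bimodule maps that genuinely mix isotypic projections (bubbles), scalar multiplications with poles (boxes and their adjoints) and the matrix units $e_{p,r}$ with the coefficients $c_r$. The plan for these is to expand both sides in the path basis $\{e_{p,r}\}$, evaluate the $\olcup/\olcap$ insertions by the explicit formula defining $\rho'(\olcup)$, collapse the resulting cups and caps using the TL and Jones--Wenzl relations of Section~\ref{sec:TL}, and then compare coefficients one through\nobreakdash-strand block at a time. The apparent poles in the box coefficients never actually occur: in each block where a denominator would vanish, the accompanying bubble in the same term is zero on that block --- the very bookkeeping already carried out in the lemma that $\Wact'$ is well defined. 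Once all the defining relations \eqref{eq:duality}--\eqref{eq:MoveBubbles}, together with the notational definitions \eqref{eq:DefZeroBubble}--\eqref{eq:BubbleDefs} and the box\nobreakdash-adjoint formula (whose $\rho'$\nobreakdash-images are forced), have been checked, $\rho'$ is a well\nobreakdash-defined $2$\nobreakdash-functor on $\Wact'$; idempotent completing both sides yields $\rho\colon\Wact\to\End(\T)$ and hence the desired representation.
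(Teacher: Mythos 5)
Your overall strategy coincides with the paper's: reduce to $\rho'$ via idempotent completeness of $\End(\T)$, then check every defining relation of $\Weyl'$, read in the region-labelled hom-categories of $\Wact'$, as an identity of Temperley--Lieb bimodule maps, using the path/matrix-unit basis $e_{p,r}$, Jones--Wenzl idempotents and partial traces; your remark that apparent poles are always killed by an accompanying bubble is exactly the bookkeeping the paper invokes from the lemma that $\Wact'$ is well defined. Section \ref{sec:Wactsacts} of the paper is precisely this verification, organized into the same families of relations you list.

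The gap is that the verifications carrying essentially all of the content are announced rather than performed. You write that the ``main obstacle will be'' \eqref{eq:downKup} and \eqref{eq:MoveBubbles} and describe a plan (expand in the path basis, compare coefficients block by block), but you do not execute it; the same is true of the identification of $\circled{k}$ with the projection onto the $W^n_{n-2k}$-isotypic component, which is not a formal consequence of the definitions --- in the paper it rests on Quinn's Prop 5.2.5 plus a nontrivial sum over paths showing the curl acts by $1-f^{(n+1)}$, followed by the induction of Corollary \ref{cor:isotypic}. Likewise the nested-circle identities (Lemma \ref{lem:nestedk}) and the cupcap/bubble exchanges (Lemmas \ref{lem:kek} and \ref{lem:kp1ek}) each require their own explicit computation with the coefficients $c_r$ and the relation of Remark \ref{rem:idemp}, and only then does \eqref{eq:MoveBubbles} follow by combining them with \eqref{eq:PThroughUpN}. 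Your block-by-block plan is morally this decomposition (composing with $\circled{k-1}$, $\circled{k}$, $\circled{k+1}$ recovers the three constituent identities), so I see no step that would fail; but as submitted the proposal is an outline of the paper's proof rather than a proof, and the theorem is not established until those path-basis computations are actually carried out.
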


It is sufficient to prove the claim for $\rho'$ by showing that the defining relations on 2-morphisms in $\Wact$ hold when interpreted as maps of Temperley-Lieb bimodules according to the assignments above. Therefore, the following corollary is a tautological consequence.

\begin{cor}\label{cor:IsoInAction}
    The images of $Q_+$, $Q_-$, and $C_k$ under $\rho$ are consistent with the isomorphsisms described in Table \ref{tab:isos} and Proposition \ref{prop:WeylIson}.
\end{cor}

\begin{rem}\label{rem:IsoInAction}
    Corollary \ref{cor:IsoInAction} can also be proven by explicitly computing the induction, restriction, and projection functors on simple modules in $\TLn\mathrm{-mod}$. For an irreducible $\TLn$ representation $W^n_m$, the functors $\Ind$, $\Res$, and $P_k$ act as 
    \begin{align}
        \Ind(W^n_m)&\cong W^{n+1}_{m+1}\oplus \overline{\delta}_{m,0}W^{n+1}_{m-1},
        \\
        \Res(W^n_m)&\cong \overline{\delta}_{m,n} W^{n-1}_{m+1}\oplus \overline{\delta}_{m,0}W^{n-1}_{m-1},
        \\
        P_k(W^n_m)&\cong \delta_{m,n-2k}W^n_{n-2k}
    \end{align}
    from Appendix \ref{ssec:RepTLn}. We use the convention that $W^n_m=0$ if $m<0$, $m>n$, or $m\not\equiv n \operatorname{ mod }2$. We write $\overline{\delta}_{ij}\coloneq 1-\delta_{ij}$ to emphasize these conventions and to avoid confusion when composing these functors. With this presentation of induction, restriction, and projection it is straightforward to verify the isomorphisms in Table \ref{tab:isos} and Proposition \ref{prop:WeylIson}.\\
    
    It is important to remark that an instance of ``none'' in the table corresponds to an instance where the generic isomorphism implies a relation between a zero and a nonzero object, as is the case in $\Wact$. For example, if $n=2(k-1)$ so that $n+1-2k=-1$, then
    \begin{gather*}
        P_{k-1}(W^n_0)\cong W^n_0\,,\\
        P_{k-1} \circ \Res \circ P_k \circ\Ind \circ P_{k-1}(W^n_0)\cong 
        P_{k-1} \circ \Res \circ P_k (W^{n+1}_1)\cong 0\,.
    \end{gather*}
    Hence, there is no isomorphism between $P_{k-1}$ and $P_{k-1} \circ \Res \circ P_k \circ\Ind \circ P_{k-1}$.
\end{rem}

We begin the proof of Theorem \ref{thm:rep} by noting that some relations were proven in \cite{Quinn}, see Section \ref{ssec:Qdiagrams}. These include the isotopy relation and the first relations in \eqref{eq:TraceE} and \eqref{eq:CCWCircles}. We proceed by first investigating the action of diagrams which are generated only by Temperley-Lieb generators. In the representation, it is enough to show that the relations in $\uWeyl'$ hold for all choices of rightmost region labels $n\geq0$. The proof of the theorem is complete once the image of all relations in $\uWeyl'$ mapped to $\Hom_{\Wact}(n,m)$ and interpreted as equalities of bimodule maps (i.e. 2-morphisms in $\End(\T)$) are verified for all rightmost region labels (i.e. 2-objects in $\End(\T)$). 

\subsection{Action of Temperley-Lieb Generators}
The first relation in \eqref{eq:AlternatingCupCaps} as applied to $\mod{n+3}{n+3}{n}$ follows from the Temperley-Lieb relation $e_{n+2}'e_{n+1}'e_{n+2}'=e_{n+2}'$. Similarly,  $e_{n+1}'e_{n+2}'e_{n+1}'=e_{n+1}'$ implies the reflection of \eqref{eq:AlternatingCupCaps}. The second relation in \eqref{eq:AlternatingCupCaps} and its reflection are proven in the lemma below.

\begin{lem}
	\label{lem:RepAlternating}
	The following relations hold under the functor $\rho'$:
	\begin{align*}
		[2]^2~\tikzc{
			\cupcap{0}{0}
			\vseg[bottom<]{4}{0}
			\vseg{4}{2}
			\ncap{2}{4}
			\ncup{2}{8}
			\up[top>]{0}{4}
			\vseg{0}{6}
			\uucup{0}{12}\nncap{0}{8}
			\vseg{4}{8}
			\vseg[bottom<]{4}{10}
		}
		~\tleq~
		[2]~\tikzc{
			\uucup{0}{4}\nncap{0}{0}
			\vseg[bottom<]{4}{0}
			\vseg{4}{2}
		}
		&&\mbox{and}&&
		[2]^2~\tikzc{
			\cupcap{0}{0}
			\vseg[bottom<]{-2}{0}
			\vseg{-2}{2}
			\ncap{-2}{4}
			\ncup{-2}{8}
			\up[top>]{2}{4}
			\vseg{2}{6}
			\uucup{0}{12}\nncap{0}{8}
			\vseg{-2}{8}
			\vseg[bottom<]{-2}{10}
		}
		~\tleq~
		[2]~\tikzc{
			\uucup{0}{4}\nncap{0}{0}
			\vseg[bottom<]{-2}{0}
			\vseg{-2}{2}
		}
	\end{align*}
\end{lem}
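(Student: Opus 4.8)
The plan is to verify each of the two identities by applying $\rho'$ to every elementary piece of the two diagrams, composing to obtain explicit maps of Temperley--Lieb bimodules, and then checking equality of those maps using relations in the algebras $\TL_n$.

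First I would fix the rightmost region label $n\geq 0$ and read off, from the region-labelling rules, the common source and target of the two sides: they are parallel $2$-morphisms between the same pair of $1$-morphisms, hence maps between the same concrete $(\TL_\bullet,\TL_\bullet)$-bimodule, built out of $\TL_{n+1}$ (or $\TL_{n+2}$), $\TL_n$, and the standard inclusions. Then I would decompose the left-hand diagram into its generators — two cupcaps with a doubled strand, together with an oriented cup and an oriented cap forming a zigzag across the solid strands between them — and substitute the formulas defining $\rho'$: a disoriented cupcap becomes right multiplication by a Temperley--Lieb idempotent $e_j$; the oriented cap $\rcap$ becomes multiplication $x\otimes y\mapsto xy$; the oriented cup is the tautological inclusion $x\mapsto x$ in the $\rcup$ case or the sum $x\mapsto\sum_{p_n=r_n}c_r\,e_{p,r}\otimes e_{r,p}x$ of Subsection~\ref{ssec:Qdiagrams} in the $\lcup$ case; and all through-strands contribute identity maps tensored in.

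Next I would collapse the resulting composite. The oriented cup--cap zigzag, sandwiched between the two idempotents, can be simplified using the identity $\ckme_j\,x\,\ckme_j=\ckme_j\,\ptr_j(x)$ together with the partial-trace values recalled in Section~\ref{sec:TL}; after that, what remains is right multiplication by a product $\ckme_{j'}\ckme_j\ckme_{j'}$ of three Temperley--Lieb generators, up to an explicit power of $[2]$ coming from the normalizations $\ckme_i=[2]e_i$ and from the closed loop produced by the partial trace. Applying the Temperley--Lieb relation $\ckme_{j'}\ckme_j\ckme_{j'}=\ckme_{j'}$ then turns the left-hand side into a scalar multiple of right multiplication by $\ckme_{j'}$, which is precisely the right-hand side once the displayed factors $[2]^2$ and $[2]$ are accounted for. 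For the reflected identity I would run the mirror-image computation, in which $j$ and $j'$ switch roles, so that one invokes $\ckme_j\ckme_{j'}\ckme_j=\ckme_j$ and the left partial trace in place of the right one.

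The main obstacle I anticipate is in these middle steps: carefully tracking the bimodule identifications and tensor factors through the composite, and in particular handling the $\lcup$ assignment (the sum over the $e_{p,r}$ with coefficients $c_r$) so that the several powers of $[2]$ — coming from the relation $\ckme_i=[2]e_i$, from the partial traces, and from the coefficients $c_r$ — combine to yield exactly the asymmetry between the $[2]^2$ on the left and the $[2]$ on the right. This is the same mechanism that already delivers the first relation in \eqref{eq:AlternatingCupCaps} from $\ckme_{n+2}\ckme_{n+1}\ckme_{n+2}=\ckme_{n+2}$ applied to $\mod{n+3}{n+3}{n}$, except that here one of the three factors has been replaced by a partial trace; once the zigzag is collapsed, the remainder is a finite, if bookkeeping-heavy, check, carried out uniformly in $n$.
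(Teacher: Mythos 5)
There is a genuine gap, and it is concentrated in your treatment of the second (reflected) identity, though your mechanism for the first is also misstated. The left-hand sides of both identities contain only \emph{two} disoriented cupcaps, separated by an oriented cap--cup zigzag; under $\rho'$ the zigzag pieces are the structural bimodule maps (multiplication, the tautological unit, the partial trace, or the weighted sum $\sum c_r e_{p,r}\otimes e_{r,p}$), not a third Temperley--Lieb generator, and no closed loop is ever produced. So the reduction you propose, ``what remains is right multiplication by $\ckme_{j'}\ckme_j\ckme_{j'}$, then apply $\ckme_{j'}\ckme_j\ckme_{j'}=\ckme_{j'}$,'' is not available: that relation is exactly what proves the \emph{other} (three-cupcap) relation of \eqref{eq:AlternatingCupCaps}, which is handled separately. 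For the first identity the correct engine is the conditional-expectation identity $\ckme_{n+1}\,y\,\ckme_{n+1}=\ptr_{n+1}(y)\,\ckme_{n+1}$ applied to a general bimodule element $y$ (which you do cite); the paper first uses the zigzag/adjunction relations \eqref{eq:Qzigzag} to trade the $\lcup$-type cup (whose image is the complicated sum of matrix units) for an $\olcap$, precisely to sidestep the bookkeeping you flag as your ``main obstacle'' but do not resolve.

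More seriously, the second identity is \emph{not} the mirror-image computation, because $\rho'$ is not mirror-symmetric: a right cap acts by multiplication and a right cup tautologically, while a left cap acts by the partial trace and a left cup by the weighted matrix-unit sum; there is no ``left partial trace'' to substitute, since the only partial trace in the target closes the last strand. Concretely, the reflected diagram is the endomorphism of $\mod{n+1}{n+2}{n}$ sending $x\mapsto \ptr_{n+2}(x\ckme_{n+1})\ckme_{n+1}$, and the claim is that this equals $x\mapsto x\ckme_{n+1}$ for \emph{arbitrary} $x\in\TL_{n+2}$. Neither $\ckme_j\ckme_{j'}\ckme_j=\ckme_j$ nor the conditional-expectation identity applies directly (the latter requires the sandwiched element to lie in the smaller algebra), so an extra input is needed: the paper decomposes $x=x_1+x_2\ckme_{n+1}x_3$ with $x_i\in\TL_{n+1}$ via the Jones basis and then pushes $\ptr_{n+2}$ through term by term. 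Your proposal is missing this idea, and without it (or an equivalent argument) the second half of the lemma does not follow.
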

\begin{proof}
	The relations in \eqref{eq:Qzigzag} imply the adjunction between induction and restriction. Therefore, the first relation is equivalent to
	\begin{align*}
		[2]^2~\tikzc{
			\cupcap{0}{0}
			\vseg[bottom<]{4}{0}
			\vseg{4}{2}
			\ncap{2}{4}
			\lcup{2}{8}
			\ncap{4}{8}
			\vseg{0}{4}
			\vseg{0}{6}
			\uucup{0}{12}\nncap{0}{8}
			\vseg{6}{0}
			\vseg{6}{2}
			\up{6}{4}
			\vseg{6}{6}
		}
		~\tleq~
		[2]~\tikzc{
			\uucup{0}{4}\nncap{0}{0}
			\lcap{4}{0}
		}
	\end{align*}
	Fix $n\geq1$. Then the left side of the above maps $x\otimes y\in \mod{n+2}{n+2}{}\mod{n}{n+1}{n}$ to $xe'_{n+1}ye'_{n+1}=x\ptr_{n+1}(y)e'_{n+1}$ which is exactly the image of $x\otimes y$ under the right morphism.
	
	The second relation is proven directly for $n\geq0$. The left side of the relation determines an endomorphism of the bimodule $\mod{n+1}{n+2}{n}$, mapping $x$ to $\ptr_{n+2}(xe'_{n+1})e'_{n+1}$. Using the Jones basis of the Temperley-Lieb algebra \cite{Ridout14}, $x=x_1+x_2e'_{n+1}x_3$ for some $x_1,x_2,x_3\in\TL_{n+1}$. Then as
	\begin{align*}
		\ptr_{n+2}(xe'_{n+1})e'_{n+1}
		&=(x_1\ptr_{n+2}(e'_{n+1})+x_2\ptr_{n+2}(e'_{n+1}x_3e'_{n+1}))e'_{n+1}
		\\
		&=x_1e'_{n+1}+x_2\ptr_{n+2}(\ptr_{n+1}(x_3))e'_{n+1}
		\\
		&=(x_1+x_2\ptr_{n+1}(x_3))e'_{n+1}
		\\
		&=(x_1+x_2e'_{n+1}x_3)e'_{n+1}=xe'_{n+1}
	\end{align*}
	the claim follows.
\end{proof}
\begin{rem}
	The Temperley-Lieb relation $(e'_{n+1})^2=e'_{n+1}$ implies the diagrammatic relation 
	\begin{align*}
			\tikzc{\nncap{0}{0}\uucup{0}{4}\nncap{0}{4}\uucup{0}{8}}
			~\tleq~
			\tikzc{\nncap{0}{0}\uucup{0}{4}}
	\end{align*}
	It is also implied by Lemma \ref{lem:RepAlternating} by closing the right strand of the diagram and resolving the counter-clockwise oriented circle as $[2]$.
\end{rem}

\subsection{Action of Bubbles} Before discussing additional relations involving boxes, namely those in which boxes are introduced i.e. \eqref{eq:CWCircles}, \eqref{eq:CCWCircles}, \eqref{eq:MoveBubbles}, and \eqref{eq:downKup}; we must first discuss the action of bubbles $\circled{k}$ beginning with the $0$ labeled circle. The latter equality in \eqref{eq:TraceE} follows from our definition of $\circled{0}$. Since $\circled{0}$ appears in the definition of both diagrams in \eqref{eq:DefZeroBubble} and are related to each other by \eqref{eq:BubbleDefs}, we compute both of their actions on $\T$ explicitly to show that $\circled{0}$ is well-defined. Quinn proved in \cite[Prop. 5.2.5]{Quinn} that the left diagram in \eqref{eq:DefZeroBubble} acts as an idempotent endomorphism of the induction bimodule \eqref{eq:Qloops}. Indeed, we show that it acts as multiplication by the Jones-Wenzl idempotent $f^{(n+1)}$ on $\mod{n+1}{n+1}{n}$.

\begin{prop}\label{prop:Consistent}
	In the representation $\rho'$, the definitions involving $\circled{0}$ in \eqref{eq:DefZeroBubble} are consistent with the notation introduced in the second equation of \eqref{eq:BubbleDefs}. That is,
	\begin{align*}
		\tikzc{\up[top>]{0}{0}\vseg{0}{-2}\drawP{0}{0}{0}}
		~=~
		{\tikzc{\up[top>]{0}{0}\vseg{0}{-2}}~-[2]~\tikzc{\vseg[top>]{0}{4}\ncap{2}{4}
				\vseg[bottom<]{4}{2}
				\vseg{4}{0}
				\cupcap{0}{0}\ncup{2}{0}
				\vseg[top>]{0}{-2}
		}}
		~\tleq~ 
		{ \tikzc{
				\widercup{0}{0}{2}
				\widecap{0}{0}{2}
				\ncup{1}{0}
				\rcap{1}{0}
				\drawP{1}{0}{0}
				\up[top>]{5}{0}\vseg{5}{-2}
		}}
		~=~
		\tikzc{\up[top>]{0}{0}\vseg{0}{-2}\drawP{-1}{0}{0}}
	\end{align*}
	and their action on $\mod{n+1}{n+1}{n}$ is multiplication by $f^{(n+1)}$.
\end{prop} 
\begin{proof}
	Quinn showed in Prop 5.2.5 that 
	\begin{align*}
		[2]~\tikzc{\vseg[top>]{0}{4}\ncap{2}{4}
			\vseg[bottom<]{4}{2}
			\vseg{4}{0}
			\cupcap{0}{0}\ncup{2}{0}
			\vseg[top>]{0}{-2}
		}~\tleq[n]\left[x\mapsto x\cdot\sum_{\substack{p,r\in P^n\\p_n=r_n}} c_r e_{p,r}e_n'e_{p,r}\right]:\mod{n+1}{n+1}{n}\to\mod{n+1}{n+1}{n}
	\end{align*}
	Expanding the path $r=(r',r_n)$ in the sum, we have
	\begin{gather*}
		\sum_{0\leq m\leq n}\sum_{p,r\in P^n_m} \frac{[r_{n-1}+1]}{[m+1]}\frac{1}{|P^{n-1}_{r_{n-1}}|}e_{p,r}e_n'e_{p,r}
		\\
		=
		\sum_{0<m\leq n}\sum_{\substack{p\in P^n_m\\ r'\in P^n_{m-1}}} \frac{[m]}{[m+1]}\frac{1}{|P^{n-1}_{m-1}|}v_p f^{(m)}e_m'f^{(m)}\check{v}_p
		+
		\sum_{0\leq m< n}\sum_{\substack{p\in P^n_m\\ r'\in P^n_{m+1}}} \frac{1}{|P^{n-1}_{m+1}|}v_p f^{(m+1)}\check{v}_p
		\\
		=
		\sum_{0\leq m\leq n}\sum_{\substack{p\in P^n_m}} 
		v_p
		\left(
		\frac{[m]}{[m+1]} f^{(m)}e_m'f^{(m)} 
		+
		f^{(m+1)}
		\right)\check{v}_p
		-
		\sum_{\substack{p\in P^n_n}} 
		v_pf^{(n+1)}\check{v}_p
		\\
		=\sum_{0\leq m\leq n}\sum_{\substack{p\in P^n_m}} 
		e_{p,p}
		-
		f^{(n+1)}
		\\
		=1-f^{(n+1)}
	\end{gather*}
	This proves
	\begin{align*}
		\tikzc{\up[top>]{0}{0}\vseg{0}{-2}\drawP{0}{0}{0}}~=~{\tikzc{\up[top>]{0}{0}\vseg{0}{-2}}~-[2]~\tikzc{\vseg[top>]{0}{4}\ncap{2}{4}
				\vseg[bottom<]{4}{2}
				\vseg{4}{0}
				\cupcap{0}{0}\ncup{2}{0}
				\vseg[top>]{0}{-2}
		}}~\tleq[n]
		\left[x\mapsto x\cdot f^{(n+1)}\right]
		:
		\mod{n+1}{n+1}{n}\to\mod{n+1}{n+1}{n}
	\end{align*}
	We use this to compute
	\begin{gather*}
		\tikzc{
			\ncup{1}{0}
			\rcap{1}{0}
			\drawP{1}{0}{0}
		}
		\tleq[n]
		\left[x\mapsto x\cdot
		\sum_{p,r\in P^n}c_re_{p,r}f^{(n)}e_{r,p}
		\right]
		=
		\left[x\mapsto x\cdot
		\frac{[n]}{[n+1]} f^{(n)}
		\right]:\mod{n}{n}{n}\to\mod{n}{n}{n}
	\end{gather*}
	The counterclockwise circle now acts as a partial trace and we obtain the desired equality below
	\begin{gather*}
		{ \tikzc{
				\widercup{0}{0}{2}
				\widecap{0}{0}{2}
				\ncup{1}{0}
				\rcap{1}{0}
				\drawP{1}{0}{0}
				\up[top>]{5}{0}\vseg{5}{-2}
		}}~\tleq[n]\left[x\mapsto x\cdot \ptr_{n+2}\left(\frac{[n+2]}{[n+3]}f^{(n+2)}\right)\right]
		=\left[x\mapsto x\cdot f^{(n+1)}\right]
		:
		\mod{n+1}{n+1}{n}\to\mod{n+1}{n+1}{n}
	\end{gather*}
\end{proof}

In the above proof we observed that 
\begin{gather}
	\tikzc{
		\ncup{1}{0}
		\rcap{1}{0}
		\drawP{1}{0}{0}
	}
	\tleq[n]
	\left[x\mapsto x\cdot
	\frac{[n]}{[n+1]} f^{(n)}
	\right]:\mod{n}{n}{n}\to\mod{n}{n}{n}
\end{gather}
The factor $\frac{[n]}{[n+1]}$, which is given by the action of $\boxed{-1}$ on an $n$ labeled region implies
\begin{gather*}
	\tikzc{
		\ncup{1}{0}
		\rcap{1}{0}
		\drawP{1}{0}{0}
	}
	~\tleq~
	\tikzc{
		\drawP{1}{0}{0}
		\drawsquare[-1]{4}{0}
	}
\end{gather*}
which is a consequence of \eqref{eq:CWCircles}. Notice that $\circled{0}$ acts by $f^{(n)}$. We will return to this relation after considering the action of more general bubbles $\circled{k}$.

\begin{lem}
	\label{lem:DoubledRelations}
	In the representation $\rho'$, the following relation holds
	\begin{align*}
		\tikzc{
			\ddcup{0}{9}\uucup{4}{9}
			\Widercup{1}{7}{2}
			\ddcap{0}{0}\uucap{4}{0}
			\Widelcap{1}{2}{2}}
		~\tleq~
		\tikzc{
			\ddcap{0}{0}\ddcup{0}{4}
			\uucap{4}{0}\uucup{4}{4}
		}
	\end{align*}
\end{lem}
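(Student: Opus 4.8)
The plan is to verify the relation directly as an identity of Temperley--Lieb bimodule maps, uniformly in the rightmost region label $n\geq 0$ (every intermediate region occurring in the diagram is $\geq n$, so no $\nabla$ subtlety arises and everything takes place over $\TL_{n+2}$). Writing $\epsilon=++--$, the $1$-morphism $\rho'(Q_\epsilon)$ has underlying bimodule $M=\TL_{n+2}$, regarded as a $(\TLn,\TLn)$-bimodule via $\TLn\hookrightarrow\TL_{n+2}$ (it implements $\Res^2\circ\Ind^2$), and both sides of the first relation in \eqref{eq:DoubledRelations} are endomorphisms of $M$. First I would evaluate the right-hand side: it is the composite of the two disjoint cupcap idempotents. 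By the defining assignments of $\rho'$ the $++$-cupcap acts on $M$ by right multiplication by the idempotent $e_{n+1}=\ckme_{n+1}/[2]$, while the $--$-cupcap is the cupcap generator bent around, which, after straightening the resulting single-strand zigzags with \eqref{eq:duality} (verified for $\rho'$ in \cite{Quinn}), acts by left multiplication by $e_{n+1}$. Hence the right-hand side is the projector $P\colon m\mapsto e_{n+1}\,m\,e_{n+1}$ of $M$ onto the corner $e_{n+1}\TL_{n+2}e_{n+1}\subseteq M$.

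For the left-hand side, observe that the diagram factors through $\mathbbm{1}$ already in $\Weyl'$: it is $\rho'(\beta)\circ\rho'(\alpha)$ with $\alpha\colon Q_\epsilon\to\mathbbm{1}$ the bottom half and $\beta\colon\mathbbm{1}\to Q_\epsilon$ the top half. Expanding the doubled cups and caps occurring in $\alpha,\beta$ into single cups, caps and cupcap generators (using the definitions in \eqref{eq:DefZeroBubble}, reflections thereof, and the fact that a doubled strand is the image of the cupcap idempotent) and then applying $\rho'$ term by term, one finds that $\alpha$ is the map $M\to\TLn$, $m\mapsto\ptr_{n+1}\ptr_{n+2}(e_{n+1}\,m\,e_{n+1})$, and $\beta$ the map $\TLn\to M$, $c\mapsto c\,e_{n+1}$. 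The crucial algebraic facts are the defining relation $\ckme_{n+1}\,x\,\ckme_{n+1}=\ckme_{n+1}\ptr_{n+1}(x)$ and the Jones-basis decomposition $\TL_{n+2}=\TL_{n+1}\oplus\TL_{n+1}\ckme_{n+1}\TL_{n+1}$, which together give $e_{n+1}\TL_{n+2}e_{n+1}=\TLn\,e_{n+1}\cong\TLn$, together with the evaluations $\ptr_{n+2}(\ckme_{n+1})=1$ and $\ptr_{n+1}(c)=[2]\,c$ for $c\in\TLn$; these show $\ptr_{n+1}\ptr_{n+2}(e_{n+1}me_{n+1})$ is precisely the unique $c_m\in\TLn$ with $e_{n+1}me_{n+1}=c_m\,e_{n+1}$. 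Therefore $\rho'(\beta)\circ\rho'(\alpha)$ sends $m\mapsto c_m\mapsto c_m\,e_{n+1}=e_{n+1}me_{n+1}$, which is $P$, i.e.\ the right-hand side.

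The main obstacle is the bookkeeping in the middle step: writing $\alpha$ and $\beta$ as honest single-strand diagrams, keeping track of which of the four strands feeds each nested cup or cap, and --- most delicately --- carrying the normalizing factors of $[2]^{\pm1}$ so that the expanded doubled cap and doubled cup really furnish mutually inverse identifications $e_{n+1}\TL_{n+2}e_{n+1}\cong\TLn$ (equivalently, so that the doubled snake identity of Remark \ref{rem:DoubledObjects} is realized by $\rho'$). Apart from the defining Temperley--Lieb relations and the $\rho'$-values of the single cups and caps supplied by \cite{Quinn}, the only relation needed is \eqref{eq:duality} (and, if one prefers to argue the bent-cupcap step diagrammatically, the second relation of \eqref{eq:AlternatingCupCaps} and its reflection from Lemma \ref{lem:RepAlternating}), all already available; once the single-strand pictures are in place the remaining computation is routine.
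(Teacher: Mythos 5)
Your proposal is correct and follows essentially the same route as the paper's proof: both identify the two sides as endomorphisms of the bimodule ${}_{n}(\TL_{n+2})_{n}$, compute the right-hand side as $m\mapsto e_{n+1}me_{n+1}$ and the left-hand side as $m\mapsto e_{n+1}\bigl(\ptr_{n+1}\ptr_{n+2}(e_{n+1}me_{n+1})\bigr)e_{n+1}$, and conclude using the Jones-basis identification $e_{n+1}\TL_{n+2}e_{n+1}=\TL_{n}e_{n+1}$. The only cosmetic difference is that the paper writes $e_{n+1}me_{n+1}=e_{n+1}x'$ with $x'\in\TL_{n}$ and pulls $x'$ through the partial traces, whereas you phrase the same step as the traces recovering the unique $\TL_{n}$-coefficient $c_m$.
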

\begin{proof}
	Consider $x\in \mod{n}{{n+2}}{n}$. The image of $x$ under the right morphism is $e_{n+1}xe_{n+1}$, which can be written as $e_{n+1}x'$ for some $x'\in\TL_{n}\hookrightarrow \TL_{n+2}$ using the Jones basis \cite[Prop. 2.3]{Ridout14}. Applying the left morphism to $x$ yields
	\begin{align*}
		x\mapsto e_{n+1}xe_{n+1}&\mapsto {\ptr}_{n+1}\circ {\ptr}_{n+2}(e_{n+1}xe_{n+1})
		\mapsto
		e_{n+1} \left({\ptr}_{n+1}\circ {\ptr}_{n+2}(e_{n+1}xe_{n+1})\right)e_{n+1}
		\\
		&=
		e_{n+1} \left({\ptr}_{n+1}\circ {\ptr}_{n+2}(e_{n+1}x')\right)e_{n+1}
		=
		e_{n+1} \left({\ptr}_{n+1}\circ {\ptr}_{n+2}(e_{n+1})\right)x'e_{n+1}
		\\&=
		e_{n+1} x'e_{n+1}
		=
		e_{n+1} x'
		=
		e_{n+1} xe_{n+1}.
	\end{align*}
	Thus proving the identity. 
\end{proof}

\begin{cor}
	\label{cor:isotypic}
	Let $M$ be a $\TLn$-module and recall that $\kimage[k]$ is image of the idempotent $\circled{k}$ in $\Wact$. Then $\rho(\kimage[k])(M)=P_k(M)$ is the projection onto the $W^n_{n-2k}$ isotypic component of $M$.
\end{cor}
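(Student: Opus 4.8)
The strategy is to reduce the statement to the already-established case $k=0$ (where $\rho'(\kimage[0])$ is projection onto the trivial, i.e.\ $W^n_n$, isotypic component, proven in Lemma \ref{lem:iso}) by means of a recursion on $k$ coming from the defining relation
\[
\tikzc{\drawP{2}{0}{k+1}}~=~\tikzc{\Ncup{0}{0}\Rcap{0}{0}\drawP{1}{0}{k}}
\]
in \eqref{eq:BubbleDefs}. First I would pin down, via Lemma \ref{lem:PThroughE} and Lemma \ref{lem:CCWK}, how the doubled circle $\tikzc{\Ncap{0}{0}\Rcup{0}{0}}$ acts on isotypic components: the point is that, under $\rho'$, the doubled cup--cap $\tikzc{\uucup{0}{4}\nncap{0}{0}}$ acts as multiplication by $e_{n+1}$ on a restriction-type bimodule, and sandwiching a $\TL$-module between such doubled strands ``shifts'' the relevant isotypic index. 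Concretely, in a region labeled $n$, the composite $\tikzc{\Ncup{0}{0}\Rcap{0}{0}}$ takes $\End(\mathbbm{1})$ in label $n+2$ to $\End(\mathbbm{1})$ in label $n$, and I would compute that on a simple module $W^n_{n-2k}$ it reproduces (up to the scalars from the boxes, which are precisely the $\frac{[\cdot]}{[\cdot]}$ ratios that make everything consistent) the behaviour of $W^{n+2}_{(n+2)-2(k+1)} = W^{n+2}_{n-2k}$ — i.e.\ it implements the index shift $k \mapsto k+1$ with $n \mapsto n+2$.

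With that shift understood, the recursion proceeds as follows. Assume inductively that $\rho'(\kimage[k])(M)$ is projection onto the $W^n_{n-2k}$ isotypic component of any $\TL_n$-module $M$, for all $n$. The defining relation above expresses $\circled{k+1}$ in region $n$ as the doubled-circle composite applied to $\circled{k}$ in region $n+2$. Applying $\rho'$ and the inductive hypothesis (at level $n+2$, where $\circled{k}$ projects onto $W^{n+2}_{n-2k}$), and then using the computation of how the doubled circle transports isotypic components down from label $n+2$ to label $n$, I would conclude that $\rho'(\circled{k+1})$ in region $n$ is the projection onto $W^n_{(n)-2(k+1)} = W^n_{n-2(k+1)}$. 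One must also check the base case more carefully than just citing $k=0$: namely that $\rho'(\circled{k})=0$ on $\TL_n$-mod once $n-2k<0$ (no such simple module exists), which is consistent with the relation $\circled{j}=0$ for $j<0$ in \eqref{eq:BoxInverse} together with the quotient relations imposed in $\Wtc'_0$; this handles the degenerate end of the recursion and shows the idempotents $\kimage[k]$ are mutually orthogonal with the expected ranges (matching \eqref{eq:BubbleBoxRelations}, $\circled{k}\circled{m}=\delta_{km}\circled{k}$, and Remark \ref{rem:infsum}).

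The main obstacle I anticipate is the bookkeeping in the ``index shift'' step: tracking exactly how $\rho'(\tikzc{\Ncup{0}{0}\Rcap{0}{0}})$ acts on each simple summand $W^{n+2}_{n+2-2j}$ of a module, and verifying that it kills all summands except the one that descends to $W^n_{n-2k}$, while acting as the identity (not merely a nonzero scalar) on that one. This requires combining the explicit bimodule formula $\tikzc{\uucup{0}{4}\nncap{0}{0}} \mapsto [x\mapsto xe_{n+1}]$ with the branching rules $\Ind(W^n_m)\cong W^{n+1}_{m+1}\oplus W^{n+1}_{m-1}$ and $\Res(W^n_m)\cong W^{n-1}_{m+1}\oplus W^{n-1}_{m-1}$ recalled in Lemma \ref{lem:iso}, and checking that the box scalars $\frac{[n+k+1]}{[n+k+2]}$ appearing through \eqref{eq:CWCircles}--\eqref{eq:CCWCircles} conspire to normalize the composite to an idempotent; this is where the nontrivial $q$-number identities enter, but it is otherwise routine given the earlier lemmas.
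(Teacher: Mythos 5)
Your overall strategy coincides with the paper's: induct on $k$ via the recursive definition of the bubbles in \eqref{eq:BubbleDefs}, take as base case $k=0$ (where the bubble acts by the Jones--Wenzl idempotent $f^{(n)}$ and hence projects onto $W^n_n$ --- note this is the content of the unnamed lemma in the ``Action of Bubbles'' subsection, not of Lemma \ref{lem:iso}, which is the Mackey-type isomorphism), and then verify by an explicit computation with the bimodule assignments, path idempotents and branching rules that the enclosing circle carries the inductive projection to the projection onto $W^n_{n-2(k+1)}$. The paper runs the same recursion with one strand left open --- the curl with $\circled{k}$ inside, viewed as an endomorphism of $\mod{n+1}{n+1}{n}$ and computed via Remark \ref{rem:idemp} --- whereas you propose to use the closed doubled circle directly; this is only a cosmetic difference, since pinning down the endomorphism of $\mathbbm{1}$ amounts to the same computation.

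There is, however, a concrete bookkeeping error which, taken literally, breaks your inductive step. The doubled circle in \eqref{eq:BubbleDefs} that produces $\circled{k+1}$ in a region labelled $n$ encloses a region labelled $n-2$, not $n+2$: under $\rho'$ it is the composite ``go down two levels through the $e$-projected doubled strand, apply $\circled{k}$ at level $n-2$, come back up,'' which is consistent with the fact that the empty such circle equals $1-\circled{0}$ (Lemma \ref{lem:DoubledCircles}), killing exactly the trivial isotypic piece $W^n_n$ (the simple that dies under this double restriction), while the circle whose interior is labelled $n+2$ is the opposite orientation and evaluates to $1$; by Lemma \ref{lem:CCWK} that opposite circle shifts $k\mapsto k-1$, i.e.\ it runs your recursion in the wrong direction. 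Moreover, at level $n+2$ the inductive hypothesis would give projection onto $W^{n+2}_{n+2-2k}$, not $W^{n+2}_{n-2k}$; the statement ``$k\mapsto k+1$ as $n\mapsto n+2$'' that you invoke is really Lemma \ref{lem:PThroughE}, which compares bubbles on the two sides of a doubled strand and is not the relation needed here. Once the labels are corrected --- hypothesis: $\circled{k}$ at level $n-2$ is the projection onto $W^{n-2}_{n-2-2k}$; conclusion: its enclosure at level $n$ is the projection onto $W^n_{n-2(k+1)}$ --- the transport computation you outline (the $e$-multiplication formula for the doubled cupcap, the branching rules, and the box scalars normalizing the composite to an idempotent rather than a multiple of one) is exactly what the paper carries out.
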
 
\begin{proof}
	We have already shown that $\circled{0}$ acts by $f^{(n)}$ on $\TL_{n}$-modules, and therefore is the projection onto the trivial isotypic component $W^n_{n}$. Thus we proceed by induction and show that $\circled{k}$ acts as $\sum_{p\in P^n_{n-2k}}e_{p,p}$ on $\TL_{n}\mathrm{-mod}$. By Proposition \ref{prop:Consistent} and Lemma \ref{lem:DoubledRelations},
	\begin{align*}
		\tikzc{\drawP{-2}{0}{k+1}\up{0}{0}\vseg{0}{-2}}
  ~\tleq~
  \tikzc{\vseg[top>]{0}{4}\ncap{2}{4}
			\vseg[bottom<]{4}{2}
			\vseg{4}{0}
			\cupcap{0}{0}\ncup{2}{0}
			\vseg[top>]{0}{-2}
			\drawP{2.5}{2}{k}
		}		
	\end{align*}
	It is therefore enough to compute the action of the latter on $\TLn\mathrm{-mod}$. As an endomorphism of $\mod{n+1}{n+1}{n}$, it acts via multiplication by
	\begin{gather*}
		\sum_{\substack{p,r\in P^n\\s\in P^{n-1}_{n-1-2k}}}
		\frac{[r_{n-1}+1]}{[r_n+1]}\frac{1}{|P^{n-1}_{r_n-1}|}
		e_{p,r}e_{s,s}e_{n}'e_{r,p}
		\\
		=
		\sum_{p\in P^n_{n-2k}}
		\frac{[n-2k]}{[n+1-2k]}
		v_p e_{n}'\check{v}_p
		+
		\sum_{p\in P^n_{n-2-2k}}
		v_p f^{(n-1-2k)}\check{v}_p
		\\
		=\sum_{p\in P^{n+1}_{n-1-2k}} e_{p,p}
		=
		\sum_{p\in P^{n+1}_{n+1-2(k+1)}} e_{p,p}
	\end{gather*}
	The second equality follows from Remark \ref{rem:idemp} and the claim is proven. 
\end{proof}

Consequently, the first relation in \eqref{eq:BubbleBoxRelations}
\begin{align}
	\tikzc{
		\drawP{0}{0}{k}
		\drawP{2}{0}{m}
	}
	\tleq\delta_{k,m}~
	\tikzc{
		\drawP{2}{0}{k}
	}
\end{align}
is satisfied since projections onto different isotypic components are orthogonal.

The relation (\ref{eq:InductProj}) implies the identities
\begin{align}
	\label{eq:PThroughUpN}
	\tikzc{
		\up[top>]{0}{0}\vseg{0}{-2}
		\drawP{-1.5}{0}{k}
	}~\tleq~
	\tikzc{
		\up[top>]{0}{0}\vseg{0}{-2}
		\drawP{-1.5}{0}{k}
	}
	\left(
	\tikzc{
		\drawP{0}{0}{k}
	}
	~+~
	\tikzc{
		\drawP{0}{0}{k-1}
	}
	\right)
	&&\mbox{and}&&
	\tikzc{
		\up[top>]{0}{0}\vseg{0}{-2}
		\drawP{1.5}{0}{k}
	}~\tleq
	\left(
	\tikzc{
		\drawP{0}{0}{k}
	}
	~+~
	\tikzc{
		\drawP{0}{0}{k+1}
	}
	\right)
	\tikzc{
		\up[top>]{0}{0}\vseg{0}{-2}
		\drawP{1.5}{0}{k}
	} 
\end{align}
These can also be inferred directly by considering the above relations as describing isomorphisms between modules:
\begin{align*}
	C_k\circ \Ind(M)\cong C_k\circ \Ind\circ\, C_k(M)\oplus C_k\circ \Ind\circ\, C_{k-1}(M)
	\\
	\Ind\circ\, C_k(M)\cong C_k\circ\Ind\circ\, C_k(M)\oplus C_{k+1}\Ind\circ\, C_k(M)\,.
\end{align*}

\begin{lem}\label{lem:nestedk}
	The following relations hold under the functor $\rho'$:
	\begin{align*}
	{\tikzc{
			\ncup{0}{0}
			\rcap{0}{0}
			\drawP{1}{0}{k}
	}}
	~\tleq~
	{\tikzc{
			\drawsquare[-2k-1]{0}{0}
			\drawP{3.5}{0}{k}
		}
		~
		+
		\left(
		~
		\tikzc{\drawsquare[-2k-2]{0}{0}}
		~\right)^{\dagger}
		\tikzc{
			\drawP{1}{0}{k+1}
	}}
&&
\mbox{and}
&&
	{\tikzc{
			\rcup{0}{0}
			\ncap{0}{0}
			\drawP{1}{0}{k}
	}}
	~\tleq~
	{\tikzc{
			\drawsquare[-2k+1]{0}{0}
			\drawP{4}{0}{k-1}
		}
		~
		+
		\left(~
		\tikzc{\drawsquare[-2k]{0}{0}}
		~
		\right)^{\dagger}
		\tikzc{
			\drawP{1}{0}{k}
	}}
	\end{align*}
\end{lem}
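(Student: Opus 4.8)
The plan is to verify each of the two displayed identities as an equality of natural endomorphisms of the identity functor on $\TLn\text{-mod}$, for every $n\geq0$, by comparing the scalar that each side assigns to each simple module $W^n_{n-2j}$. Because $\TLn\text{-mod}$ is semisimple, a natural endotransformation of the identity functor is determined by one such scalar per isomorphism class of simple, so this comparison suffices. By Corollary~\ref{cor:isotypic}, the bubble $\circled{j}$ in a region labeled $r$ acts on $\TL_r\text{-mod}$ as the projection onto the $W^r_{r-2j}$-isotypic component; by definition of $\rho'$, the box $\boxed{j}$ acts by the scalar $\tfrac{[n+j+1]}{[n+j+2]}$, so its dagger acts by $[2]-\tfrac{[n+j]}{[n+j+1]}=\tfrac{[n+j+2]}{[n+j+1]}$ (using the defining formula for $(\boxed{j})^{\dagger}$ together with the fact that the circle $\tikzc{\rcup{0}{0}\ncap{0}{0}}$ acts by $[2]$, the first relation in \eqref{eq:CWCircles}, verified directly). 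Hence the right-hand side of the first identity acts on $W^n_m$ by $\tfrac{[m]}{[m+1]}$ when $m=n-2k$, by $\tfrac{[m+2]}{[m+1]}$ when $m=n-2k-2$, and by $0$ otherwise, while the right-hand side of the second acts by $\tfrac{[m+2]}{[m+1]}$ when $m=n-2k$, by $\tfrac{[m]}{[m+1]}$ when $m=n-2k+2$, and by $0$ otherwise. When $n-2k<0$ all of these vanish, consistent with $\circled{k}\tleq[n]0$ in $\Wtc'_0$, so we may assume $n\geq 2k$.

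For the left-hand sides the key point is to record the label of the region bounded by each loop: crossing the single $Q_+$-strand shifts the label by $1$, so the $k$-labeled bubble inside $\tikzc{\ncup{0}{0}\rcap{0}{0}\drawP{1}{0}{k}}$ lies in a region labeled $n-1$, whereas the one inside $\tikzc{\rcup{0}{0}\ncap{0}{0}\drawP{1}{0}{k}}$ lies in a region labeled $n+1$. Unwinding the first diagram through the bimodule maps $\rho'$ assigns to the elementary cup, cap and trace morphisms, it acts on a $\TLn$-module $M$ as: restrict to $\TL_{n-1}$, project onto the $W^{n-1}_{n-1-2k}$-isotypic component (Corollary~\ref{cor:isotypic}), and reassemble via the counit of the induction--restriction adjunction. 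Applying this to a simple $W^n_m$ and using $\Res W^n_m\cong W^{n-1}_{m-1}\oplus W^{n-1}_{m+1}$, the output is nonzero exactly when $n-1-2k\in\{m-1,m+1\}$, i.e.\ when $m\in\{n-2k,\ n-2k-2\}$, which is precisely the support of the corresponding right-hand side; the second diagram behaves the same way with $\Ind W^n_m\cong W^{n+1}_{m-1}\oplus W^{n+1}_{m+1}$ in place of $\Res W^n_m$, giving support $\{n-2k,\ n-2k+2\}$.

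It remains to compute the surviving scalars, which is the computational core of the argument. On any simple $W^n_m$ the composite ``include the $W^{\bullet}_{m+1}$ summand of $\Res W^n_m$ (or of $\Ind W^n_m$), then apply the counit'' contributes the factor $\tfrac{[m+2]}{[m+1]}$, while the composite through the $W^{\bullet}_{m-1}$ summand contributes $\tfrac{[m]}{[m+1]}$; one checks this using the partial-trace identity $\ptr_m(f^{(m)})=\tfrac{[m+1]}{[m]}f^{(m-1)}$ from Section~\ref{sec:TL} and the explicit coefficients $c_r$ and matrix units $e_{p,r}$ recalled in Subsection~\ref{ssec:Qdiagrams}. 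Combined with the support analysis, the left-hand side of the first identity acts on $W^n_{n-2k}$ by $\tfrac{[m]}{[m+1]}$ (through the $W^{n-1}_{m-1}$ branch) and on $W^n_{n-2k-2}$ by $\tfrac{[m+2]}{[m+1]}$ (through the $W^{n-1}_{m+1}$ branch), matching the right-hand side; the second identity follows identically with $\Ind$ replacing $\Res$. Agreement of scalars on every simple of every $\TLn\text{-mod}$ gives equality of the two natural transformations, proving both relations. I expect the main obstacle to be exactly this last step of bookkeeping --- fixing the normalizations of the cup, cap and $c_r$-data so that the counit produces precisely the quantum-integer ratios, and in particular seeing the $\left(\boxed{\,\cdot\,}\right)^{\dagger}$-factor of the right-hand side emerge on the neighbouring simple $W^n_{n-2k\mp2}$ rather than on $W^n_{n-2k}$; everything else is formal, following from Corollary~\ref{cor:isotypic}, Remark~\ref{rem:infsumN}, and the recorded actions of boxes and bubbles.
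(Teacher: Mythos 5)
Your proposal is correct and takes essentially the same route as the paper: both sides are evaluated in the representation $\rho'$ and compared on each simple $W^n_m$, with Corollary \ref{cor:isotypic} handling the bubbles and the boxes (and their daggers) acting by $\tfrac{[n+j+1]}{[n+j+2]}$ and $\tfrac{[n+j+2]}{[n+j+1]}$. The branch scalars you assert but defer as ``bookkeeping'' --- $\tfrac{[m]}{[m+1]}$ through $W^{\bullet}_{m-1}$ and $\tfrac{[m+2]}{[m+1]}$ through $W^{\bullet}_{m+1}$, for both circles --- are exactly what the paper's two displayed computations with the matrix units $e_{p,r}$, the coefficients $c_r$, and $\ptr_{n+1}$ produce, so that deferred step is precisely the content of the paper's proof and works out as you predict.
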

\begin{proof}
	For a given $n\geq0$, the action of the clockwise nested circle is multiplication by
	\begin{align*}
		\sum_{\substack{p,r\in P^n\\p_n=r_n}}\sum_{s\in P^{n-1}_{n-1-2k}} \frac{[p_{n-1}+1]}{[p_n+1]}\frac{1}{|P^{n-1}_{p_{n-1}}|}e_{p,r}e_{s,s}e_{r,p}
		= \sum_{p,r\in P^n_{n-2k}} \frac{[n-2k]}{[n-2k+1]}e_{p,p}
		+
		\sum_{p,r\in P^n_{n-2-2k}} \frac{[n-2k]}{[n-2k-1]}e_{p,p}\,.
	\end{align*}
	This agrees with the proposed equality, including the cases $n-1-2k<0$. 
	For the counterclockwise circle, it acts as multiplication by
	\begin{align*}
		\ptr_{n+1}\left(\sum_{p\in P^{n+1}_{n+1-2k}}e_{p,p} \right)
		&=
		\ptr_{n+1}\left(
		\sum_{p\in P^{n}_{n-2k}}v_pf^{(n+1)}\check{v}_p 
		+
		\sum_{p\in P^{n}_{n+2-2k}}\frac{[n-2k+2]}{[n-2k+3]}v_pe_{n}\check{v}_p
		\right)
		\\
		&=
		\sum_{p\in P^{n}_{n-2k}}\frac{[n-2k+2]}{[n-2k+1]}e_{p,p}
		+
		\sum_{p\in P^{n}_{n+2-2k}}\frac{[n-2k+2]}{[n-2k+3]}e_{p,p}
	\end{align*}
	as desired.
\end{proof}

Equation \eqref{eq:SumBubbles} together with Lemma \ref{lem:nestedk} imply that the relations for the unnested circles in \eqref{eq:CCWCircles} and \eqref{eq:CWCircles} hold. See also Remark \ref{rem:infsum}.

\subsection{Action of Bubbles with Temperley-Lieb Generators} 
We prove that relations involving both Temperley-Lieb generators and bubbles hold in the image of $\Wact'$ under $\rho'$. This will include the last of the defining relations from $\uWeyl'$, thus proving Theorem \ref{thm:rep}. The first of these is the second relation in \eqref{eq:DoubledRelations}, which states
\begin{align*}
	\tikzc{\uucup{0}{4}\nncap{0}{0}
		\drawP{-1}{2}{0}}~\tleq0
\end{align*}
This is indeed the zero map on $\mod{n+2}{n+2}{n}$, it is multiplication by $f^{(n+2)}e_{n+1}=0$.

To prove relation \eqref{eq:MoveBubbles} holds in the action of $\Wact'$ on $\T$, we first prove the following two lemmas.

\begin{lem}\label{lem:kek}
	The following identity holds
	\begin{align*}
		[2]~
		\tikzc{
			\up[top>]{0}{4}\up[top>]{2}{4}
			\cupcap{0}{0}
			\vseg{0}{-2}\vseg{2}{-2}
			\drawP{1}{0}{k}
			\drawP{1}{4}{k}
		}
		~\tleq~
		\tikzc{
			\up[top>]{0}{0}\up{2}{0}
			\vseg{0}{-2}\vseg{2}{-2}
			\drawP{-1.75}{0}{k+1}
			\drawP{1}{0}{k}
			\drawP{3}{0}{k}
		}
		\left(~
		\tikzc{\drawsquare[-2k]{0}{0}}
		~\right)^{-1}
		~+~
		\tikzc{
			\up[top>]{0}{0}\up{2}{0}
			\vseg{0}{-2}\vseg{2}{-2}
			\drawP{-1}{0}{k}
			\drawP{1}{0}{k}
			\drawP{3.75}{0}{k-1}
			\drawsquare[-2k+1]{8}{0}
		}
	\end{align*}
\end{lem}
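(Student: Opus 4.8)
The plan is to verify the identity directly inside the representation $\rho'$, following the same path-basis strategy used in the proofs of Corollary~\ref{cor:isotypic} and Lemma~\ref{lem:nestedk}. Fix $n\ge 0$. Both sides are endomorphisms of the bimodule $\rho'(Q_{++})=\mod{n+2}{n+2}{n}$, which I identify with $\TL_{n+2}$ equipped with its left regular action and its right action restricted along $\TL_n\hookrightarrow\TL_{n+2}$; every endomorphism of this bimodule is right multiplication by an element of the centralizer $Z_{\TL_{n+2}}(\TL_n)$, so it suffices to compute the element attached to each side. Reading off region labels by the up-arrow rule, the two bubbles in the left-hand diagram both sit in the region labelled $n+1$, hence (Corollary~\ref{cor:isotypic}) each acts by right multiplication by the central idempotent $\pi^{(n+1)}_k:=\sum_{p\in P^{n+1}_{\,n+1-2k}}e_{p,p}\in\TL_{n+1}$ (with $\pi^{(m)}_0=f^{(m)}$), while the cup-cap contributes right multiplication by the cup-cap element $e'_{n+1}$ over strands $n+1,n+2$; on the right, the three bubbles of each term lie in the regions $n+2$, $n+1$, $n$, and the boxes act by the scalars in the definition of $\rho'$, with $(\boxed{-2k})^{-1}\tleq[n]\frac{[n-2k+2]}{[n-2k+1]}$ and $\boxed{-2k+1}\tleq[n]\frac{[n-2k+2]}{[n-2k+3]}$. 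Thus (for $n$ with these boxes invertible, i.e.\ $n\ge 2k$; the small cases are dealt with at the end) the claimed relation is equivalent, up to a power of $[2]$ coming from the normalization of $e'_{n+1}$, to the identity in $\TL_{n+2}$
\[
[2]\,\pi^{(n+1)}_k\,e'_{n+1}\,\pi^{(n+1)}_k
=\frac{[n-2k+2]}{[n-2k+1]}\,\pi^{(n+2)}_{k+1}\pi^{(n+1)}_k\pi^{(n)}_k
+\frac{[n-2k+2]}{[n-2k+3]}\,\pi^{(n+2)}_{k}\pi^{(n+1)}_k\pi^{(n)}_{k-1}.
\]

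To prove this identity I would expand both factors $\pi^{(n+1)}_k$ over paths, writing $\pi^{(n+1)}_k e'_{n+1}\pi^{(n+1)}_k=\sum_{p,r}e_{p,p}e'_{n+1}e_{r,r}$ with $p,r\in P^{n+1}_{\,n+1-2k}$, and factoring each path $p=(\bar p,p_{n+1})$ through its truncation $\bar p\in P^{n}_{\,(n+1-2k)\pm1}$. Each $e_{p,p}e'_{n+1}e_{r,r}$ is then evaluated with the cellular/path calculus of Subsection~\ref{ssec:Qdiagrams} together with the Jones--Wenzl recursion $f^{(m)}e'_mf^{(m)}=[2]\frac{[m+1]}{[m]}\bigl(f^{(m)}-f^{(m+1)}\bigr)$ and the partial-trace formula $\ptr_{m+1}(f^{(m+1)})=\frac{[m+2]}{[m+1]}f^{(m)}$ (equivalently $\ptr_m(\pi^{(m)}_j)=\frac{[m-2j+2]}{[m-2j+1]}\pi^{(m-1)}_j+\frac{[m-2j]}{[m-2j+1]}\pi^{(m-1)}_{j-1}$, already used in Corollary~\ref{cor:isotypic}). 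After resolving $e'_{n+1}$ sandwiched between the two projectors, the sum splits according to whether the level-$(n+2)$ step turns up (height $n+1-2k\to n+2-2k$) or down (height $n+1-2k\to n-2k$); the two groups assemble, with the quantum-dimension ratios $[h+1]/[h'+1]$ at the relevant heights, into the two terms on the right-hand side above, noting that $e'_{n+1}$ forces the level-$n$ and level-$(n+2)$ heights to agree (which is why $\pi^{(n)}_k$ pairs with $\pi^{(n+2)}_{k+1}$ and $\pi^{(n)}_{k-1}$ with $\pi^{(n+2)}_k$).

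The main obstacle is bookkeeping rather than ideas: reading off the correct level for each bubble and box, and carrying the matrix-unit and Jones--Wenzl identities through while tracking the rational coefficients and the stray power of $[2]$ from the $e_{n+1}$ versus $e'_{n+1}$ normalization. The most delicate point is the behaviour for small $n$: when $n+1-2k<0$ both sides vanish; when $n-2k\in\{-1,0\}$ some of $\pi^{(n+2)}_{k+1},\pi^{(n)}_k,\pi^{(n)}_{k-1}$ are zero and $\boxed{-2k}$ becomes the zero morphism (or is absent from $\Wact'$), so the term carrying $(\boxed{-2k})^{-1}$ genuinely contributes only for $n\ge 2k$, while for smaller $n$ its partner $\pi^{(n+2)}_{k+1}$ already vanishes; one must check these degenerate cases are consistent with the conventions of Section~\ref{sec:twocat}. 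This is also why the lemma is phrased as an identity holding under $\rho'$ rather than as an abstract relation in $\Weyl$.
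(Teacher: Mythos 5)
Your strategy is the same as the paper's: fix $n$, read both sides as right multiplication on $\mod{n+2}{n+2}{n}$ by explicit elements of $\TL_{n+2}$ (bubbles becoming the isotypic projections $\pi^{(m)}_j=\sum_{p\in P^m_{m-2j}}e_{p,p}$ via Corollary \ref{cor:isotypic}, boxes becoming scalars), expand the level-$(n{+}1)$ projections over paths, split according to whether the last step rises or falls, and match using the Jones--Wenzl recursion and the partial-trace formula; the paper's proof is exactly this computation, written out for the left side only. One concrete correction is needed before the bookkeeping can close: your normalizations carry spurious factors of $[2]$, and as displayed your target identity is false. The cup-cap acts by the idempotent $e_{n+1}=e'_{n+1}/[2]$, so the prefactor $[2]$ in the lemma is precisely what converts it to $e'_{n+1}$; the identity to verify is
\[
\pi^{(n+1)}_k\,e'_{n+1}\,\pi^{(n+1)}_k
=\frac{[n-2k+2]}{[n-2k+1]}\,\pi^{(n+2)}_{k+1}\pi^{(n+1)}_k\pi^{(n)}_k
+\frac{[n-2k+2]}{[n-2k+3]}\,\pi^{(n+2)}_{k}\pi^{(n+1)}_k\pi^{(n)}_{k-1},
\]
with no extra $[2]$ on the left, and the Jones--Wenzl identity you quote should read $f^{(m)}e'_mf^{(m)}=\frac{[m+1]}{[m]}\bigl(f^{(m)}-f^{(m+1)}\bigr)$, again without the $[2]$. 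With these corrections the $k=0$ sanity check works out: $f^{(n+1)}e'_{n+1}f^{(n+1)}=\frac{[n+2]}{[n+1]}\bigl(f^{(n+1)}-f^{(n+2)}\bigr)=\frac{[n+2]}{[n+1]}\pi^{(n+2)}_{1}f^{(n+1)}$ while $\pi^{(n)}_{-1}=0$, matching the right side; with your stated versions it fails by a power of $[2]$. Apart from this, your plan -- the path splitting forced by $e'_{n+1}$ pairing $\pi^{(n)}_k$ with $\pi^{(n+2)}_{k+1}$ and $\pi^{(n)}_{k-1}$ with $\pi^{(n+2)}_k$, and the attention to the degenerate cases $n-2k\in\{-1,0\}$ where some projections or boxes vanish -- is sound and consistent with how the paper (via Section \ref{sec:twocat}) handles the small-$n$ cases.
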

\begin{proof}
	The left side of the equality acting on $\TLn\mathrm{-mod}$ is the endomorphism of $\mod{n+2}{n+2}{n}$ given via multiplication by
	\begin{gather*}
		\sum_{p,r\in P^{n+1}_{n+1-2k}} e_{p,p}e_{n+1}'e_{r,r}
		\\=
		\sum_{p\in P^{n}_{n-2k}} v_{p}f^{(n+1-2k)}e_{n+1}'f^{(n+1-2k)}\check{v}_p
		+
		\sum_{p,r\in P^{n}_{n+2-2k}} \left(\frac{[n+2-2k]}{[n+3-2k]}\right)^2v_p\cup_{n+2-2k}f^{(n+2-2k)}\cap_{n+2-2k}\check{v}_p\,.
	\end{gather*}
	The resulting expression is equivalent to that obtained from the right side of the equality. 
\end{proof}

\begin{rem}
	Lemma \ref{lem:kek} is a generalization of the Jones-Wenzl recursion. Using \eqref{eq:PThroughUpN} in the case $k=0$, one
	has the relation
	\begin{align*}
		[2]~
		\tikzc{
			\up[top>]{0}{4}\up[top>]{2}{4}
			\cupcap{0}{0}
			\vseg{0}{-2}\vseg{2}{-2}
			\drawP{2}{-.5}{0}
			\drawP{2}{4.5}{0}
		}
		\tikzc{\drawsquare[0]{0}{0}}~
		\tleq~
		\tikzc{
			\up[top>]{0}{0}\up{2}{0}
			\vseg{0}{-2}\vseg{2}{-2}
			\drawP{1}{0}{0}
			\drawP{3}{0}{0}
		}
		~-~
		\tikzc{
			\up[top>]{0}{0}\up{2}{0}
			\vseg{0}{-2}\vseg{2}{-2}
			\drawP{-1}{0}{0}
			\drawP{1}{0}{0}
			\drawP{3}{0}{0}
		}
	\end{align*}
	This implies the equality of multiplicative actions on $\mod{n+2}{n+2}{n}$:
	\begin{align*}
		\frac{[n]}{[n+1]}f^{(n+1)}e'_{n+1}f^{(n+1)}
		=
		f^{(n+1)}f^{(n)}-f^{(n+2)}f^{(n+1)}f^{(n)}=f^{(n+1)}-f^{(n+2)}\,. &\qedhere
	\end{align*}
\end{rem}

\begin{lem}\label{lem:kp1ek}
	The following identities hold in $\End(\T)$
	\begin{align*}
		{[2]~
		\tikzc{
			\up[top>]{0}{5.25}\up[top>]{3.5}{5.25}
			\widecupcap{0}{0}{1.75}
			\vseg{0}{-2}\vseg{3.5}{-2}
			\drawP{1.75}{5.5}{k+1}
			\drawP{1.75}{0}{k}
		}}
		~&\tleq~
	{	[2]~
		\tikzc{
			\up[top>]{0}{5.25}\up[top>]{3.5}{5.25}
			\widecupcap{0}{0}{1.75}
			\vseg{0}{-2}\vseg{3.5}{-2}
			\drawP{1.75}{5.5}{k+1}
			\drawP{3.5}{2.75}{k}
		}
		~-~
		\tikzc{
			\up[top>]{0}{0}\up{3.5}{0}
			\vseg{0}{-2}\vseg{3.5}{-2}
			\drawP{-1.75}{0}{k+1}
			\drawP{1.75}{0}{k+1}
			\drawP{4.5}{0}{k}
			\drawsquare[-2k-1]{8}{0}
		}}
	\\[2em]
		{[2]~
		\tikzc{
			\up[top>]{0}{5.25}\up[top>]{3.5}{5.25}
			\widecupcap{0}{0}{1.75}
			\vseg{0}{-2}\vseg{3.5}{-2}
			\drawP{1.75}{5.5}{k}
			\drawP{1.75}{0}{k+1}
		}
		}
		~&\tleq~
	{	[2]~
	\tikzc{
		\up[top>]{0}{5.25}\up[top>]{3.5}{5.25}
		\widecupcap{0}{0}{1.75}
		\vseg{0}{-2}\vseg{3.5}{-2}
		\drawP{1.75}{0}{k+1}
		\drawP{3.5}{2.75}{k}
	}
		~-~
		\tikzc{
			\up[top>]{0}{0}\up{3.5}{0}
			\vseg{0}{-2}\vseg{3.5}{-2}
			\drawP{-1.75}{0}{k+1}
			\drawP{1.75}{0}{k+1}
			\drawP{4.5}{0}{k}
			\drawsquare[-2k-1]{8}{0}
			}}
	\end{align*}
\end{lem}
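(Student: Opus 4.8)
The plan is to verify both identities by a direct computation on $\TL_n$-modules, in the style of the proofs of Lemmas \ref{lem:kek} and \ref{lem:nestedk}. Every diagram occurring in the two identities is an endomorphism of $\mod{n+2}{n+2}{n}$ (the $1$-morphism $\rho'(Q_+^{\otimes 2})$, which is $\Ind^2$), so it suffices to show, for each fixed $n\geq 0$, that the two sides act by right multiplication by the same element of $\TL_{n+2}$; for the cup-cap diagrams this multiplication factors through the image of $e_{n+1}'$, i.e.\ through $\rho'(Q_\oplus)$.

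First I would assemble the dictionary. By Corollary \ref{cor:isotypic} a bubble $\circled{j}$ sitting in a region labelled $m$ acts as the projection $\sum_{p\in P^m_{m-2j}}e_{p,p}$ onto the $W^m_{m-2j}$-isotypic component; the cup-cap acts by right multiplication by $e_{n+1}'$; and a box $\boxed{j}$ in a region labelled $m$ acts by the scalar $[m+j+1]/[m+j+2]$, with its $\dagger$ acting by the complementary scalar, as in Subsection \ref{ssec:BoxAction} and \eqref{eq:BoxInverse}. Reading the region labels off the diagrams of the first identity (both bubbles $\circled{k}$ and $\circled{k+1}$ sit in regions labelled $n+1$), the left-hand side becomes right multiplication by $[2]$ times the product of the projection onto $W^{n+1}_{n-1-2k}$, the element $e_{n+1}'$, and the projection onto $W^{n+1}_{n+1-2k}$, which I would expand in the path basis $\{e_{p,r}\}$ and simplify by expanding each path along its last step, exactly as in Lemma \ref{lem:kek}.

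The technical heart is the Jones--Wenzl bookkeeping: using the recursion $f^{(m+1)}=f^{(m)}-\frac{[m]}{[m+1]}f^{(m)}e_m'f^{(m)}$ and the partial-trace identity $\ptr_m(f^{(m)})=\frac{[m+1]}{[m]}f^{(m-1)}$ to rewrite expressions $f^{(\cdot)}e_{n+1}'f^{(\cdot)}$ in terms of Jones--Wenzl idempotents one level apart, and thereby split the sum into a piece supported on the $W^{n+2}_{n-2k}$-component and a piece supported on $W^{n+2}_{n-2-2k}$. On the right-hand side I expect the first term (with $\circled{k}$ on the lower strand and $\circled{k+1}$ on top) to reproduce the $W^{n+2}_{n-2-2k}$-piece directly, and the second term --- carrying two copies of $\circled{k+1}$, one $\circled{k}$, and the box $\boxed{-2k-1}$, whose scalar value on the relevant region is fixed by Subsection \ref{ssec:BoxAction} --- to reproduce the $W^{n+2}_{n-2k}$-piece with exactly the coefficient the recursion forces. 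Collecting terms then gives the first identity, and the second is its mirror image, obtained by interchanging the two projections attached to the lower strands.

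The main obstacle I anticipate is combinatorial rather than conceptual: keeping the region labels, the indices $n\pm 2k$, and the several Jones--Wenzl levels consistent while $e_{n+1}'$ is inserted between idempotents that may or may not already be at their top Jones--Wenzl level, and tracking correctly which box scalar (and which side of $\dagger$) appears where. I would also treat the boundary cases $n-2k<0$ and $n-1-2k<0$ separately: there the relevant isotypic components vanish, so both sides collapse to $0$, consistently with the convention $\circled{j}\tleq[n]0$ of $\Wtc_0'$.
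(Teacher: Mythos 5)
Your plan coincides with the paper's own proof: for each fixed $n$ the paper computes the action of both sides of the first identity on $\mod{n+2}{n+2}{n}$ as right multiplication by an element of $\TL_{n+2}$ — expanding the product of the two level-$(n+1)$ isotypic projections surrounding $e_{n+1}'$ in the path/matrix-unit basis and simplifying with the Jones--Wenzl recursion and partial-trace identity — and then deduces the second identity as the horizontal mirror image (reversing the order of multiplication), exactly as you propose. Your guesses about precisely which right-hand term absorbs which piece of the expansion are only heuristic (the paper's split is into a $v_pe'_{n-2k}\check{v}_pe'_{n+1-2k}$ term and a $v_p\cup_{n-2k}f^{(n-2k)}\cap_{n-2k}\check{v}_p$ term rather than a literal level-$(n+2)$ isotypic decomposition), but that bookkeeping is settled by carrying out the very computation you outline.
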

\begin{proof}
	Since these relations are mirror images of each other across a horizontal axis, their proofs are also related by reversing the order of multiplication. Therefore, we only write the computation for the first relation. The left side of the equality acting on $\TLn\mathrm{-mod}$ is the endomorphism of $\mod{n+2}{n+2}{n}$  multiply by
	\begin{gather*}
		\sum_{
			\substack{
				p\in P^{n+1}_{n+1-2(k+1)}\\
				r\in P^{n+1}_{n+1-2k}
		}} 
		e_{p,p}e_{n+1}'e_{r,r}
		=
		\sum_{p\in P^n_{n-2k}}\frac{[n-2k]}{[n+1-2k]}
		v_pe'_{n-2k}e'_{n+1-2k}f^{(n+1-2k)}\check{v}_p
		\\=
		\sum_{p\in P^n_{n-2k}}\frac{[n-2k]}{[n+1-2k]}
		v_pe'_{n-2k}\check{v}_pe'_{n+1-2k}
		-
		\sum_{p\in P^n_{n-2k}}\left(\frac{[n-2k]}{[n+1-2k]}\right)^2
		v_p\cup_{n-2k}f^{(n-2k)}\cap_{n-2k}\check{v}_p\,.
	\end{gather*}
	This is the same expression obtained from the right side of the equality. 
\end{proof}

The following can now be determined from the above lemmas and \eqref{eq:PThroughUpN}.
\begin{align}\label{eq:MoveBubblesSetup}
	[2]~
{	\tikzc{
		\up[top>]{0}{4}\up[top>]{2}{4}
		\cupcap{0}{0}
		\vseg{0}{-2}\vseg{2}{-2}
		\drawP{1}{0}{k}
		\drawP{-1}{2}{k}
	}}
	~&\tleq~
	{[2]~\tikzc{
		\up[top>]{0}{4}\up[top>]{2}{4}
		\cupcap{0}{0}
		\vseg{0}{-2}\vseg{2}{-2}
		\drawP{1}{0}{k}
		\drawP{1}{4}{k}
		\drawP{-.5}{2}{k}
	}
	~+[2]~
	\tikzc{
		\up[top>]{0}{5.25}\up[top>]{3.5}{5.25}
		\widecupcap{0}{0}{1.75}
		\vseg{0}{-2}\vseg{3.5}{-2}
		\drawP{1.75}{0}{k}
		\drawP{1.75}{5.5}{k-1}
		\drawP{0}{2.75}{k}
	}
}
	\\
	\notag
	&{\tleq}~
	[2]~
{\tikzc{
	\up[top>]{0}{5.25}\up[top>]{3.5}{5.25}
	\widecupcap{0}{0}{1.75}
	\vseg{0}{-2}\vseg{3.5}{-2}
	\drawP{1.75}{5.5}{k-1}
	\drawP{0}{2.75}{k}
}
	+
	\tikzc{
		\up[top>]{0}{0}\up{2}{0}
		\vseg{0}{-2}\vseg{2}{-2}
		\drawP{-1}{0}{k}
		\drawP{1}{0}{k}
		\drawP{3.75}{0}{k-1}
		\drawsquare[-2k+1]{8}{0}
	}
	-
	\tikzc{
		\up[top>]{0}{0}\up{3.5}{0}
		\vseg{0}{-2}\vseg{3.5}{-2}
		\drawP{-1}{0}{k}
		\drawP{1.75}{0}{k-1}
		\drawP{5.25}{0}{k-1}
	}
	\left(~\tikzc{\drawsquare[-2k+2]{8}{0}}~\right)^{-1}}
\end{align}
\begin{cor}\label{cor:MoveBubblesN}
	The identity
	\begin{align*}
		[2]~
		\tikzc{
			\up[top>]{0}{4}\up[top>]{2}{4}
			\cupcap{0}{0}
			\vseg{0}{-2}\vseg{2}{-2}
			\drawP{1}{0}{k}
		}
		~\tleq~
		[2]~
		\tikzc{
			\up[top>]{0}{5.25}\up[top>]{3.5}{5.25}
			\widecupcap{0}{0}{1.75}
			\vseg{0}{-2}\vseg{3.5}{-2}
			\drawP{1.75}{5.5}{k+1}
			\drawP{3}{2.75}{k}
		}
		~+~
		[2]~
		\tikzc{
			\up[top>]{0}{5.25}\up[top>]{3.5}{5.25}
			\widecupcap{0}{0}{1.75}
			\vseg{0}{-2}\vseg{3.5}{-2}
			\drawP{1.75}{5.5}{k-1}
			\drawP{0}{2.75}{k}
		}~
		+~
		\tikzc{
			\up[top>]{0}{0}\up{2}{0}
			\vseg{0}{-2}\vseg{2}{-2}
			\drawP{-1.75}{0}{k+1}
			\drawP{1}{0}{k}
			\drawP{3}{0}{k}
		}
		\left(~\tikzc{\drawsquare[-2k]{8}{0}}~\right)^{\dagger}\cdots
	\end{align*}\vspace{-1\baselineskip}
	\begin{align*}\notag
		+~
		\tikzc{
			\up[top>]{0}{0}\up{2}{0}
			\vseg{0}{-2}\vseg{2}{-2}
			\drawP{-1}{0}{k}
			\drawP{1}{0}{k}
			\drawP{3.75}{0}{k-1}
		}
		~\tikzc{\drawsquare[-2k+1]{8}{0}}
		~-~
		\tikzc{
			\up[top>]{0}{0}\up{3.5}{0}
			\vseg{0}{-2}\vseg{3.5}{-2}
			\drawP{-1.75}{0}{k+1}
			\drawP{1.75}{0}{k+1}
			\drawP{4.5}{0}{k}
		}
		~\tikzc{\drawsquare[-2k-1]{8}{0}}
		~-~
		\tikzc{
			\up[top>]{0}{0}\up{3.5}{0}
			\vseg{0}{-2}\vseg{3.5}{-2}
			\drawP{-1}{0}{k}
			\drawP{1.75}{0}{k-1}
			\drawP{5.25}{0}{k-1}
		}
		\left(~\tikzc{\drawsquare[-2k+2]{8}{0}}~\right)^{\dagger}
	\end{align*}
	holds in the action of $\Wact'$ on $\T$ given by $\rho'$.
\end{cor}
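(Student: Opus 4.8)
The plan is to verify \eqref{eq:MoveBubbles} as an equality of bimodule maps under $\rho'$, and the point is that essentially all the genuine computation has already been done elsewhere in this subsection: for each fixed $n\ge 0$ both sides of \eqref{eq:MoveBubbles} act on $\mod{n+2}{n+2}{n}$, and the hard inputs are Lemma \ref{lem:kek} and Lemma \ref{lem:kp1ek} — which rewrite $[2]$ times a cup-cap carrying the isotypic projection $\circled{k}$ (resp.\ $\circled{k+1}$) on \emph{both} up-strands as a cup-cap decorated on a single strand plus box-correction terms — together with the slide relations \eqref{eq:PThroughUpN} and the orthogonality $\circled{k}\,\circled{m}\tleq\delta_{km}\,\circled{k}$ that follows from Corollary \ref{cor:isotypic} (equivalently, the first relation of \eqref{eq:BubbleBoxRelations}).

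First I would rewrite the left-hand side of \eqref{eq:MoveBubbles} using \eqref{eq:PThroughUpN}: sliding the central $\circled{k}$ across the up-strands — equivalently, inserting $\sum_j\circled{j}\tleq 1$ from Remark \ref{rem:infsumN} into the adjacent region and discarding, by orthogonality, every projection that annihilates the relevant isotypic component — brings it to the left-hand side of the chain \eqref{eq:MoveBubblesSetup}, which now carries $\circled{k}$ on both the interior and the exterior strand. Next I would apply Lemma \ref{lem:kek} to the summand with $\circled{k}$ on both up-strands and Lemma \ref{lem:kp1ek} (after the shift $k\mapsto k-1$) to the summand with $\circled{k}$ and $\circled{k-1}$, then use \eqref{eq:PThroughUpN} once more to move the leftover bubbles into exactly the regions displayed in \eqref{eq:MoveBubbles}. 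Collecting the resulting terms — at most six survive after a final application of orthogonality — gives the claimed right-hand side; this amounts to observing that \eqref{eq:MoveBubblesSetup}, reorganised, is \eqref{eq:MoveBubbles}.

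Two bookkeeping points then close the argument. First, the box-inverses $(\boxed{m})^{-1}$ produced by Lemmas \ref{lem:kek} and \ref{lem:kp1ek} must be matched with the box-daggers in \eqref{eq:MoveBubbles}: under $\rho'$, $\boxed{m}$ in a region labeled $n'$ acts by $[n'+m+1]/[n'+m+2]$, so by the identity $[2][a]=[a+1]+[a-1]$ its inverse acts by $[2]-[n'+m]/[n'+m+1]$, which is precisely the scalar by which $(\boxed{m})^{\dagger}$ acts once the clockwise circle in its defining expression is evaluated to $[2]$ via \eqref{eq:CWCircles}; thus the two notations coincide wherever the relevant box lies in $\Wact'$, and where it does not (i.e.\ $n'+m+2\le 0$) the well-definedness argument for $\Wact'$ already forces that term to vanish in $\Wtc'_0$. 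Second — and this is the only real obstacle — one must track region labels carefully, since a bubble keeps its index while the region it occupies shifts by $\pm 1$ each time it crosses an up-strand, so the quantum integer carried by the box attached to it shifts correspondingly; once the labels are pinned down, equating the scalars on the two sides is a direct computation of the same kind as in the proofs of Lemmas \ref{lem:kek}, \ref{lem:kp1ek}, and \ref{lem:nestedk}.
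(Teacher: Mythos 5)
Your toolkit is the right one --- \eqref{eq:PThroughUpN}, Lemmas \ref{lem:kek} and \ref{lem:kp1ek}, orthogonality of the bubbles, and \eqref{eq:MoveBubblesSetup} are exactly the ingredients the paper combines --- but the way you assemble them has a genuine gap: you drop the $\circled{k+1}$ branch of the expansion. When you slide the interior $\circled{k}$ on the left-hand side of \eqref{eq:MoveBubbles} across an up-strand via \eqref{eq:PThroughUpN} (equivalently, insert $\sum_j\circled{j}=1$ in the adjacent region), you do not obtain a single diagram ``with $\circled{k}$ on both the interior and the exterior'': two projections survive, giving a sum of a term with $\circled{k}$ and a term with $\circled{k+1}$ in the exterior region. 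Only the first of these is the left-hand side of \eqref{eq:MoveBubblesSetup}. Your outline then processes only the configurations $(\circled{k},\circled{k})$ and $(\circled{k},\circled{k-1})$ (Lemma \ref{lem:kek}, and Lemma \ref{lem:kp1ek} shifted by $k\mapsto k-1$), so nothing in your computation can produce the three terms of the right-hand side of \eqref{eq:MoveBubbles} that contain $\circled{k+1}$: the wide cup-cap term with $\circled{k+1}$ in the top region and $\circled{k}$ on the right, and the two strand terms carrying $\left(\,\boxed{-2k}\,\right)^{\dagger}$ and $\boxed{-2k-1}$.

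For the same reason your concluding claim that ``\eqref{eq:MoveBubblesSetup}, reorganised, is \eqref{eq:MoveBubbles}'' is false: \eqref{eq:MoveBubblesSetup} is \eqref{eq:MoveBubbles} composed with the projection $\circled{k}$ placed in the exterior region, which kills exactly those $\circled{k+1}$ terms (this is what the remark immediately following the corollary records, and it also shows the corollary carries strictly more information than \eqref{eq:MoveBubblesSetup}). To close the gap you must also treat the $\circled{k+1}$ summand: expand its remaining middle region by \eqref{eq:PThroughUpN}, handle the $(\circled{k+1},\circled{k})$ configuration by the first identity of Lemma \ref{lem:kp1ek} at parameter $k$ (no shift), and handle the $(\circled{k},\circled{k})$ configuration cut down by $\circled{k+1}$ on the left via Lemma \ref{lem:kek}, whose first term with $\left(\,\boxed{-2k}\,\right)^{-1}$ is the one surviving orthogonality; these supply the three missing terms, and this is exactly how the paper's proof proceeds. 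Your two bookkeeping points --- that $\left(\,\boxed{m}\,\right)^{\dagger}$ and $\left(\,\boxed{m}\,\right)^{-1}$ act by the same scalar under $\rho'$, and that region labels shift the quantum integers --- are correct and are not the issue.
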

\begin{proof}
	The relation follows from applying \eqref{eq:PThroughUpN}  to the left side of the proposed equality, followed by relations in Lemmas \ref{lem:kek} and \ref{lem:kp1ek} and equation \eqref{eq:MoveBubblesSetup}.
\end{proof}

Note that applying any of $\circled{k-1}$, $\circled{k}$, or $\circled{k+1}$ in the appropriate region to Corollary \ref{cor:MoveBubblesN} recovers the relations given in Lemmas \ref{lem:kek} and \ref{lem:kp1ek} and \eqref{eq:MoveBubblesSetup}. 

\begin{lem}\label{lem:downKupN}
	The identities hold
	\begin{align*}
		\tikzc{\vseg{-2}{2}\up[top>]{0}{2}\down[bottom<]{-2}{0}\vseg{0}{0}\drawP{-1}{2}{k}\drawP{1}{2}{k}\drawP{-3}{2}{k}}
		~\tleq~
		\tikzc{
			\vseg{0}{5}\up[top>]{2}{5}    
			\ncup{0}{5}
			\ncap{0}{2}
			\vseg{2}{0}\down{0}{0}
			\drawP{1}{5}{k}
			\drawP{1}{2}{k}
			\drawP{4}{5}{k}
			\drawsquare[-2k]{4}{2}
		}
		&&
		\tikzc{\vseg{-2}{2}\up[top>]{0}{2}\down[bottom<]{-2}{0}\vseg{0}{0}\drawP{-1}{2}{k}\drawP{2}{2}{k-1}\drawP{-4}{2}{k-1}}
		~\tleq~
		\tikzc{
			\vseg{0}{5}\up[top>]{2}{5}    
			\ncup{0}{5}
			\ncap{0}{2}
			\vseg{2}{0}\down{0}{0}
			\drawP{1}{5}{k}
			\drawP{1}{2}{k}
			\drawP{3.5}{3.5}{k-1}
		}\left(~
		\tikzc{\drawsquare[-2k+1]{0}{0}}
		~\right)^{-1}
	\end{align*}
\end{lem}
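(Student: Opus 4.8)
The plan is to verify each identity directly on $\T$, in the same style as Lemmas \ref{lem:nestedk}, \ref{lem:kek} and \ref{lem:kp1ek}. Fix $n\geq 0$. Both sides of each identity are endomorphisms of the bimodule $\rho'(Q_{-+})=\mod{n}{\TL_{n+1}}{n}$, i.e. of $\TL_{n+1}$ regarded as a $(\TL_n,\TL_n)$-bimodule via $\TL_n\hookrightarrow\TL_{n+1}$. First I would dispose of the degenerate ranges: in the first identity, if $n<2k$ then the bubbles $\circled{k}$ sitting in the two $n$-labeled regions vanish by Corollary \ref{cor:isotypic} and both sides are the zero map, so we may assume $n\geq 2k$; in the second identity it suffices to treat $n\geq 2k-1$ (otherwise the bubble $\circled{k}$ in the middle, $(n{+}1)$-labeled region already forces both sides to vanish), and then $\boxed{-2k+1}$ acts by an invertible scalar so $\bigl(\boxed{-2k+1}\bigr)^{-1}$ makes sense.

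Next I would translate both sides into explicit bimodule maps. By Corollary \ref{cor:isotypic} a bubble $\circled{k}$ in a region labeled $n$ (resp. $n+1$) acts as multiplication by the idempotent $z^n_k:=\sum_{p\in P^n_{n-2k}}e_{p,p}\in\TL_n$ (resp. by $z^{n+1}_k\in\TL_{n+1}$, which is central). Thus the left-hand side of the first identity, being the identity $2$-morphism of $Q_{-+}$ decorated with $\circled{k}$ in each of its three regions (the middle one labeled $n+1$), acts as $x\mapsto z^n_k\,z^{n+1}_k\,x\,z^n_k$. On the right-hand side the zigzag factors $Q_{-+}\to\mathbbm{1}\to Q_{-+}$ through the trivial $1$-morphism: by the assignments of Theorem \ref{thm:rep} the lower cap is $x\mapsto\ptr_{n+1}(x)$, the upper cup is the inclusion $\TL_n\hookrightarrow\TL_{n+1}$, and the box $\boxed{-2k}$ in the $n$-labeled region contributes the scalar $[n-2k+1]/[n-2k+2]$; inserting the three bubbles, one finds that the right-hand side acts as
\[
x\ \longmapsto\ \frac{[n-2k+1]}{[n-2k+2]}\;z^{n+1}_k\,z^n_k\;\ptr_{n+1}\bigl(z^{n+1}_k\,x\bigr).
\]

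Then I would compare the two maps on the matrix-unit basis $\{e_{p',r'}\}$ of $\TL_{n+1}$ (with $p',r'\in P^{n+1}$, $p'_{n+1}=r'_{n+1}$). Both maps annihilate $e_{p',r'}$ unless $p'_{n+1}=r'_{n+1}=n+1-2k$ and $p'_n=r'_n=n-2k$, in which case write $p'=(p'',n-2k+1)$, $r'=(r'',n-2k+1)$ with $p'',r''\in P^n_{n-2k}$. For such $e_{p',r'}$ the left-hand side returns $e_{p',r'}$ directly, while on the right-hand side $z^{n+1}_k e_{p',r'}=e_{p',r'}$ and the partial-trace formula $\ptr_{n+1}(e_{p',r'})=\tfrac{[n-2k+2]}{[n-2k+1]}\,e_{p'',r''}$ — the matrix-unit refinement of $\ptr_{n+1}(f^{(n+1)})=\tfrac{[n+2]}{[n+1]}f^{(n)}$ together with the $\TL_n$-bilinearity of $\ptr_{n+1}$, exactly as used in Lemma \ref{lem:nestedk} — so that, since $z^{n+1}_k z^n_k e_{p'',r''}=e_{p',r'}$, the box scalar exactly cancels the trace scalar and the right-hand side also returns $e_{p',r'}$. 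This settles the first identity. The second is the same computation with $\circled{k-1}$ (i.e.\ $z^n_{k-1}$) in place of $\circled{k}$ in the $n$-labeled regions, so the surviving basis elements are those $e_{p',r'}$ whose last step goes \emph{down}, namely $p'=(p'',n-2k+1)$ with $p''\in P^n_{n+2-2k}$; the partial trace then produces the factor $[n-2k+2]/[n-2k+3]$, which is cancelled by $\bigl(\boxed{-2k+1}\bigr)^{-1}=\bigl(\boxed{-2k+1}\bigr)^{\dagger}$, acting (by \eqref{eq:BoxInverse} and Subsection \ref{ssec:BoxAction}) as $[n-2k+3]/[n-2k+2]$.

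The main obstacle is bookkeeping rather than substance: one must correctly read off the integer labelling each region of the two diagrams (hence which $\TL$-block each bubble is the projection onto) and confirm that the decorated zigzag really is the composite through $\mathbbm{1}$ assembled from $\ptr_{n+1}$, the inclusion, and a single box, while handling uniformly the degenerate ranges of $(n,k)$ on which the decorating bubbles are absent. Once these are pinned down the computation is routine and parallels the earlier verifications.
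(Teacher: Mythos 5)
Your proposal is correct and follows essentially the same route as the paper: both sides are evaluated as endomorphisms of the bimodule $\mod{n}{n+1}{n}$, the bubbles are interpreted via Corollary \ref{cor:isotypic} as the isotypic projections $\sum e_{p,p}$, and the key point is that the box scalar $\tfrac{[n-2k+1]}{[n-2k+2]}$ (resp.\ the inverse box) cancels the factor produced by $\ptr_{n+1}$ on the surviving matrix units. The only cosmetic difference is that you check the maps on the matrix-unit basis $e_{p',r'}$ and dispose of the degenerate ranges of $(n,k)$ explicitly, whereas the paper expands a general $x$ in coefficients $x^t_p$; these are the same computation.
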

\begin{proof}
Fix $n\geq0$ and let $x\in\mod{n}{n+1}{n}$. If $n+1-2k<0$, then both claims follow from \eqref{eq:newrel} as each diagram is the zero morphism. We prove the first claim, and the other is proven similarly. The left side of the proposed equality maps $x$ to
	\begin{align*}
		\sum_{r\in P^{n}_{n-2k}}e_{r,r}
		\cdot x\cdot
		\sum_{p\in P^{n+1}_{n+1-2k}}e_{p,p}
		\sum_{s\in P^{n}_{n-2k}}e_{s,s}
		=
		\sum_{r\in P^{n}_{n-2k}}e_{r,r}
		\sum_{t,p\in P^{n+1}_{n+1-2k}}x^t_pe_{t,p}
		\sum_{s\in P^{n}_{n-2k}}e_{s,s}
		=
		\sum_{\substack{t,p\in P^{n+1}_{n+1-2k}\\t_{n-1}=p_{n-1}=n-2k}}
		x^t_pe_{t,p}
	\end{align*}
	where coefficients $x_p^t\in\bC(q)$ satisfy $xv_p=\sum_{t\in P^{n+1}_{n+1-2k}}x^t_pv_t$ for all $p\in P^{n+1}_{n+1-2k}$.
	
	Whereas on the right side, $x$ maps to\allowdisplaybreaks
	\begin{gather*}
		\frac{[n-2k+1]}{[n-2k+2]}
		\sum_{r\in P^{n}_{n-2k}}e_{r,r}
		\ptr_{n+1}\left(
		x\cdot
		\sum_{p\in P^{n+1}_{n+1-2k}}e_{p,p}
		\right)
		\sum_{s\in P^{n}_{n+1-2k}}e_{s,s}
		\\=
		\frac{[n-2k+1]}{[n-2k+2]}
		\ptr_{n+1}\left(
		\sum_{r\in P^{n}_{n-2k}}e_{r,r}
		\sum_{t,p\in P^{n+1}_{n+1-2k}}x^t_pe_{t,p}
		\right)
		\sum_{s\in P^{n}_{n+1-2k}}e_{s,s}
		\\=
		\frac{[n-2k+1]}{[n-2k+2]}
		\sum_{\substack{t,p\in P^{n+1}_{n+1-2k}\\t_{n-1}=p_{n-1}=n-2k}}x^t_p
		\frac{[n+2-2k]}{[n+1-2k]}
		e_{t',p'}
		\sum_{s\in P^{n}_{n+1-2k}}e_{s,s}
		\\=
		\sum_{\substack{t,p\in P^{n+1}_{n+1-2k}\\t_{n-1}=p_{n-1}=n-2k}}x^t_p
		e_{t,p}
	\end{gather*}
	Thus, equality holds. 
\end{proof}

\section{Basis Statements}\label{sec:bases}
We describe bases for $\End_{\uWeyl}(\mathbbm{1})$ and certain subalgebras of morphism spaces of $\Hom_{\Wact}(n,-)$. The arguments for proving these results rely on the action of $\Wact$ on $\End(\T)$. 

\subsection{The Box Algebra}\label{ssec:boxalg}
 We first describe the subalgebras generated by boxes. This discussion is relevant for understanding a basis of the subalgebra generated by boxes in $\uWeyl$. It does not apply to $\Wact$ where boxes have been identified with scalars.
{
\begin{defn}
Let $\boxalg$ denote the unital commutative algebra over $\mathbb{C}(q)$ with generators $\boxy{k}$ for $k\geq 0$ and relations  $\boxy{k+1}\boxy{k}=[2]\boxy{k+1}-1$. We call $\boxalg$ the \emph{box algebra}.
\end{defn}
}
 
Introduce the notation
 $
 \bdesc{k}{m}=\boxy{k}\boxy{k-1} \cdots\boxy{k-m}\in\boxalg
 $
 for $k\geq 0$ and $0\leq m\leq k$. Recall that nonnegative quantum integers may be defined recursively by
 \begin{align}
 \label{eq:mRecursion}
 	[m]=[2][m-1]-[m-2]
\end{align}
with $[1]=1$ and $[0]=0$, and where $[2]=q+q^{-1}$.

\begin{lem}\label{lem:BoxDesc}
    Fix $k\geq 0$, and $0\leq m\leq k$. Then the equality  
    \begin{align*}
        \bdesc{k}{m}=[m+1] \boxy{k}-[m]
    \end{align*}
    holds in $\boxalg$.
\end{lem}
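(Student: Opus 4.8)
The plan is to prove the identity by a straightforward induction on $m$, peeling the leading generator $\boxy{k}$ off the front of the descending product at each stage.

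The base case $m=0$ is immediate: $\bdesc{k}{0}=\boxy{k}$, and since $\{0\}=1$ and $\{-1\}=0$ the right-hand side $\{0\}\boxy{k}-\{-1\}$ is also $\boxy{k}$. It is also worth recording $m=1$ directly from the defining relation of $\boxalg_{\geq r}$, namely $\bdesc{k}{1}=\boxy{k}\boxy{k-1}=[2]\boxy{k}-1=\{1\}\boxy{k}-\{0\}$, using $\{1\}=[2]$.

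For the inductive step I would fix $m\geq 1$, assume the formula for $m-1$, and use the obvious factorization $\bdesc{k}{m}=\boxy{k}\cdot\bdesc{k-1}{m-1}$ coming straight from the definition of the descending product. Since $k-r\geq m\geq 1$, we have $k-1\geq r$ and $(k-1)-r\geq m-1\geq 0$, so the inductive hypothesis applies and gives $\bdesc{k-1}{m-1}=\{m-1\}\boxy{k-1}-\{m-2\}$. Multiplying on the left by $\boxy{k}$ and then substituting the relation $\boxy{k}\boxy{k-1}=[2]\boxy{k}-1$ yields
\[
\bdesc{k}{m}=\{m-1\}\bigl([2]\boxy{k}-1\bigr)-\{m-2\}\,\boxy{k}=\bigl([2]\{m-1\}-\{m-2\}\bigr)\boxy{k}-\{m-1\},
\]
and by the recursion \eqref{eq:mRecursion} the coefficient of $\boxy{k}$ is exactly $\{m\}$, so $\bdesc{k}{m}=\{m\}\boxy{k}-\{m-1\}$, completing the induction.

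I do not expect a real obstacle here: this is a routine one-variable induction inside the commutative algebra $\boxalg_{\geq r}$, and the only care needed is bookkeeping — verifying the index inequalities $k-1\geq r$ and $(k-1)-r\geq m-1$ so the inductive hypothesis is legitimately applicable to $\bdesc{k-1}{m-1}$, and keeping the convention $\{j\}=0$ for $j<0$ in the low cases. As an independent sanity check one can evaluate both sides under the representation $\rho'$, where in a region labelled $n$ the box $\boxed{k}$ acts by $[n+k+1]/[n+k+2]$: the left-hand side telescopes to $[n+k-m+1]/[n+k+2]$, and matching this with $\{m\}\,[n+k+1]/[n+k+2]-\{m-1\}$ reproduces precisely the recursion for $\{m\}$.
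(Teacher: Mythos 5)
Your proposal is the same induction the paper itself runs: peel $\boxy{k}$ off the front of $\bdesc{k}{m}=\boxy{k}\bdesc{k-1}{m-1}$, apply the inductive hypothesis, and use $\boxy{k}\boxy{k-1}=[2]\boxy{k}-1$. Your algebra up to the last line is right, but the final appeal to \eqref{eq:mRecursion} does not go through as literally stated: your computation produces the coefficient $[2]\{m-1\}-\{m-2\}$ in front of $\boxy{k}$, whereas \eqref{eq:mRecursion} as printed reads $\{m\}=[2]\{m-1\}+\{m-2\}$, so the identification of that coefficient with $\{m\}$ fails by a sign. This is not an error you introduced so much as one you inherited: the lemma and the printed recursion are mutually inconsistent in the source, and the paper's own inductive step compensates by silently quoting the hypothesis as $\{m\}\boxy{k-1}+\{m-1\}$ (opposite sign to the statement being proved) so as to land on the $+$ recursion.

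The low cases settle which sign is correct. Directly, $\bdesc{k}{2}=\boxy{k}\bigl([2]\boxy{k-1}-1\bigr)=\bigl([2]^2-1\bigr)\boxy{k}-[2]$, so one needs $\{2\}=[2]^2-1$, i.e.\ $\{m\}=[m+1]$ satisfying $\{m\}=[2]\{m-1\}-\{m-2\}$, not the $+$ version. Your representation-theoretic sanity check says the same thing: matching $\{m\}\tfrac{[n+k+1]}{[n+k+2]}-\{m-1\}$ with the telescoped product $\tfrac{[n+k-m+1]}{[n+k+2]}$ forces $\{m\}[n+k+1]-\{m-1\}[n+k+2]=[n+k-m+1]$, which pins down $\{m\}=[m+1]$ rather than ``reproducing the recursion'' \eqref{eq:mRecursion} as you claim. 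So your argument is the intended one and is correct once $\{m\}$ is taken to obey the minus-sign recursion (equivalently $\{m\}=[m+1]$); the only defect in your write-up is asserting that the printed \eqref{eq:mRecursion} supplies the final step, and it would strengthen the write-up to flag the sign discrepancy explicitly.
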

 		
\begin{proof}
 We give a proof by induction on $m$ for any given $k\geq 0$. We can easily see that the case $m=0$ holds. Assume that the lemma holds for all pairs $k',m\in\bZ$ such that $0\leq k'\leq k$ and $0\leq m< k'$. Fix $m$ with $0\leq m< k$. Then $\bdesc{k}{m+1}
    =\boxy{k}\bdesc{k-1}{m}$. Since $b_{k}b_{k-1}=[2]b_{k}-1$, the claim now follows by induction:
\begin{align*}
    \bdesc{k}{m+1}
    &=\boxy{k}\bdesc{k-1}{m}
    =\boxy{k}([m+1] \boxy{k-1}-[m])
    =[m+1]([2]\boxy{k}-1)-[m]\boxy{k}
    =[m+2]\boxy{k}-[m+1].\qedhere
\end{align*}
\end{proof}

\begin{lem}\label{lem:BoxProd}
Fix $k\geq 0$, and $0<m\leq k$. Then the equality
\begin{align*}
    \boxy{k} \boxy{k-m}= \dfrac{1}{[m]}\left([m+1]\boxy{k}+[m-1]\boxy{k-m}-[m]\right).
\end{align*} 
 holds in $\boxalg$. 
\end{lem}
\begin{proof}
    By Lemma \ref{lem:BoxDesc},
\[
        [m+1] \boxy{k}-[m]
        =
        \bdesc{k}{m}
        =
        \bdesc{k}{m-1}\boxy{k-m}
        =
        ([m]\boxy{k}-[m-1])\boxy{k-m}\,.
\]
The claim follows by expanding the above and isolating $\boxy{k} \boxy{k-m}$.
 			\end{proof}
 		
\begin{lem}\label{lem:bPowSer}
    Let $m>-1$ and $p\geq0$ be integers. Then 
    \begin{align*}
 			\left(\frac{[m]}{[m+1]} \right)^p=q^p(1-q^{2m})^p\sum_{i=0}^\infty {i+p-1 \choose p-1 } q^{2i(m+1)}\,.
 			\end{align*}
 			\end{lem}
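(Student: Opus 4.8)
The statement to prove is a clean identity about the formal power series expansion of $\left(\tfrac{[j]}{[j+1]}\right)^p$. The plan is to start from the base case $p=1$ and then bootstrap to general $p$, either by induction or by a direct manipulation of generating functions.

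First I would establish the $p=1$ case. Writing $[j] = \tfrac{q^j - q^{-j}}{q - q^{-1}}$, one computes
\[
\frac{[j]}{[j+1]} = \frac{q^j - q^{-j}}{q^{j+1} - q^{-j-1}} = \frac{q^{-j}(q^{2j} - 1)}{q^{-j-1}(q^{2j+2}-1)} = q\,\frac{q^{2j}-1}{q^{2j+2}-1} = q\,\frac{1 - q^{2j}}{1 - q^{2j+2}}.
\]
Then I would expand the denominator as a geometric series in the variable $q^{2(j+1)}$:
\[
\frac{1}{1 - q^{2(j+1)}} = \sum_{i=0}^\infty q^{2i(j+1)},
\]
which gives $\tfrac{[j]}{[j+1]} = q(1 - q^{2j})\sum_{i=0}^\infty q^{2i(j+1)}$, matching the claimed formula since $\binom{i + 1 - 1}{1 - 1} = \binom{i}{0} = 1$. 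Here the hypothesis $j > -1$ (so $j \ge 0$) is used only to ensure $q^{2j}$ and $q^{2(j+1)}$ are honest power series terms / the expansion is formally valid; treating $q$ as a formal variable, convergence is not an issue.

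Next I would raise this to the $p$-th power. Since $\left(\tfrac{[j]}{[j+1]}\right)^p = q^p (1-q^{2j})^p \left(\sum_{i=0}^\infty q^{2i(j+1)}\right)^p$, it suffices to show $\left(\sum_{i=0}^\infty x^i\right)^p = \sum_{i=0}^\infty \binom{i+p-1}{p-1} x^i$ with $x = q^{2(j+1)}$. This is the classical negative binomial / stars-and-bars identity $\tfrac{1}{(1-x)^p} = \sum_{i \ge 0}\binom{i+p-1}{p-1}x^i$, which can be cited or proved by a one-line induction on $p$ using the Vandermonde-type convolution $\sum_{a+b = i}\binom{a+p-2}{p-2}\binom{b}{0} = \binom{i+p-1}{p-1}$. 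Substituting back yields exactly the right-hand side of the claimed identity.

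The main (and only real) obstacle is bookkeeping: making sure the factor $q^p(1-q^{2j})^p$ is pulled out cleanly and that the geometric-series variable is consistently $q^{2(j+1)}$ rather than $q^{2j}$ — a sign error there would shift every exponent. There is no deep content; the whole proof is the $p=1$ computation followed by the standard power-series identity for $1/(1-x)^p$. If one prefers, the entire statement can instead be obtained by induction on $p$: multiply the claimed expansion for exponent $p$ by the $p=1$ expansion and check the coefficient recursion $\binom{i+p-1}{p-1} = \sum_{\ell=0}^{i}\binom{\ell+p-2}{p-2}$ minus the correction from the $(1-q^{2j})$ factors, but the generating-function route above is cleaner and I would present that.
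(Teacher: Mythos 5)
Your proposal is correct and matches the paper's proof essentially verbatim: both compute $\tfrac{[j]}{[j+1]}=q(1-q^{2j})\sum_{i\geq 0}q^{2i(j+1)}$ and then apply the standard identity $\left(\sum_{i\geq 0}r^i\right)^p=\sum_{i\geq 0}\binom{i+p-1}{p-1}r^i$ with $r=q^{2(j+1)}$. No further comment is needed.
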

 			\begin{proof}
 			Write 
 		\begin{align*}
 		\frac{[m]}{[m+1]}=\frac{q^m-q^{-m}}{q^{m+1}-q^{-(m+1)}}=\frac{q(q^{2m}-1)}{q^{2(m+1)}-1}=q(1-q^{2m})\sum_{i=0}^\infty q^{2i(m+1)}.
 		\end{align*}
    Recall that the exponentiation of a geometric series is given by the series $\left(\sum_{i=0}^\infty r^i\right)^p=\sum_{i=0}^\infty {i+p-1\choose p-1}r^i$. The desired formula now follows by taking $r=q^{2(m+1)}$.
\end{proof}		
 			
For any integers $n$ and $k$ such that $n+k+2>0$, the identity
\begin{align}
    \frac{[n+k+1]}{[n+k+2]}\cdot\frac{[n+k]}{[n+k+1]}
    =
    \frac{[n+k]}{[n+k+2]}
    =\frac{[2][n+k+1]-[n+k+2]}{[n+k+2]}
    =[2]\frac{[n+k+1]}{[n+k+2]}-1
\end{align}
holds. Thus, for $n\geq -1$, there are algebra homomorphisms 
\begin{equation}\label{eq:boxmap}
\begin{aligned}
    \varrho_{n}:\boxalg&\to \bC(q)
    \\
    \boxy{k}&\mapsto\frac{[n+k+1]}{[n+k+2]}
\end{aligned}
\end{equation} 
Observe that if $n\geq 0$, then no generator is mapped to zero. However, this map is not an injection for any $n$.
 			
\begin{prop}\label{prop:BoxBasis}
    There is a basis of $\boxalg$ over $\mathbb{C}(q)$ spanned by
    \begin{align*}
        \{1\}\cup \left\{\boxy{k}^p:k\in\mathbb{Z}_{\geq 0}, p\in\mathbb{Z}_{>0}\right\}.
 			\end{align*}
 			\end{prop}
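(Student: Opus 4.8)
The plan is to prove the two halves of the statement separately: that $\{1\}\cup\{\boxy{k}^p : k\geq r,\ p>0\}$ spans $\boxalg_{\geq r}$, and that it is linearly independent. Spanning will be a formal consequence of the commutation identities already available, while independence will be obtained by producing a large enough family of one‑dimensional representations of $\boxalg_{\geq r}$ to separate these elements.

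For spanning, I would note first that $\boxalg_{\geq r}$ is spanned over $\mathbb{C}[q,q^{-1}]$ by the monomials $\boxy{k_1}^{a_1}\cdots\boxy{k_s}^{a_s}$ with $k_1>k_2>\cdots>k_s\geq r$ and all $a_i\geq 1$, and then induct on the total degree $d=\sum_i a_i$. The cases $d\leq 1$ (the monomial is $1$ or a single $\boxy{k}$) and $s\leq 1$ (the monomial is already a power $\boxy{k}^d$) are immediate, so suppose $d\geq 2$ and $s\geq 2$. Applying Lemma \ref{lem:BoxProd} with $k=k_1$ and $m=k_1-k_2$ — legitimate since $k_2\geq r$ forces $k_1-r\geq m>0$ — rewrites the factor $\boxy{k_1}\boxy{k_2}$ occurring in the monomial as a linear combination of $\boxy{k_1}$, $\boxy{k_2}$, and $1$ (using that $\{m-1\}$ is invertible in the base ring, as is implicit throughout). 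Substituting this and expanding expresses the original monomial as a combination of monomials of total degrees $d-1$ and $d-2$, all of which lie in the claimed span by induction. Hence the claimed set spans $\boxalg_{\geq r}$.

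For linear independence I would use the family of $\mathbb{C}[q,q^{-1}]$‑algebra homomorphisms $\phi_n\colon \boxalg_{\geq r}\to\mathbb{C}(q)$, one for each integer $n>-(r+2)$, given by $\boxy{k}\mapsto [n+k+1]/[n+k+2]$; these respect the relation $\boxy{k}\boxy{k-1}=[2]\boxy{k}-1$ by the identity $[2][n+k+1]=[n+k]+[n+k+2]$ recorded just before the statement, and $[n+k+2]\neq 0$ since $q$ is not a root of unity and $n+k+2>0$. Suppose $c_0\cdot 1+\sum_{(k,p)\in S}c_{k,p}\boxy{k}^p=0$ in $\boxalg_{\geq r}$ for a finite set $S$ and scalars $c_0,c_{k,p}$. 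Writing $y=q^{2n}$, one computes
\[
\phi_n\big(\boxy{k}^p\big)=R_{k,p}(y),\qquad R_{k,p}(y):=\left(\frac{q^{2k+2}y-1}{q^{2k+3}y-q^{-1}}\right)^{p}\in\mathbb{C}(q)(y),
\]
so applying every $\phi_n$ yields $c_0+\sum_{(k,p)\in S}c_{k,p}R_{k,p}(y)=0$ at the infinitely many distinct values $y=q^{2n}$, and hence identically as a rational function of $y$. Now $R_{k,p}$ has a pole of order exactly $p$ at $y=q^{-2k-4}$ (the numerator evaluates to $q^{-2}-1\neq 0$ there), the pole locations $q^{-2k-4}$ are pairwise distinct for distinct $k$, and the constant $1$ contributes no pole. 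Expanding the vanishing rational function in Laurent series at $y=q^{-2k_0-4}$, where $p_0:=\max\{p:(k_0,p)\in S\}$, annihilates every summand except $c_{k_0,p_0}R_{k_0,p_0}$ and so forces $c_{k_0,p_0}=0$; iterating over all $k_0$ appearing in $S$ and descending in $p$ gives $c_{k,p}=0$ for all $(k,p)\in S$, after which $c_0=0$ is clear.

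The spanning argument and the verification that the $\phi_n$ are well defined are routine. The step needing care is the pole bookkeeping in the independence argument: one must confirm that each $R_{k,p}$ genuinely has a pole of order $p$ (not smaller, from an accidental zero of the numerator), that the poles for distinct $k$ are genuinely separated, and that the evaluation points $q^{2n}$ used all avoid these poles — each of which relies on $q$ not being a root of unity. (A parallel $q$‑adic argument could instead use the explicit expansion of Lemma \ref{lem:bPowSer}, writing $[n+k+1]/[n+k+2]=q-q^{2(n+k)+3}\delta+\cdots$ with $\delta$ a unit and comparing $q$‑valuations as $n\to\infty$, but keeping track of the valuations of the coefficients $c_{k,p}$ makes that route messier than the rational‑function version.)
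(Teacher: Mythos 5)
Your argument is correct and rests on the same two pillars as the paper's proof: spanning via Lemma \ref{lem:BoxProd}, and linear independence via the one-dimensional evaluation representations $\boxy{k}\mapsto [n+k+1]/[n+k+2]$ for $n>-(r+2)$. The difference lies in how independence is extracted from these representations. The paper expands each $\left([n+k+1]/[n+k+2]\right)^p$ as a power series via Lemma \ref{lem:bPowSer}, substitutes $x=q^{2n}$, and then runs a delicate bookkeeping argument (bounding the Laurent degrees of the coefficients, isolating the largest index $K^+$, and telescoping binomial coefficients) to reach a contradiction. You instead note that $\phi_n(\boxy{k}^p)$ is the value at $y=q^{2n}$ of the rational function $\left((q^{2k+2}y-1)/(q^{2k+3}y-q^{-1})\right)^p$, so a dependence relation vanishing at infinitely many points $y=q^{2n}$ of the infinite field $\mathbb{C}(q)$ vanishes identically, and you then read off the coefficients from pole orders at the pairwise distinct points $y=q^{-2k-4}$. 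This pole-counting route is shorter and more transparent than the paper's coefficient extraction, and it isolates exactly where genericity of $q$ enters (distinctness of the poles, nonvanishing of $q^{-2}-1$, and the evaluation points avoiding the poles because $n+k+2>0$); your spanning step is also more explicit (induction on total degree) than the paper's one-line appeal to Lemma \ref{lem:BoxProd}. One caveat you share with the paper rather than introduce: Lemma \ref{lem:BoxProd} has coefficients $\{m\}/\{m-1\}$ and $\{m-2\}/\{m-1\}$, and $\{m-1\}$ (e.g.\ $\{1\}=[2]$) is not a unit of $\mathbb{C}[q,q^{-1}]$, so what the defining relations literally give is $\{m-1\}\boxy{k}\boxy{k-m}=\{m\}\boxy{k}+\{m-2\}\boxy{k-m}-\{m-1\}$; both your spanning argument and the paper's implicitly work after inverting the elements $\{m\}$ (e.g.\ over $\mathbb{C}(q)$). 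You at least flag this invertibility assumption explicitly, but be aware it is a hypothesis not supplied by the stated base ring.
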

 		\begin{proof}
 		We determine that $\{1\}\cup\left\{\boxy{k}^p:k\in\mathbb{Z}_{\geq 0}, p\in\mathbb{Z}_{>0}\right\}$ is a spanning set by Lemma \ref{lem:BoxProd}, which shows the product of any two distinct $\boxy{k}$ and $\boxy{l}$ can be expressed as a $\mathbb{C}(q)$-linear combination of $\boxy{k}$, $\boxy{l}$, and $1$.
 		
 		To prove this spanning set is a basis, suppose there is a linear dependence $\sum_{k,p}a_{k,p}\boxy{k}^p=0$, where all but finitely many $a_{k,p}\in \mathbb{C}(q)$ are nonzero. Assume all $a_{k,p}\in\bC[q]$ by first clearing denominators then multiplying by some power of $q$. Let $a_{0,0}$ denote the coefficient of $1\in \boxalg$. 
 The image of this relation under $\varrho_{n}$ as in \eqref{eq:boxmap} implies $\sum_{k,p}a_{k,p}\left(\frac{[n+k+1]}{[n+k+2]}\right)^p=0$ for all $n>-2$.
 		
By Lemma \ref{lem:bPowSer}, this relation becomes
\begin{align*}
    \sum_{k,p}a_{k,p} q^p(1-q^{2(n+k+1)})^p
 		\sum_{i=0}^\infty {i+p-1 \choose p-1 }q^{2i(n+k+2)}=0\,.
 		\end{align*}
 			Set $a'_{k,p}=a_{k,p}q^p$ and  $x=x(n)=q^{2n}$. Thus,
 		\begin{align*}
 		\sum_{k,p}a'_{k,p} (1-xq^{2(k+1))})^p
 		\sum_{i=0}^\infty {i+p-1 \choose p-1 }q^{2i(k+2)}x^i=0\,.
 			\end{align*}
 		In this way, the coefficient of $x^j$ must be zero for all $j\geq 0$. We compute
 		\begin{gather*}
 		y_{k,p}(x,q):=(1-xq^{2(k+1))})^p
 			\sum_{i=0}^\infty {i+p-1 \choose p-1 }q^{2i(k+2)}x^i
 			\\=
 			\sum_{i=0}^p
 			\left(
 		\sum_{j=0}^i
 		{p\choose j}{i-j+p-1\choose i-j}q^{2(i-j)}
 		\right)q^{2i(k+1)}x^i
 			+
 		\sum_{i=p+1}^\infty
 		\left(
 		\sum_{j=0}^p
 			{p\choose j}{i-j+p-1\choose i-j}q^{2(i-j)}
 		\right)q^{2i(k+1)}x^i\,.
 		\end{gather*}
 		Let $P$ (and $K$) be the maximum value of $p$ (and $k$) among all nonzero $a'_{k,p}$ in the relation. Set $L=\max_{k,p}(\deg_q(a'_{k,p}))$ and $\ell=\max_{p}(\deg_q(a'_{K,p}))$.
 			
 			For a given $k$ and $p$, the coefficient of $x^i$ in the power series $y_{k,p}(x,q)$ is a {$\bC$-linear} combination of $q^{2i(k+2)}, q^{2i(k+2)-2}, \dots, q^{2i(k+2)-2p}$. For each $i$, suppose that these exists $k\neq K$ and $j,p$ such that $\deg_q(a'_{k,p}q^{2i(k+2)-2j})\geq \deg_q(q^{\ell+2i(K+2)})=\ell+2i(K+2)$. In which case $\deg_q(a'_{k,p}q^{-2j-\ell})\geq 2i(K-k)\geq 2i$. Since $\deg_q(a'_{k,p}q^{-2j-\ell})$ is independent of $i$ and is at most $L$, our above supposition fails for $i>L$. Thus, only terms in the expansion of some $a'_{K,p'}y_{K,p'}$ can cancel the $q^{\ell+2i(K+2)}x^i$ term in $a'_{K,p}y_{K,p}$.

            As $
            \deg_q(a'_{K,p})\leq \ell$ for all $p$, let $\hat a_{p}$ be the coefficient of $q^\ell$ in $a'_{K,p}$ for each $p$. Comparing coefficients of $q^{\ell+2i(K+2)} x^i$ in the relation, i.e. $k=K$ and $j=0$, we determine $\sum_{p}\hat a_{p}{i+p-1\choose i}=0$ for all $i>L$.

            Let $\hat P$ be the largest $p$ such that $\hat a_{p}$ is nonzero. Consider $i=L+\hat P$. By the binomial coefficient identity ${i+j\choose i}={i+j-1\choose i}+{i+j-1\choose i-1}$, we have
 			\begin{align*}
 		0&=
 	\sum_{p\leq \hat P}\hat a_{p}{(L+\hat P)+p-1\choose L+\hat P}
 	\\&=
    \sum_{p\leq \hat P}\hat a_{p}{(L+\hat P)+p-2\choose L+\hat P}
    +
 	\cancel{\sum_{p\leq \hat P}\hat a_{p}{(L+\hat P-1)+p-1\choose L+\hat P-1}}
    \end{align*}
    and iterating on this
    \begin{align*}
 		0=
 			\sum_{p\leq \hat P}\hat a_{p}{(L+\hat P)+p-\hat P\choose L+\hat P}
 			=
 			\sum_{p\leq \hat P}\hat a_{p}{L+p\choose L+\hat P}=\hat a_{\hat P}.
 			\end{align*}
 			Thus, we have reached a contradiction as we have assumed $\hat a_{\hat P}\neq0$. Hence there are no relations among the proposed basis elements.
 			\end{proof}
 			
\begin{prop}
    The map $\psi_{0,\boxalg}:\boxalg\to\End_{\uWeyl}(\mathbbm{1})$ which sends $\boxy{k}$ to the box labeled $k$ is an injection.
\end{prop}
\begin{proof}
Suppose that $\psi_{0,\boxalg}(\sum_{k,p}a_{k,p}\boxy{k}^p)=0$, where all but finitely many $a_{k,p}\in \mathbb{C}(q)$ are nonzero.
For each $n\geq0$ recall the homomorphisms $G_{n}:\End_{\uWeyl}(\mathbbm{1})\to2\Hom_{\Wact}(n,n)$ defined in Remark \ref{rem:1Weyl}. Recall the maps $\varrho$ and $\rho$ defined in \eqref{eq:boxmap} and preceding Theorem \ref{thm:rep} respectively. Let $\rho_n$ be the restriction of $\rho$ to $\End_{\uWeyl}(\mathbbm{1})$ with target $\End(\TLn\mathrm{-mod})$. There is a commutative diagram
    \begin{equation*}
        \begin{tikzcd}
\boxalg \arrow[r,"\psi_{0,\boxalg}"] \arrow[d,swap,"\varrho_{n}"] &
\End_{\uWeyl}(\mathbbm{1})
\arrow[d,"\rho_n\circ G_n"]
\\
\bC(q) \arrow[r,"\times"] & \End(\TLn\mathrm{-mod})
\end{tikzcd}
    \end{equation*}
    that maps $\boxy{k}$ to the endomorphism which multiplies by $\frac{[n+k+1]}{[n+k+2]}$. As shown in Proposition \ref{prop:BoxBasis}, the collection of maps $\varrho_n$ distinguishes all elements of $\boxalg$. That is, for all $b\in\boxalg$ such that $b\neq 0$, then there exists $n$ such that $\varrho_n(b)\neq 0$. Thus, all $a_{k,p}$ are zero and therefore $\psi_{0,\boxalg}$ is an injection.
\end{proof}

\subsection{Endomorphism Algebras}\label{ssec:endalg}
We give a description of $\End_{\uWeyl}(\mathbbm{1})$ and $\End_{\Wact}(Q^{\otimes s}_{+,n})$ as algebras.
\begin{defn}
    Let $\projalg$ denote the commutative algebra of orthogonal idempotents $\langle c_0, c_1, \dots  \mid c_kc_l=\delta_{kl}c_{k}\rangle$ over $\mathbb{C}(q)$. Let $\projalg_{n}$ be the subquotient generated by $c_0,c_1,\dots, c_{\floor{n/2}}$ with the additional relation $1=c_0+c_1+\dots+ c_{\floor{n/2}}$.
\end{defn} 

\begin{prop}\label{prop:projalginj}
   The natural map $\psi_{0,\projalg,n}:\projalg_{n}\to \End_{\Wact}(\mathbbm{1}_n)$ is an injection.
\end{prop}
\begin{proof}
    To prove that $\psi_{0,\projalg,n}$ is injective, assume that $\psi_{0,\projalg,n}(\sum_k a_k c_k)=0$ where each $a_k\in\mathbb{C}(q)$. Since $\psi_{0,\projalg,n}$ is a homomorphism to $\Hom_{\Wact}(n,n)$, we consider the composition of $\psi_{0,\projalg,n}$ with $\rho_n$. Then each $c_k$ acts as the projection onto the isotypic component of type $W^n_{n-2k}$ according to Corollary \ref{cor:isotypic}. Since $k\leq\floor{n/2}$, the generators $c_0, c_1, \dots, c_{\floor{n/2}}$ act by nonzero projections onto distinct simple summands. Therefore, they act linearly independently on the direct sum of modules $\bigoplus_{i=0}^{\floor{n/2}} W^n_{n-2i}$. It now follows that the coefficients $a_k$ must all be zero.
\end{proof}

\begin{cor}
     Let $\psi_{0,\projalg}:\projalg\to\End_{\uWeyl}(\mathbbm{1})$ be the map which sends $c_k$ to the bubble labeled $k$. Then $\psi_{0,\projalg}$ is an injection.
\end{cor}
\begin{proof}
    The proof is similar to that of Proposition \ref{prop:projalginj}. By composing $\psi_{0,\projalg}$ with $\rho_K\circ G_K$, with $G_K$ as defined in Remark \ref{rem:1Weyl}, for $K$ sufficiently large, the generators $c_k$ act linearly independently on the $\TL_K$-module $\bigoplus^{\floor{K/2}}_{k=0}W^K_{K-2k}$. 
\end{proof}

\begin{prop}\label{prop:projalgiso}
    The map $\psi_{0,\projalg,n}$ is an isomorphism.
\end{prop}
\begin{proof}
    It remains to show that $\psi_{0,\projalg,n}$ is a surjection. Any endomorphism of $\mathbbm{1}_n$ is a $\bC(q)$-linear combination of products of bubbles $\circled{k}$ and oriented trivalent graphs without boundary points. Each edge of each graph is either solid or doubled, and each vertex joins two solid segments with the same orientation and one doubled segment with the opposite orientation. Here orientation means outward or inward with respect to the vertex. Each region in the complement of a graph contains at most one bubble $\circled{k}$ by \eqref{eq:BubbleBoxRelations}.
    
    Notice that each connected component of a solid (non-doubled strand) in a graph is a closed curve with an even number of vertices (possibly zero). Consider such a component which also contains no solid curves in the region it bounds. Resolve any closed doubled curves on its interior according to Lemma \ref{lem:DoubledCircles} and evaluate any resulting boxes according to \eqref{eq:newrelBox}. If the solid curve has any doubled segments on its interior, consider a region bounded by the solid curve and a doubled edge which has no other doubled edges on its interior. There may be an even number of vertices on the boundary of this region, at each such vertex there is a doubled segment on the exterior of the region. These vertices may be removed in pairs using \eqref{eq:AlternatingCupCaps}. Now apply \eqref{eq:TraceE}, and Lemma \ref{lem:PThroughE} if necessary, to reduce the number of doubled edges in the region bounded by the solid curve. Repeat until there are no such edges remaining. Relation \eqref{eq:AlternatingCupCaps} may be applied to remove any remaining pairs of exterior doubled edges from the solid closed curve. The solid curve has been reduced to a $\bC(q)$ multiple of an oriented circle, which can be further simplified using \eqref{eq:CCWCircles} and \eqref{eq:CWCircles}.
 		
 		Apply the above to remove all solid curves from the graph. The resulting diagram may contain closed doubled curves which can be simplified using \eqref{eq:DoubledRelations}. The resulting diagram is now a linear combination of products of bubbles. Products may be simplified using \eqref{eq:BubbleBoxRelations}, by \eqref{eq:newrel} only bubbles with sufficiently small index are nonzero, and by \eqref{eq:SumBubbles} sums of bubbles may be simplified to $1$. Thus, $\psi_0$ is a surjection.
 	\end{proof}

  \begin{cor}
      The map $\psi_{0,\boxalg}\otimes \psi_{0,\projalg}:\boxalg\otimes\projalg\to\End_{\uWeyl}(\mathbbm{1})$ is an isomorphism of commutative algebras.
  \end{cor}
  \begin{proof}
      The proof is identical to that of Proposition \ref{prop:projalgiso} with the exception that boxes are not evaluated as constants, but are instead moved to the unbounded region of any given diagram using \eqref{eq:BubbleBoxRelations}.
  \end{proof}

\begin{defn}
    For $n>0$, write $\projalg_{n,s}$ to denote the unital commutative algebra generated by $c_{i,j}$ for $0\leq i\leq \floor{(n+j)/2}$ and $0\leq j\leq s$ with relations 
\begin{align}
    c_{i,j}c_{l,j}
    &=\delta_{il}c_{i,j}, 
    &
    \sum_{i=0}^{\floor{(n+j)/2}}c_{i,j}&=1
    \\
    \delta_{j>0}c_{i,j}
    &=c_{i,j}(c_{i,j-1}+c_{i-1,j-1}),
    &
    \delta_{j<s}c_{i,j}
    &=(c_{i,j+1}+c_{i+1,j+1})c_{i,j} \label{eq:ProjAdjRel}
\end{align}
where $c_{i,j}$ is assumed to be zero if $i<0$ or $i>\floor{(n+j)/2}$. 
\end{defn}
For each $0\leq j\leq s$, there is an inclusion of $\projalg_{n+j}\hookrightarrow \projalg_{n,s}$ which maps the generator $c_i$ to $c_{i,j}$. Indeed this map is an inclusion, as
\[
c_{i,j}=\prod_{k=-j}^{s-j}c_{i+k,j+k}+(\text{terms with some factor $c_{i',j'}$ where each $|i'-j'|<|i-j|$})\,.
\]
In this way we may present $\projalg_{n,s}$ as $\projalg_{n+s}\otimes \cdots \otimes \projalg_{n+1}\otimes \projalg_{n}$ together with the relations in \eqref{eq:ProjAdjRel}. 

\begin{lem}\label{lem:CBasis}
    A basis of $\projalg_{n,s}$ given by sequences
\begin{align*}
    \{c_{\boldsymbol{k}}=c_{k_s,s}\cdots c_{k_1,1}c_{k_0,0}:0\leq k_j\leq \floor{(n+j)/2}, k_{j+1}\in\{k_j,k_j+1\}\}.
\end{align*}
\end{lem}
For example, for $0\leq i\leq \floor{n/2}$ we may express $c_{i,0}c_{i,2}\in \projalg_{n,3}$ in this basis as
\begin{align*}
    c_{i,0}c_{i,2}&=c_{i,2}c_{i,0}=\left[(c_{i+1,3}+c_{i,3})c_{i,2}(c_{i,1}+c_{i-1,1})\right]\left[(c_{i+1,1}+c_{i,1})c_{i,0}\right]=
(c_{i+1,3}+c_{i,3})c_{i,2}c_{i,1}c_{i,0}
\\&=c_{i+1,3}c_{i,2}c_{i,1}c_{i,0}+c_{i,3}c_{i,2}c_{i,1}c_{i,0},
\end{align*}
where in the second equality we applied the relations \eqref{eq:ProjAdjRel}.
\begin{defn}\label{defn:TLs+n}
    Let $\TL^+_s(n)$ be a $\mathbb{C}(q)$-algebra with generators $\ckme_j$ and $c_{k,i}$ for $1\leq j\leq s-1$,  $0\leq k\leq \floor{(n+j)/2}$, and $0\leq i\leq s$. Each respective set of generators is subject to relations such that they generate $\TL_s$ and $\projalg_{n,s}$ as subalgebras. The other defining relations are:
\begin{align}
    \ckme_jc_{k,i} 
    &= 
    c_{k,i}\ckme_j,  
    &\mbox{for $i\neq j$}
    \\
    \label{eq:CthroughEAlgebraic}
    c_{k,j-1}\ckme_j
    &=
    c_{k+1,j+1}c_{k,j-1}\ckme_j
    =
    c_{k+1,j+1}\ckme_j, 
\end{align}	
\vspace{-1\baselineskip}
\begin{align}\label{eq:BigRelEC}
    \ckme_i c_{k,i}
    &=
    c_{k+1,i+1}c_{k+1,i}c_{k,i-1}\left(\ckme_i-\frac{[n-2k]}{[n-2k+1]}
    \right)
    +
    c_{k,i+1}c_{k-1,i}c_{k-1,i-1}\left(\ckme_i- \frac{[n-2k+4]}{[n-2k+3]}\right)
    \\&
    \phantom{=}+ 
    c_{k+1,i+1}c_{k,i}c_{k,i-1} \frac{[n-2k+2]}{[n-2k+1]}
    + 
    c_{k,i+1}c_{k,i}c_{k-1,i-1}
    \frac{[n-2k+2]}{[n-2k+3]}
    \notag.
\end{align}
\end{defn}
The relation 
\begin{align}
    \ckme_ic_{k,i}\ckme_i
    &=
    \left(
    c_{k-1,i-1}
    \frac{[n-2k+2]}{[n-2k+3]}
    +
    c_{k,i-1}
    \frac{[n-2k+2]}{[n-2k+1]}
    \right)
    \ckme_i
\end{align}
is readily verified using (\ref{eq:BigRelEC}),
$c_{k+1,i+1}(c_{k+1,i}+c_{k,i})c_{k,i-1}=c_{k+1,i+1}c_{k,i-1}$, 
and $(\ckme_i)^2=[2]\ckme_i$.

The Jones basis of $\TL_s$ for nonidentity elements consists of words
\begin{align}\label{eq:JonesBasis}
    \ckme_{\boldsymbol{jk}}=(\ckme_{j_1}\ckme_{j_1+1}\cdots \ckme_{k_1})(\ckme_{j_2}\ckme_{j_2+1}\cdots \ckme_{k_2})
\cdots
(\ckme_{j_r}\ckme_{j_r+1}\cdots \ckme_{k_r})
\end{align}
where $s>j_1>j_2>\cdots>j_r>0$ and $s>k_1>k_2>\cdots>k_r>0$ \cite[Prop. 2.3]{Ridout14}. We write the identity element as $\ckme_{00}$.
    
\begin{prop}\label{prop:TLBasis}
There is a basis of $\TL_s^+(n)$ consisting of products $c_{\boldsymbol{k}}e'_{\boldsymbol{ij}}$ where $c_{\boldsymbol{k}}$ is a basis vector of $\projalg_{n,s}$ as in Lemma \ref{lem:CBasis} and $e'_{\boldsymbol{ij}}$ is a basis vector of $\TL_s$ as in \eqref{eq:JonesBasis} with the additional constraints:
\begin{itemize}
    \item if $\ckme_\ell\ckme_{\boldsymbol{ij}}=[2]\ckme_{\boldsymbol{ij}}$, then  $\boldsymbol{k}_{\ell+1}=\boldsymbol{k}_{\ell-1}+1$\,,
    \item if $n+j-2i=0$ and $c_{i+1, j+2}c_{i,j+1}c_{i,j}c_{\boldsymbol{k}}=c_{\boldsymbol{k}}$, then $e'_{j+1}e'_{\boldsymbol{ij}}\neq [2]e'_{\boldsymbol{ij}}$.
\end{itemize}
\end{prop}
\begin{proof}
The defining relations in $\TL_s^+$ give a rule to present any element of $\TL_s^+$ in the desired form. The additional constraints are due to \eqref{eq:CthroughEAlgebraic} and the exceptional form of \eqref{eq:BigRelEC}, see also Remark \ref{rem:Exceptional}. It remains to show that any ambiguities in applying these rules can be resolved in the sense of Bergman's diamond lemma \cite{Bergman}. The ambiguities arising from relations which relate one monomial to another are easily resolved. Ambiguities for the relation involving $\ckme_ic_{k,i}$, namely the confluence conditions for $(\ckme_i)^2c_{k,i}$, $\ckme_i\ckme_{i\pm1}\ckme_ic_{k,i}$, $\ckme_ic_{k,i}^2$, and $\ckme_ic_{k,i}(c_{k,i\pm1}+c_{k\pm1,i\pm1})$ are resolved by short computations.
\end{proof}

Let $Q_{+,n}^{\otimes s}$ denote the object $Q_+^{\otimes s}\in \Hom_{\Wact}(n,n+s)$. Write  $\End_{\Wact}(Q_{+,n}^{\otimes s})$ for the algebra of endomorphisms of this object. The assignment of diagrams to elements of $\TL_s^+(n)$ naturally determines an algebra map $\psi_{s,n}:\TL_s^+(n)\to \End_{\Wact}(Q_{+,n}^{\otimes s})$. Recall that strands are labeled so that strand $1$ is on the right and strand $s$ is on the left. For generators indexed by $0<i<s$, $\psi_{s,n}$ makes the following assignments:
\begin{align}
    e_i&\mapsto\mbox{disoriented cupcap over positions $i$ and $i+1$,}
    \\
    c_{k,i} &\mapsto \circled{k} \mbox{ on (or immediately to the left of) the top of strand $i$,}
    \\
    c_{k,0} &\mapsto \circled{k} \mbox{ right of strand $1$.}
\end{align}
Multiplying on the left in $\TL_s^+(n)$ corresponds to stacking upwards in diagrams under $\psi_{s,n}$. 
The existence of this map can be verified by comparing the defining relations of $\TL_s^+(n)$ to the relations on $\End_{\Wact}(Q_{+,n}^{\otimes s})\subset\Hom_{\Wact}(n,n+s)$. 
 			
\begin{prop}
The map $\psi_{s,n}$ is a surjection.
\end{prop}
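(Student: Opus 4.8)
The plan is to adapt, almost verbatim, the diagram-reduction argument already used to prove that $\psi_0$ is surjective, the only genuinely new feature being the presence of $s$ inert vertical strands. Since $\uparrow^{\otimes s}$ is an object of $\Weyl'$, every element of $\End_{\Weyl}(\uparrow^{\otimes s})$ is a $\bC[q,q^{-1}]$-linear combination of diagrams $D$ whose boundary consists of the $s$ upward endpoints on the bottom and the $s$ upward endpoints on the top, so it suffices to show that each such $D$ can be rewritten, using only the relations of $\Weyl$, as a $\bC[q,q^{-1}]$-linear combination of \emph{normal-form} diagrams: a Temperley--Lieb diagram on the $s$ strands (a product of disoriented cup-caps) decorated by bubbles $\circled{k}$ on strands and boxes $\boxed{k}$ immediately to the left of strands. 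Every normal-form diagram is $\psi_s$ of a monomial $e\,c\,b$ in the $e_i$, the $c_{k,i}$ and the $\boxy{k,i}$ (using that $\psi_s$ is a well-defined algebra map), and by the module isomorphism $\projalg_s\otimes\boxalg_s\otimes\TL_s\xrightarrow{\iso}\TL^+_s$ of the preceding proposition these exhaust a spanning set of $\TL^+_s$; hence the reduction yields surjectivity.

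The reduction would proceed in four steps. (i) Put $D$ in generic position and, using \eqref{eq:BubbleBoxRelations}, slide every box rightward across the up-strands into the unbounded region; the label shifts are exactly those encoded by $\boxy{k,i}=\boxy{k+1,i-1}$, so nothing is lost. (ii) Straighten all solid zigzags with the duality relations \eqref{eq:duality} and their vertical reflections; afterward the through-strands are monotone and the remaining solid part is a disjoint union of closed curves -- possibly carrying trivalent vertices and enclosing doubled curves, bubbles, boxes, and pieces of through-strands -- together with disoriented cup-caps. (iii) Resolve the solid closed curves exactly as in the surjectivity proof for $\psi_0$: process an innermost one by killing its interior doubled circles via Lemma \ref{lem:DoubledCircles}, canceling cup/cap vertices in pairs via \eqref{eq:AlternatingCupCaps}, stripping its doubled edges via \eqref{eq:TraceE} and Lemma \ref{lem:PThroughE}, resolving any bubbles lying on it via \eqref{eq:downKup}, and finally evaluating the bare circle via \eqref{eq:CWCircles}--\eqref{eq:CCWCircles}; closed doubled curves produced along the way are removed by \eqref{eq:DoubledRelations} and Lemma \ref{lem:DoubledCircles}. (iv) Move each surviving bubble into standard position on a strand using the definitions \eqref{eq:DefZeroBubble}, \eqref{eq:BubbleDefs} together with \eqref{eq:PThroughUp} and \eqref{eq:MoveBubbles}, sliding out any boxes that appear as in step (i). The diagrams that survive are then manifestly $\psi_s$-images, since the defining relations of $\TL^+_s$ among the $e_i$, the $c_{k,i}$ and the $\boxy{k,i}$ are precisely the relations of $\Weyl$ that involve no downward strand.

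I expect the main obstacle to be \emph{termination} of steps (iii) and (iv). For (iii), the key is the complexity argument from the $\psi_0$ proof: the number of solid closed curves in $D$ is a well-founded measure, and one must check that processing an innermost solid curve (now in the background of $s$ through-strands and some disoriented cup-caps) strictly decreases it -- this works because every relation invoked is local, and boxes or bubbles can be carried past a through-strand by \eqref{eq:BubbleBoxRelations} and \eqref{eq:PThroughUp} without creating new solid curves. For (iv), one should fix a target strand for each bubble and observe that each application of \eqref{eq:MoveBubbles} either moves a bubble strictly toward its target or shifts its label toward $0$ (bubbles of negative label vanishing by \eqref{eq:BoxInverse}); this is exactly the rewriting that underlies the diamond-lemma proof of the isomorphism $\projalg_s\otimes\boxalg_s\otimes\TL_s\xrightarrow{\iso}\TL^+_s$, so its confluence and termination can be borrowed rather than re-proved. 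A minor point to dispatch cleanly is that the straightening in step (ii) really does eliminate every solid turnback not contained in a closed loop; in a planar, crossingless calculus this is the standard consequence of $Q_+$ and $Q_-$ being biadjoint, i.e.\ of the four relations \eqref{eq:duality} (cf. Remark \ref{rem:DoubledObjects}).
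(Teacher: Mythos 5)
Your argument is essentially the paper's: the paper's proof simply chooses a standard representative of each diagram in $\End_{\Weyl}(\uparrow^{\otimes s})$ --- a $\TL_s$ diagram in Jones basis form with bubbles at the tops of strands and a product of boxes (and possibly a bubble) in the rightmost region --- and observes that such a diagram is visibly the image of an element of $\TL_s^+$ under $\psi_s$. Your four-step reduction, reusing the relations and the curve-elimination scheme from the surjectivity proof for $\psi_0$, is just a more explicit rendering of why that standard representative exists, so the two proofs coincide in approach.
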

\begin{proof} 
Any diagram in $\End_{\Wact}(Q_{+,n}^{\otimes s})$ can be reduced to a diagram in a standard form. Any closed subgraph in a diagram may be simplified to a linear combination of bubbles by Proposition \ref{prop:projalgiso}. A bubble that appears directly below a disoriented cupcap may be moved above it and have the cupcap resolve according to \eqref{eq:MoveBubbles}. The result is a linear combination of Temperley-Lieb diagrams with at most one bubble in each region across the top of the complement of the diagram (including its sides). If there are no bubbles in these regions, we introduce a sum of bubbles into the rightmost region (any region is fine) using \eqref{eq:SumBubbles}. We then use \eqref{eq:PThroughUp} to guarantee that each region across the top of the complement of the diagrams has a bubble. It is now easy to determine an element of $\TL_s^+(n)$ which maps to this linear combination of diagrams.
\end{proof}
 	
\begin{conj}
    The map $\psi_{s,n}$ is an injection.
\end{conj}

Proving that $\psi_{s,n}$ is injective is equivalent to showing that its kernel is generated by the relations of $\TL_s^+(n)$ given in Definition \ref{defn:TLs+n}. This proof requires studying the action of $\TL_s$ on $Q_+^{\otimes s}$, which we discuss briefly in the next section.  
 	
\subsection{Action of $\TL_s$ on $Q_{+,n}^{\otimes s}$}\label{sec:ends}

It follows from the first relation in \eqref{eq:AlternatingCupCaps} that the identification of idempotent Temperley-Lieb generators $e_i$ with the disoriented cupcap over strands $i$ and $i+1$ (numbered right to left) defines a homomorphism $\TL_s\to\End_{\uWeyl}(Q_+^{\otimes s})$. Similarly, by rotating diagrams there is a homomorphism $\TL_s\to\End_{\uWeyl}(Q_-^{\otimes s})$. 
 			
 	Now each idempotent in $\TL_s$ defines an object in the idempotent complete category $\uWeyl$. These include the projections onto irreducible representations discussed in Section \ref{ssec:RepTLn}, among which the Jones-Wenzl idempotent $f^{(s)}$ is a special case. The simplest of these idempotents and their complements are $e_1,1-e_1\in\TL_2$. 
 	
 	The action of $Q_{+,n}^{\otimes2}\kimage[k]$ on a $\TLn$-module is to project onto the $W^n_{n-2k}$ component and induct twice. From Section \ref{ssec:IndRes}, $\Ind^2(W^n_{n-2k})\cong W^{n+2}_{n+2-2k}\oplus 2W^{n+2}_{n-2k}\oplus W^{n+2}_{n-2-2k}$. Set $m=n-2k$. Then $v\in W^n_{m}$ is mapped to the following direct sum of diagrams under the isomorphism:
 	\begin{align}
 		\left(~
 		\tikzc{
 			\vseg{1}{-.5}\vseg{0}{-.5}\vseg{-1}{-.5}\vseg{-2}{-.5}\vseg{-3}{-.5}
 			\vseg{1}{-1.5}\vseg{0}{-1.5}\vseg{-1}{-1.5}\vseg{-2}{-1.5}\vseg{-3}{-1.5}
 			\vseg{1}{-4}\vseg{0}{-4}\vseg{-1}{-4}
 			\vseg{-2}{-4}\vseg{-3}{-4}
 			\drawsquaretext[\phantom{x}$v$\phantom{x}]{0}{0}
 			\drawsquaretext[\phantom{x}$f^{(m+2)}$\phantom{x}]{-1}{-2.5}
 		}~,~
 	\sqrt{\frac{[m]}{[m+1]}}~
 	\tikzc{
 		\vseg{1}{-.5}\vseg{0}{-.5}\vseg{-1}{-.5}\vseg{-3}{-.5}\vseg{-4}{-.5}
 		\vseg{1}{-1.5}\vseg{0}{-1.5}\vseg{-1}{-1}\vseg{-3}{-1}\vseg{-4}{-1.5}
 		\vseg{1}{-3}\vseg{0}{-3}\vseg{-4}{-3}
 		\vseg{1}{-5}\vseg{0}{-5}\vseg{-4}{-5}
 		\drawsquaretext[\phantom{x}$v$\phantom{x}]{0}{0}
 		\drawsquaretext[\phantom{xxx}$f^{(m)}$\phantom{xxx}]{-1.5}{-3.5}
 		\ncup{-3}{-1}
 	}
 	~,~
 		\sqrt{\frac{[m+1]}{[m+2]}}~
 	\tikzc{
 		\vseg{1}{-.5}\vseg{0}{-.5}\vseg{-1}{-.5}\vseg{-2}{-.5}\vseg{-4}{-.5}
 		\vseg{1}{-1.5}\vseg{0}{-1.5}\vseg{-1}{-1.5}
 		\vseg{-2}{-1.5}\vseg{-4}{-1.5}
 		\vseg{1}{-3}\vseg{0}{-3}\vseg{-1}{-3}
 		\vseg{-2}{-4}\vseg{-4}{-3}
 		\vseg{1}{-5}\vseg{0}{-5}\vseg{-1}{-5}
 		\drawsquaretext[\phantom{x}$v$\phantom{x}]{0}{0}
 		\drawsquaretext[$f^{(m+1)}$]{-.5}{-2.5}
 		\ncup{-4}{-4}\vseg{-4}{-4}
 	}
 	~,~
 	\sqrt{\frac{[m]}{[m+2]}}~
 	\tikzc{
 		\vseg{1}{-.5}\vseg{0}{-.5}\vseg{-1}{-.5}\vseg{-3}{-.5}\vseg{-4}{-.5}
 		\vseg{1}{-1.5}\vseg{0}{-1}\vseg{-1}{-1}\vseg{-3}{-1}\vseg{-4}{-1}
 		\vseg{1}{-3}
 		\vseg{1}{-5}
 		\drawsquaretext[\phantom{x}$v$\phantom{x}]{0}{0}
 		\ncup{-3}{-1}
 		\widecup{-4}{-1}{2}
 	}
 		~\right)
 	\end{align}
 	
 	The action of $e_1\in\TL_2$ on $Q_{+,n}^{\otimes 2}\kimage[k]$ determines an endomorphism of $\Ind^2(W^n_m)$ under $\rho$, namely multiply by $e_{n+2}$. Under this action, both the first and last summands vanish as capping a Jones-Wenzl idempotent yields zero and $vf^{(m)}=v\in W^n_m$. This observation is consistent with the relation in Lemma \ref{lem:PThroughE}, which implies a projection onto the $W^{n+2}_{n+2-2(k+1)}=W^{n+2}_{n-2k}$ isotypic component. The images of these vectors are both nonzero multiples of 
 	\begin{align}
 		\tikzc{
 			\vseg{1}{-.5}\vseg{0}{-.5}\vseg{-1}{-.5}
 			\vseg{1}{-1.5}\vseg{0}{-1.5}\vseg{-1}{-1.5}
 			\vseg{1}{-4}\vseg{0}{-4}\vseg{-1}{-4}
 			\drawsquaretext[\phantom{x}$v$\phantom{x}]{0}{0}
 			\drawsquaretext[$f^{(m)}$]{0}{-2.5}
 			\ncup{-4}{1.5}
 		}
 	\end{align}
 			
 	Note that the complement $1-e_1$ of this idempotent endomorphism on $Q_{+,n}^{\otimes 2}$ does not act as a projection onto the complementary summands $W^{n+2}_{n+2-2k}\oplus W^{n+2}_{n-2-2k}$. This is because $1-e_{n+2}$ is nonzero on the $W^{n+2}_{n-2k}$ summands of $\Ind^2(W^n_{n-2k})$. Further investigation of the interaction between the images of idempotents in $\TL_s$ under $\rho$ and the idempotents $\kimage[k]$ requires a more careful treatment.

\begin{rem}
	The affine Temperley-Lieb algebra also acts on $Q_+^{\otimes s}$. In future work, we use this action to compute $\operatorname{Tr}(\uWeyl)$. 
\end{rem}

\section{The Asymptotic Weyl Category and its Grothendieck Ring}\label{sec:kzero}
For each $n,m\geq0$, we defined the morphism categories $\Hom_{\Wact}(n,m)$ of $\Wact$, as a quotient of $\uWeyl$ in which bubbles $\circled{k}=0$ for $2k>n$ and $\boxed{k}=0$ for $n+k+1\leq 0$. The asymptotic Weyl category is a monoidal category which should be interpreted as a $n\to\infty$ version of the categories $\Hom_{\Wact}(n,m)$. In this category, no bubbles are assumed to be zero and all boxes are invertible.

\begin{defn}\label{def:aWeyl}
    The \emph{asymptotic Weyl category} $\aWeyl$ is the quotient of the universal Weyl category $\uWeyl$ by the relations
    \begin{align}
\left(~\tikzc{
		\drawsquare[k]{0}{0}
	}~\right)^{\dagger}=
	\left(~\tikzc{
		\drawsquare[k]{0}{0}
	}~\right)^{-1}\mbox{for all $k\in\bZ$.}
\end{align}
\end{defn}

Given that $\aWeyl$ is a quotient of $\uWeyl$, the isomorphisms given in Section \ref{ssec:WeylIsos} hold. Moreover, the ``Above critical values'' (large $n$) isomorphisms in Table \ref{tab:isos} hold in $\aWeyl$. Indeed, no bubbles or boxes vanish among the morphisms witnessing the isomorphisms on objects given in the table. It is in this sense that we may also consider $\aWeyl$ a $n\to\infty$ version of the categories $\Hom_{\Wact}(n,m)$.

\begin{prop}\label{prop:AIsos}
    The isomorphisms in Section \ref{ssec:WeylIsos} and the large $n$ relations in \ref{sec:2WeylIsos} hold in the full subcategory of $\aWeyl$ generated by $Q_+$, $Q_-$, and $C_k$ for $k\geq0$.
\end{prop}

\subsection{Grothendieck Ring}\label{ssec:K0}
Recall that the Grothendieck group of an additive category $\mathcal{C}$ is abelian group $K_0(\mathcal{C})$ with generators $X$ for $X\in\operatorname{Ob}(\mathcal{C})$ and relations $[Z]=[X]+[Y]$ whenever $Z\cong X\oplus Y$. The Grothendieck group is endowed with the structure of a ring with multiplication defined such that $[X][Y]=[X\otimes Y]$.

\begin{defn}\label{defn:subalg}
	Let $\subalg$ be the algebra generated by $x$, $y$ and $c_k$ for $k\geq0$ with relations
	\begin{align}
		yx&=xy+c_0
		&
		c_kc_\ell&=\delta_{k\ell}c_k\\
		c_kx~&=c_kx(c_k+c_{k-1})&
		xc_k~&=(c_k+c_{k+1})xc_k\label{eq:XC}\\
		c_ky~&=c_ky(c_k+c_{k+1})&
		yc_k~&=(c_k+c_{k-1})yc_k\label{eq:YC}\\
		c_{k+1}xc_kxc_k&=c_{k+1}xc_{k+1}xc_k& c_{k}yc_{k+1}yc_{k+1}&=c_{k}yc_kyc_{k+1}\label{eq:XCX}\\
		c_ky c_kx c_k&=c_k &
		c_{k-1}y c_kx c_{k-1}&=c_{k-1}\\
		c_kx c_ky c_k&=c_k &
		c_{k}x c_{k-1}y c_{k}&=c_{k}\\
		y c_kx~&= c_k+c_{k-1}+c_kxy c_{k-1}+c_{k-1}xy c_{k}
		&
		x c_ky~&= c_k+c_{k+1}+c_kxy c_{k+1}+c_{k+1}xy c_{k}\label{eq:YCX}
		\\
		c_{-1}&=0 &y c_{0} x~&=c_0
	\end{align}
	
\end{defn}

The relations of $\subalg$ are determined from the image of the isomorphisms in Proposition \ref{prop:AIsos} under the $K_0$ functor. We expect these to be a complete set of relations between the classes of these objects in $\aWeyl$, see Conjecture \ref{conj:K0iso}. 

We do not claim that $[Q_+]$, $[Q_-]$, and $[C_k]$ form a complete set of generators for $K_0(\aWeyl)$. For example, the Jones-Wenzl idempotents which act on $Q_+^s$, determine objects in $\aWeyl$ and therefore generators of $K_0(\aWeyl)$, see Section \ref{sec:ends}. However, we do not consider them here. 

\begin{prop}\label{prop:K0surj}
    Let ${K_0(\aWeyl)}'$ be the subalgebra of $K_0(\aWeyl)$ generated by $[Q_+]$, $[Q_-]$, and $[C_k]$. The homomorphism $\subalg\to {K_0(\aWeyl)}'$ determined by $x\mapsto[Q_+]$, $y\mapsto[Q_-]$, $c_k\mapsto[C_k]$ is well-defined. 
\end{prop}

\begin{conj}\label{conj:K0iso}
	The map defined in Proposition \ref{prop:K0surj} is an  isomorphism. 
\end{conj}

\begin{rem}
   We expect that there is a categorical version of ``asymptotic representation" which applies to $\aWeyl$ acting asymptotically on $\T$, the notion is defined here for algebras and vector spaces in Definition \ref{defn:asympt}. Such an action would imply $K_0(\aWeyl)$ acts on $K_0(\T)$. Mapping $\subalg$ to ${K_0(\aWeyl)}'$ and applying the argument of Lemma \ref{lem:SubalgBasis} would prove Conjecture \ref{conj:K0iso}.
\end{rem}

\subsection{Asymptotic Action}\label{ssec:asymp} Assign a degree to each of the generators of $\subalg$, $\deg(x)=\deg(y)=1$ and $\deg(c_k)=0$ for all $k\geq0$. For $a,b\in\subalg$, set $\deg(a\cdot b)=\deg(a)+\deg(b)$. We filter $\subalg$ by vector subspaces according to this degree, defining $\subalg_n=\{a\in \subalg\vert \deg(a)\leq n\}$. 

Recall that $\T$ is the direct sum over $n$ of $\TLn\mathrm{-mod}$. Then $\KT\coloneq K_0(\T)$ is the abelian group with generators the classes of simple modules $\{[W^n_m]:0\leq m\leq n, m\equiv n\operatorname{ mod }2 \}$. By Remark \ref{rem:IsoInAction}, the functors $\Ind$, $\Res$, and $P_k$ determine endomorphisms of $\KT$:
\begin{align}
	[\Ind]\cdot[W^n_m]&=[W^{n+1}_{m+1}]+[W^{n+1}_{m-1}]
	\\
	[\Res]\cdot[W^n_m]&=[W^{n-1}_{m+1}]+[W^{n-1}_{m-1}]
	\\
	[P_k]\cdot[W^n_m]&=\delta_{m,n-2k}[W^n_m]
\end{align}
with the convention that $[W^n_m]=0$ whenever $m<0$, $n<m$, or $m\not\equiv n\operatorname{ mod }2$.

Let $\KT_j$ be the subspace generated by $\{[W^n_m]\vert n\geq m\geq j\}$. This determines a filtration of $\KT$ with $\KT_{j+1}\subset \KT_{j}$ and $\KT_{-1}=\KT_0=\KT$. Now
\begin{align}\label{eq:GradingShift}
	[\Ind]\cdot \KT_j\subset \KT_{j-1}, && [\Res]\cdot \KT_j\subset \KT_{j-1},&& [P_k]\cdot \KT_j\subset \KT_{j}\,.
\end{align}

\begin{rem}\label{rem:nBound}
    For $[W^n_m]\in \KT_j$, $[P_k]\cdot [W^n_m]=\delta_{m,n-2k}[W^n_m]$. If $[P_k]\cdot [W^n_m]$ is nonzero, then $n\geq j+2k$. 
\end{rem}
 
\begin{defn}\label{defn:asympt}
    An \emph{asymptotic representation} of a filtered algebra $i_n:A_n \hookrightarrow A_{n+1}$ acting on a filtered vector space $j_{n+1}:V_{n+1} \hookrightarrow V_{n}$ is a collection of linear maps $f_{n,m}:A_n\otimes V_m\to V_{m-n}$ for $m\geq n\geq 0$ satisfying the following axioms:
    \begin{align}
        \label{eq:algcoherence} f_{n+1,m+1}\circ (i_n\otimes \Id_{V_{m+1}})&=j_{m-n+1}\circ f_{n,m+1}:A_n\otimes V_{m+1}\to V_{m-n}\,,
        \\
        \label{eq:vspcoherence} f_{n,m}\circ (\Id_{A_n}\otimes j_{m+1})&=j_{m-n+1}\circ f_{n,m+1}:A_n\otimes V_{m+1}\to V_{m-n}\,,
        \\
        \label{eq:compaxiom} f_{n+l,m}\circ (\mu_{l,n}\otimes \Id_{V_{m}})&=f_{l,m-n}\circ (\Id_{A_l}\otimes f_{n,m}):A_l\otimes A_n\otimes V_{m}\to V_{m-n-l}\,,
    \end{align}
    where $m\geq n+l$ and  $\mu_{l,n}:A_l\otimes A_n\to A_{n+l}$ is the multiplication map. These axioms are referred to as algebra coherence, vector space coherence, and composition respectively.
\end{defn}

\begin{prop}\label{prop:asymptoticrep}
    The assignment $x\mapsto[\Ind]$, $y\mapsto[\Res]$, $c_k\mapsto[P_k]$ defines an asymptotic representation of $\subalg$ on $\KT$. 
\end{prop}
\begin{proof}
    Formally, the collection of maps $f_{n,m}:\subalg_n\otimes \KT_m\to\KT_{m-n}$ are determined from the axioms by the assignments:
    \begin{align*}
        &f_{0,m}: \subalg_0\otimes \KT_m\to\KT_m & &{\begin{cases}
        c_k\otimes [W^n_m]\mapsto\delta_{m,n-2k}[W^n_{n-2k}]
        \end{cases}}
        \\
        &f_{1,m}: \subalg_1\otimes \KT_m\to\KT_{m-1} & &\begin{cases}
            x\otimes [W^n_m]\mapsto [W^{n+1}_{m+1}] + [W^{n+1}_{m-1}]
            \\
            y\otimes [W^n_m]\mapsto \overline{\delta_{n,m}}[W^{n-1}_{m+1}] + [W^{n-1}_{m-1}]
        \end{cases}
    \end{align*}
    The axioms \eqref{eq:algcoherence} and \eqref{eq:vspcoherence} of Definition \ref{defn:asympt} hold trivially. The composition axiom \eqref{eq:compaxiom} determines $f_{n,m}$ by expressing each element of $\subalg_n$ as a word written in the generators of $\subalg$. It remains to check that two words in $\subalg_n$ which are equivalent under the relations of $\subalg$ have the same action under $f_{n,m}$.
    
    The relations \eqref{eq:XC}-\eqref{eq:YCX} are determined by classes of functors in the ``generic'' isomorphisms of Table \ref{tab:isos}. As noted in Remark \ref{rem:asymp}, these isomorphisms hold in $\Hom_{\Wact}(n,-)$ provided that $n>2k$, where $k$ is an index of a functor $C_k$ appearing in a given isomorphism. In particular, we are concerned with avoiding instances where a ``generic'' isomorphism fails, namely those which are labeled ``none'' in the table. The corresponding words in such relations belong to $\subalg_2$. By Remark \ref{rem:nBound}, if $W^n_m\in\KT_2$ and $[P_k]\cdot [W^n_m]\neq 0$, then $n\geq 2k+2>2k$. Thus, the relations are indeed respected by the action. The remaining relations in $\subalg$ are considered generic and are easily verified to hold under the asymptotic action. 
\end{proof}

\subsection{Structure of $\subalg$} Here we discuss additional relations and bases for certain subalgebras of $\subalg$. For later use, observe that there is an anti-automorphism $\phi:\subalg\to\subalg$ which exchanges $x$ and $y$. 

\begin{lem}\label{lem:xBinProd}
	Let $0\leq l_1\leq l_2\leq l_3$ and $n,m\geq0$. Then 
	\begin{align*}
		c_{l_3}x^nc_{l_2}x^mc_{l_1}=
		\dfrac{{n\choose l_3-l_2}{m\choose l_2-l_1}}{{n+m\choose l_3-l_1}}c_{l_3}x^{n+m}c_{l_1}\,.
	\end{align*}
\end{lem}
\begin{proof}
	Let $k_0,k_1,\dots, k_{n+m}$ be any sequence such that $k_{i+1}\in\{k_i, k_i+1\}$, $k_0=l_1$, $k_m=l_2$, and $k_{n+m}=l_3$. It follows from both \eqref{eq:XC} and \eqref{eq:XCX}, that 
	\begin{gather*}
		c_{l_2}x^mc_{l_1}={m\choose l_2-l_1} c_{k_m}\prod_{i=0}^{m-1} xc_{k_i}\,, 
		\quad\quad
		c_{l_3}x^nc_{l_2}={n\choose l_3-l_2} c_{k_{n+m}}\prod_{i=m}^{m+n-1} xc_{k_i}\,,
		\\
		c_{l_3}x^{n+m}c_{l_1}={n+m\choose l_3-l_1} c_{k_{n+m}}\prod_{i=0}^{m+n-1} xc_{k_i}\,.
	\end{gather*}
	Then
	\begin{align*}
		c_{l_3}x^nc_{l_2}x^mc_{l_1}
		&=
		{n\choose l_3-l_2}{m\choose l_2-l_1}
		c_{k_{n+m}}\prod_{i=m}^{m+n-1} xc_{k_i}\cdot
        c_{k_m}\prod_{i=0}^{m-1} xc_{k_i}
        \\&
        =
        {n\choose l_3-l_2}{m\choose l_2-l_1}
        c_{k_{n+m}}\prod_{i=0}^{m+n-1} xc_{k_i}
		=
		\dfrac{{n\choose l_3-l_2}{m\choose l_2-l_1}}{{n+m\choose l_3-l_1}}c_{l_3}x^{n+m}c_{l_1}\,.
	\end{align*}
\end{proof}
 
\begin{rem}
    Lemma \ref{lem:xBinProd} readily implies an integral basis for the subalgebra generated by $\{c_{l_2}x^mc_{l_1}:l_1,l_2,m\geq 0\}$. This basis is spanned by ``divided powers elements'' $x^{(l_2, m, l_1)}\coloneq \dfrac{1}{{m \choose l_1-l_2}} c_{l_2}x^mc_{l_1}$
    with $l_1+m\geq l_2\geq l_1$, which multiply according to the rule $ x^{(l_4, n, l_3)} x^{(l_2, m, l_1)}=\delta_{l_3,l_2} x^{(l_4, n+m, l_1)}$.
\end{rem}

Introduce the notation 
\begin{align*}
	\{x\}^n_l=\sum_{i=1}^n x^ic_{l-i}x^{n-i} && \mbox{and} && [x]^n_l=\sum_{i=1}^n x^ic_{l-1}x^{n-i}\,.
\end{align*}
It is then easy to verify that
\begin{align}\label{eqn:CommxInduct}
	\{x\}^{n+1}_l=x^{n+1}c_{l-{n+1}}+\{x\}^{n}_l\cdot x && \mbox{and} && [x]^{n+1}_l=x^{n+1}c_{l-1}+[x]^n_l\cdot x
\end{align}

\begin{lem}\label{lem:ycxPow}
	For nonnegative integers $l$ and $n$, the following identity holds in $\subalg$:
	\[
	yc_lx^n
	=c_lx^nyc_{l-n}
	+c_{l-1}x^nyc_{l}
	+c_l\{x\}^{n-1}_l
	+c_{l-1}[x]^{n-1}_l\,.
	\]
\end{lem}
\begin{proof}
	We give a proof by induction on $n$. The claim is immediately verified in the case $n=0$. We then compute by the inductive hypothesis
	\begin{align*}
		yc_lx^{n+1}&=\left(c_lx^nyc_{l-n}
		+c_{l-1}x^nyc_{l}
		+c_l\{x\}^{n-1}_l
		+c_{l-1}[x]^{n-1}_l\right)x
		\\
		&=\left(c_lx^nc_{l-n}
		+c_lx^nc_{l-n}xyc_{l-n-1}\right)
		+
		\left(c_{l-1}x^nc_{l-1}
		+c_{l-1}x^nc_{l-1}xyc_{l}\right)
		+c_l\{x\}^{n-1}_l\cdot x
		+c_{l-1}[x]^{n-1}_l\cdot x
		\\
		&=c_lx^{n+1}yc_{l-n-1}
		+c_{l-1}x^{n+1}yc_{l}
		+c_l\{x\}^n_l
		+c_{l-1}[x]^n_l\,.
	\end{align*}
	In the second equality we use that $c_lx^nc_{l-n-1}=c_{l-1}x^nc_{l}=0$. The third equality is due to the relations $c_lx^nc_{l-n}xyc_{l-n-1}=c_lx^{n+1}yc_{l-n-1}$, $c_{l-1}x^nc_{l-1}xyc_{l}=c_{l-1}x^{n+1}yc_{l}$, and those in \eqref{eqn:CommxInduct}. Thus, proving the claim.
\end{proof}

\begin{lem}
	 For each $k\geq l\geq0$, $c_{k}\subalg c_l$ is spanned by 
 \[S_{k,l}=\{c_{k}x^{k-l+m}c_l, c_{k}x^{k-l}y^mc_l\mid m\in\bZ_{\geq 0} \}\,.\]
\end{lem}
\begin{proof}
Fix $k\geq0$. For each word $w\in \subalg$ let $\ell(w)$ be the length of $w$ (not necessarily a reduced word). We give an inductive proof on the length of $w$ and a downward induction on $l$. For a given word $w$, observe that if $k-l>\ell(w)$, then $c_{k+l}wc_k=0$ and if $k-l=\ell(w)$, then $c_{k+l}wc_k$ is nonzero exactly when $w=x^{k-l}$. The base case for a word of length $1$ is immediate.
	
	Assume for any word $w$ of length at most $n$ and any $k-n\geq l\geq 0$ that $c_{k}wc_l\in \operatorname{span}(S_{k,l})$. Fix a word $w$ of length $n+1$. The base case of the downward induction is $l=k-(n+1)$ as described above. Now fix that $c_{k}wc_l\in \operatorname{span}(S_{k,l})$. We now show that $c_{k}wc_{l-1}\in \operatorname{span}(S_{k,l-1})$. We proceed in cases, assuming that $w=w_1w'$ where $w_1\in\{c_{l-1}, x,y\}$ and $\ell(w')=n$.
	
	The case $w_1=c_{l-1}$ is the simplest as $c_{k}wc_{l-1}=c_{k}w'c_{l-1}$ and belongs to $\operatorname{span}(S_{k,l-1})$ by the inductive hypothesis. Note that if $w_1$ is equal to any other $c_s$, then $c_{k}wc_{l-1}=0$.
	
	Next, suppose $w_1=x$. Then
	\begin{align*}
	c_{k}wc_{l-1}=&~c_{k}xw'c_{l-1} =          c_{k}xc_{k}w'c_{l-1}
		+c_{k}xc_{k-1}w'c_{l-1}
		\\
		&\sum_m a_m c_{k}xc_{k}x^{k-l+1+m}c_{l-1}
		+
		\sum_m a'_m c_{k}xc_{k-1}x^{k-l}y^mc_{l-1}
		\\&+
		\sum_m b_m c_{k}xc_{k-1}x^{k-l+m}c_{l-1}
		+
		\sum_m b'_m c_{k}xc_{k}x^{k-l+1}y^mc_{l-1}
	\end{align*}
	for some coefficients, $a_m,a'_m,b_m,b'_m\in\bC(q)$. The resulting (finite) sums may be expressed in the desired basis by first writing $c_{k'}x^{k'-l+1}y^mc_{l-1}=c_{k'}x^{k'-l+1}c_{k'}y^mc_{l-1}$ for the sums involving $y^m$ terms, then applying Lemma \ref{lem:xBinProd} to all four sums. 
	
	Finally, consider $w_1=y$. We have 
	\begin{align*}
		c_{k}wc_{l-1}&~=c_{k}yw'c_{l-1}=c_{k}yc_{k}w'c_{l-1}
		+c_{k}yc_{k+1}w'c_{l-1}
		\\
		&\sum_m a_m c_{k}yc_{k}x^{k-l+1+m}c_{l-1}
		+
		\sum_m a'_m c_{k}yc_{k}x^{k-l+1}y^mc_{l-1}
		\\&+
		\sum_m b_m c_{k}yc_{k+1}x^{k-l+2+m}c_{l-1}
		+
		\sum_m b'_m c_{k}yc_{k+1}x^{k-l+2}y^mc_{l-1}
	\end{align*}
	for some coefficients, $a_m,a'_m,b_m,b'_m\in\bC(q)$. The claim follows from Lemmas \ref{lem:ycxPow} and \ref{lem:xBinProd}.
\end{proof}

\begin{lem}\label{lem:SubalgBasis}
    For each  $k\geq l\geq0$, $S_{k,l}$ is a basis for $c_{k}\subalg c_l$.
\end{lem}
\begin{proof}
	Let $\delta=k-l\geq0$. We prove that $S_{k,l}$ is a linearly independent set. Assume for a contradiction that there is a linear dependence
	\begin{align*}
		d=\sum_m a_m c_{k}x^{\delta+m}c_l + b_m c_{k}x^{\delta}y^mc_l=0\,.
	\end{align*}
	for some coefficients $a_m,b_m\in\bC(q)$. Let $M$ be the largest $m$ such that $a_m$ and $b_m$ are nonzero. Thus, $d\in \subalg_{M+\delta}$ and we may consider its action on $\KT_{M+\delta}$ since $\subalg$ acts asymptotically on $\KT$. Consider $[W^{M+k+l}_{M+\delta}]\in\KT_{M+\delta}$ so that
	\begin{align*}
		c_{k}x^{\delta+m}c_l\cdot[W^{M+k+l}_{M+\delta}]&=
        c_{k}x^{\delta+m}\cdot [W^{M+k+l}_{M+k-l}]={l+m\choose l}[W^{M+2k+m}_{M+m}]
		\\
		c_{k}x^{\delta}y^mc_l\cdot[W^{M+k+l}_{M+\delta}]
        &=c_{k}x^{\delta}c_l y^mc_l\cdot[W^{M+k+l}_{M+k-l}]
        =c_{k}x^{\delta}\cdot [W^{M-m+k+l}_{M-m+k-l}]=[W^{M+2k-m}_{M-m}]
	\end{align*}
	Since each term in $d$ maps $[W^{M+k+l}_{M+\delta}]$ to a unique nonzero vector in $\KT$, the linear dependence must be the trivial one.
\end{proof}

\begin{rem}
	By applying the anti-automorphism $\phi$ which switches $x$ and $y$, there is a basis of $c_k\subalg c_{l}$ for $k\geq l\geq0$ given by $S_{l,k}=\{c_{l}y^{(k-l)+m}c_{k}, c_{l}y^{k-l}x^mc_{k}\mid m\in\bZ_{\geq 0} \}$.
\end{rem}
 
For integers $k,l,m\geq 0$, define
\begin{align*}
    X^m_{k,l}\coloneq
    \begin{cases}
        c_kx^{m+(k-l)}c_l/{m+(k-l)\choose k-l} & k\geq l\\
        c_kx^{m}y^{l-k}c_l & l\geq k
    \end{cases}\,,
    &&
    X^{-m}_{l,k}\coloneq\phi(X^m_{k,l})\,.
\end{align*}
The choice of asymmetric normalization is intentional.

\begin{lem}
	For $a,b\geq 0$ and $n,m\in\bZ$, the multiplication map $c_{k}\subalg c_l \otimes c_l \subalg c_{k}\to c_{k}\subalg c_{k}$ satisfies \[X^n_{k,l}\otimes X^m_{l,k}\mapsto X^{n+m}_{k,k}.\] 
\end{lem}
\begin{proof}
    The proof is a direct computation for choices $k\geq l$ or $l\geq k$, $n\geq 0$ or $n\leq 0$, and $m\geq 0$ or $m\leq 0$. We include one such computation. Assume $k\leq l$ and $m,n\geq 0$. Then
    \begin{align*}
        X^n_{k,l}X^m_{l,k}
        =
        c_k x^n y^{l-k} c_l x^{m+l-k}c_l/{m+l-k\choose l-k}
        =
        c_k x^n c_ky^{l-k} c_l x^{l-k}c_kx^mc_k
        =
    c_k x^n c_kx^mc_k
        = X^{n+m}_{k,k}\,.
    \end{align*}
\end{proof}

\begin{cor}
	For each $k\geq0$, $c_k\subalg c_k$ is isomorphic to a Laurent polynomial algebra in the variable $X_{k,k}$. 
\end{cor}

\appendix
\section{Results from Quinn}\label{sec:app}
We recall key statements and main results from \cite{Quinn} which we use in this paper. Namely, the structure of the representation categories of Temperley-Lieb algebras, the action of induction and restriction functors, and relations in the precursor to the universal Weyl category. The reader is referred there and references therein for additional details. Unless stated otherwise, results given here are proven in \cite{Quinn}. Some notation is changed in this paper. 

\subsection{Representations of Temperley-Lieb Algebras}\label{ssec:RepTLn}
The set of isomorphism classes of irreducible $\TLn$ representations are in bijection with nonnegative integers $m\leq n$ such that $m\equiv n\operatorname{mod}2$. We denote such an irreducible representation $W^n_m$. Vectors in these representations may be presented as $(m,n)$-Temperley-Lieb diagrams $w$ such that $w=wf^{(m)}$. This implies that $W^n_n$ is the trivial representation. We assume that $W^n_m=0$ is $m$ is negative, exceeds $n$, or has different parity from $n$. 

A basis for these representations is given using a path algebra approach. Equivalently, there is a Young tableaux description of basis vectors given in \cite{Moore}, for example. Let $P^n$ denote the set of paths of length $n$ starting from the vertex labeled $1$ on the type $\mathsf{A}_{n+1}$ graph whose vertices are labeled $0,1,2,\dots, n$\,. A path of length $n$ is notated by a sequence $p=(p_1,p_2,\dots, p_n)$ of nonnegative integers such that $p_1=1$ and $p_{i+1}=p_i\pm1$ for $i<n$. A basis for $W^n_m$ is in bijection with $P^n_m$ the set of paths in $P^n$ which end at vertex $m$, i.e, $p_n=m$. For a given path, the corresponding basis element is constructed inductively from a single strand. Suppose that $v_k$ is the $(p_k,k)$-Temperley-Lieb diagram constructed from a path of length $k$ ending at $p_k$. If the path continues to the right (increases), then
\begin{align}
	\tikzc{
		\vseg{1}{-.5}\vseg{0}{-.5}\vseg{-1}{-.5}
		\vseg{1}{-1.5}\vseg{0}{-1.5}\vseg{-1}{-1.5}
		\drawsquaretext[$v_{k+1}$]{0}{0}
	}
	~=~
	\tikzc{
		\vseg{1}{-.5}\vseg{0}{-.5}\vseg{-1}{-.5}
		\vseg{1}{-1.5}\vseg{0}{-1.5}\vseg{-1}{-1.5}
		\vseg{1}{-4}\vseg{0}{-4}\vseg{-1}{-4}
		\drawsquaretext[\phantom{}$v_k$\phantom{}]{.5}{0}
		\drawsquaretext[$f^{(p_k+1)}$]{0}{-2.5}
	}
\end{align}
and if the path continues to the left (decreases), then
\begin{align}
	\tikzc{
		\vseg{1}{-.5}\vseg{0}{-.5}\vseg{-1}{-.5}
		\vseg{1}{-1.5}
		\drawsquaretext[$v_{k+1}$]{0}{0}
	}
	~=\sqrt{\frac{[p_k]}{[p_k+1]}}~
	\tikzc{
		\vseg{1}{-.5}\vseg{0}{-.5}\vseg{-2}{-.5}
		\vseg{1}{-1.5}\vseg{0}{-1.5}\vseg{-2}{-1.5}
		\vseg{1}{-2.5}
		\drawsquaretext[$v_k$]{0.5}{0}
		\ncup{-2}{-1.5}
	}
\end{align}
The normalization factor $\sqrt{\frac{[p_k]}{[p_k+1]}}$ is required for orthonormality in the following sense. If $p\in P^n_m$ corresponds to $v_p\in W^n_m$, then the reflection of $v_p$ over a horizontal axis is the corresponding dual basis vector $\check{v}_p$ in the sense that $\check{v}_p(v_r)=\delta_{p,r}$, where $\delta=1$ if $p=r$ and is zero otherwise. Write $e_{p,r}$ for the matrix element $v_p\check{v}_r$ provided $p,r\in P^n_m$. 

\begin{rem}\label{rem:idemp}
	We observe that $\sum_{p\in P^n_m} e_{p,p}\in \TLn$ is an idempotent corresponding to the projection onto $W^n_m$. Moreover,
	\begin{align*}
		\label{eq:InductProj}
		\sum_{p\in P^n_m} e_{p,p}
		=
		\sum_{r\in P^{n-1}_{m+1}} \frac{[m+1]}{[m+2]}v_re_{n-1}'\check{v}_r
		+
		\sum_{r\in P^{n-1}_{m-1}} v_rf^{(m)}\check{v}_r.&\qedhere
	\end{align*}
\end{rem}

\subsection{Induction and Restriction for  Temperley-Lieb Modules} \label{ssec:IndRes}
For any $\TLn$-module $V$, the induction functor may be defined as the functor
\begin{align}
	\Ind:\TLn\mathrm{-mod}\to\TL_{n+1}\mathrm{-mod}
	&&
	V\mapsto\mod{n+1}{n+1}{n}\otimes V
\end{align}
or informally as adding an additional string to the diagram. Restriction is instead given by the functor
\begin{align}
	\Res:\TLn\mathrm{-mod}\to\TL_{n-1}\mathrm{-mod}
	&&
	V\mapsto\mod{n-1}{n}{n}\otimes V
\end{align} which in some sense forgets the $n$-th string in the diagram. 

Quinn provides a formula for the restriction of (nonzero) simple modules $W^n_m$ in terms of simple modules $\Res(W^n_m)\cong W^{n-1}_{m+1}\oplus W^{n-1}_{m-1}$. The isomorphism is
witnessed by the maps:
\begin{align}
	\Res(W^n_m)\to W^{n-1}_{m+1}\oplus W^{n-1}_{m-1}: &&
	{
		\tikzc{
			\vseg{-1}{-.5}
			\vseg{0}{-.5}
			\vseg{-1}{-1.5}\vseg{0}{-1.5}
			\drawsquaretext[$v$]{-.5}{0}
	}}
	~
	&\mapsto{\left(~
		\tikzc{
			\vseg{1.5}{.5}
			\ncap{-1.5}{1.5}
			\vseg{1.5}{-.5}\vseg{.5}{-.5}\vseg{-1.5}{-.5}
			\vseg{1.5}{-1.5}\vseg{.5}{-1.5}\vseg{-1.5}{-1.5}
			\vseg{1.5}{-4}\vseg{.5}{-4}\vseg{-1.5}{-4}
			\drawsquaretext[$v$]{1}{0}
			\drawsquaretext[$f^{(m+1)}$]{0}{-2.5}
		}~,~
		{\frac{[m]}{[m+1]}}
		\tikzc{
			\vseg{1}{-.5}
			\vseg{0}{-.5}
			\vseg{1}{-1.5}\vseg{0}{-1.5}
			\ncup{-2}{-1.5}
			\ncap{-2}{1.5}
			\vseg{-2}{-1.5}\vseg{-2}{-.5}
			\vseg{1}{.5}\vseg{1}{-2.5}
			\drawsquaretext[$v$]{.5}{0}
		}
		\right)
	}
	\\
	W^{n-1}_{m+1}\oplus W^{n-1}_{m-1}\to\Res(W^n_m):
	&&
	{
		\left(~
		\tikzc{
			\vseg{-1}{-.5}
			\vseg{0}{-.5}
			\vseg{-1}{-1.5}\vseg{0}{-1.5}
			\drawsquaretext[$v$]{-.5}{0}
		}~,~
		\tikzc{
			\vseg{-1}{-.5}
			\vseg{0}{-.5}
			\vseg{-1}{-1.5}\vseg{0}{-1.5}
			\drawsquaretext[$w$]{-.5}{0}
		}~
		\right)
	}
	&
	\mapsto~
	{\tikzc{
			\vseg{1}{-.5}
			\vseg{0}{-.5}
			\vseg{1}{-1.5}\vseg{0}{-1.5}
			\drawsquaretext[$v$]{.5}{0}
			\ncup{-2}{-1.5}\vseg{-2}{-1.5}\vseg{-2}{-.5}
			\vseg{1}{-3}
		}
		~+~
		\tikzc{
			\vseg{-1}{2}
			\vseg{0}{2}\vseg{1}{2}
			\vseg{-1}{-0}\vseg{0}{-0}\vseg{1}{-0}
			\vseg{-1}{4}\vseg{0}{4}\vseg{1}{4}
			\drawsquaretext[$f^{(m)}$]{0}{2}
			\drawsquaretext[$w$]{0.5}{4.5}
		}
	}
\end{align}

Quinn also proves that $\Ind(W^n_m)\cong W^{n+1}_{m+1}\oplus W^{n+1}_{m-1}$. The maps for this isomorphism are not explicitly given, but can be inferred from \cite[Props. 3.3.1, 3.4.1]{Quinn}. Here $x\in\TL_{n+1}$ and $v,w$ are vectors in the indicated simple module:
\begin{align}\label{eq:IndtoW}
	\Ind(W^n_m)\to W^{n+1}_{m+1}\oplus W^{n+1}_{m-1}: &&
	{
		\tikzc{
			\vseg{-1}{1.5}
			\vseg{0}{1.5}\vseg{1}{1.5}
			\vseg{-1}{.5}
			\vseg{0}{.5}\vseg{1}{.5}
			\vseg{-1}{-1.5}\vseg{0}{-1.5}\vseg{1}{-1.5}
			\drawsquaretext[$v$]{.5}{0}
			\drawsquaretext[\phantom{$x$}$x$\phantom{$x$}]{0}{2}
	}}
	~
	&\mapsto{\left(~
		\tikzc{
			\vseg{-1}{1.5}
			\vseg{0}{1.5}\vseg{1}{1.5}
			\vseg{-1}{.5}
			\vseg{0}{.5}\vseg{1}{.5}
			\vseg{-1}{-1.5}\vseg{0}{-1.5}\vseg{1}{-1.5}
			\vseg{-1}{-4}\vseg{0}{-4}\vseg{1}{-4}
			\drawsquaretext[$v$]{.5}{0}
			\drawsquaretext[\phantom{$x$}$x$\phantom{$x$}]{0}{2}
			\drawsquaretext[$f^{(m+1)}$]{0}{-2.5}
		}~,~
		\sqrt{\frac{[m]}{[m+1]}}~
		\tikzc{
			\vseg{1}{1.5}
			\vseg{0}{1.5}\vseg{-2}{1.5}
			\vseg{1}{.5}
			\vseg{0}{.5}
			\vseg{1}{-1.5}\vseg{0}{-1.5}\vseg{-2}{0}
			\vseg{1}{-2.5}
			\drawsquaretext[$v$]{.5}{0}
			\drawsquaretext[\phantom{$xx$}$x$\phantom{$xx$}]{-.5}{2}
			\ncup{-2}{-1.5}\vseg{-2}{-1.5}
		}
		\right)
	}
	\\
	W^{n+1}_{m+1}\oplus W^{n+1}_{m-1}\to\Ind(W^n_m):
	&&
	{
		\left(~
		\tikzc{
			\vseg{-1}{-.5}
			\vseg{0}{-.5}
			\vseg{-1}{-1.5}\vseg{0}{-1.5}
			\drawsquaretext[$v$]{-.5}{0}
		}~,~
		\tikzc{
			\vseg{-1}{-.5}
			\vseg{0}{-.5}
			\vseg{-1}{-1.5}\vseg{0}{-1.5}
			\drawsquaretext[$w$]{-.5}{0}
		}~
		\right)
	}
	&
	\mapsto
	{\tikzc{
			\vseg{-1}{-.5}
			\vseg{0}{-.5}
			\vseg{-1}{-1.5}\vseg{0}{-1.5}
			\drawsquaretext[$v$]{-.5}{0}
		}
		~+~
		\sqrt{\frac{[m]}{[m+1]}}~
		\tikzc{
			\vseg{1}{4}
			\vseg{0}{4}
			\vseg{1}{2}
			\vseg{0}{2}\vseg{-1}{2}\vseg{-3}{2}
			\vseg{-3}{0}	 	
			\vseg{1}{-0}\vseg{0}{-0}\vseg{-1}{-0}
			\drawsquaretext[$f^{(m)}$]{0}{2}
			\drawsquaretext[$w$]{0.5}{4.5}
			\ncap{-3}{4}
		}
	}
\end{align}
That the image of the last homomorphism belongs to $\Ind(W^n_m)$ is a consequence of \cite[Prop. 3.4.1]{Quinn}.

\begin{rem}
	The vectors appearing in the image of \eqref{eq:IndtoW}, together with their dual vectors, agree with the decomposition of $\sum_{p\in P^n_m} e_{p,p}$ given in Remark \ref{rem:idemp}. Our previous observation is therefore an expression of the projection onto $W^n_m$ in terms of projections through components of $\Ind(W^{n-1}_{m+1})$ and $\Ind(W^{n-1}_{m-1})$.
\end{rem}

\subsection{Diagrammatic Calculus}\label{ssec:Qdiagrams} In Chapter 5, Quinn initiates the definition of the Weyl category. An important difference in our notation is that Quinn's crossing is replaced by $[2]$ times the cupcap. It acts by multiplication by $e'$, the pre-idempotent Temperley-Lieb generator, on the two induced strands. Quinn defines an abstract category which acts on $\TLn$-modules in the following way
\begin{align}
	{\tikzc{\rcap{0}{0}}}~&\tleq[n]
	\left[x\otimes y\mapsto xy\right]:
	\mod{n}{n}{n-1}\mod{}{n}{n}
	\to \mod{n}{n}{n},
	\\
	{\tikzc{\rcup{0}{1}}}~&\tleq[n]
	\left[x\mapsto x\right]:
	\mod{n}{n}{n}\to \mod{n}{n+1}{n+1}\mod{}{n+1}{n},
	\\
	{\tikzc{\lcap{0}{0}}}~&\tleq[n]
	\left[x\mapsto \ptr_{n+1}(x) \right]:
	\mod{n}{n+1}{n+1}\mod{}{n+1}{n}\to \mod{n}{n}{n}, 
	\\
	{\tikzc{\lcup{0}{1}}}~&\tleq[n]
	\left[x\mapsto \sum_{\substack{p,r\in P^n\\p_n=r_n}}
	c_r e_{p,r} \otimes e_{r,p}x\right]:
	\mod{n}{n}{n}\to \mod{n}{n}{n-1}\mod{}{n}{n},
	\\
	\tikzc{
		\uucup{0}{4}
		\nncap{0}{0}
	}
	~&
	\tleq[n]
	\left[x\mapsto xe_{n+1}\right]:
	\mod{n+2}{n+2}{n}\to\mod{n+2}{n+2}{n}
\end{align}
where the map $x\mapsto \ptr_{n+1}(x)$ is the right partial trace of $x$ as defined in \eqref{def:ptr}, $e_{p,r}$ and $e_{r,p}$ are the matrix units defined here in Section \ref{ssec:RepTLn}, and 
$c_r=\frac{[r_{n-1}+1]}{[r_n+1]}\frac{1}{|P^n_{r_{n-1}}|}$.

Quinn showed that these generators satisfy the following relations for any $n\geq0$ in the action on $\TLn\mathrm{-mod}$. The proof of \eqref{eq:Qzigzag} is given in \cite[Theorem 4.2.14]{Quinn}. 
\begin{align} \label{eq:Qzigzag}
	{\tikzc{
			\ncap{0}{2}\up[top>]{4}{2}
			\up{0}{0}\ncup{2}{2}
		}
		~\tleq~
		\tikzc{
			\vseg{0}{0}\up[top>]{0}{2}
		}
		~\tleq~
		\tikzc{
			\ncap{2}{2}\up[top>]{0}{2}
			\up{4}{0}\ncup{0}{2}
	}}
	&&
	{\tikzc{
			\ncap{0}{2}\down{4}{2}
			\down{0}{0}\ncup{2}{2}
		}
		~\tleq~
		\tikzc{
			\vseg{0}{0}\down{0}{-2}
		}
		~\tleq~
		\tikzc{
			\ncap{2}{2}\down{0}{2}
			\down{4}{0}\ncup{0}{2}
	}}
\end{align}
\vspace{-\baselineskip}
\begin{align}\label{eq:Qloops}
	{[2]~\tikzc{ 
			\ncap{-2}{6}\up[top>]{2}{6}
			\down[bottom<]{-2}{4}
			\vseg{-2}{2}\cupcap{0}{2}
			\ncup{-2}{2}\up{2}{0}
		}
		~\tleq~
		\tikzc{
			\vseg{0}{-2}\vseg{0}{0}\vseg[top>]{0}{2}
	}}
	&&
	{[2]^2~\tikzc{
			\vseg[top>]{0}{12}\ncap{2}{12}
			\vseg[bottom<]{4}{10}
			\vseg{4}{8}
			\cupcap{0}{8}\ncup{2}{8}
			\vseg{0}{6}
			\vseg[top>]{0}{4}\ncap{2}{4}
			\vseg[bottom<]{4}{2}
			\vseg{4}{0}
			\cupcap{0}{0}\ncup{2}{0}
			\vseg[top>]{0}{-2}
		}
		~\tleq~
		[2]~ \tikzc{
			\vseg[top>]{0}{12}\ncap{2}{12}
			\vseg[bottom<]{4}{10}
			\vseg{4}{8}
			\cupcap{0}{8}\ncup{2}{8}
			\vseg[top>]{0}{6}
	}}
	&&
	{	[2]^2~\tikzc{ 
			\uucup{0}{4}\nncap{0}{0}\uucup{0}{8}\nncap{0}{4}
		}
		~\tleq~
		[2]~\tikzc{ 
			\uucup{0}{4}\nncap{0}{0}
	} }
	\\
	{ 	[2]^2~\tikzc{
			\lefte{0}{2}
			\righte{0}{0}
		}
		~\tleq~
		\tikzc{
			\up{0}{0}\vseg{0}{-2}
			\down{2}{-2}\vseg{2}{0}
	}}
	&&
	{
		[2]^2~\tikzc{
			\lefte{0}{0}
			\righte{0}{2}
		}
		~\tleq~ [2]~ \tikzc{
			\down{-2}{1}\vseg{-2}{5}\vseg{-2}{3}
			\vseg[top>]{0}{6}\ncap{2}{6}
			\vseg[bottom<]{4}{4}
			\vseg{4}{2}
			\cupcap{0}{2}\ncup{2}{2}
			\vseg[top>]{0}{0}
		}
	}
	&&
	{\tikzc{
			\ncap{0}{0}\rcup{0}{0}
		} 
		~\tleq~
		[2]}
\end{align}

 \newcommand{\etalchar}[1]{$^{#1}$}

 \end{document}